\def\langform{ja}
\def\ja{ja}
\newcommand{\lang}[1]{#1}
\newcommand{\kurz}[1]{}
\newcommand{\bmath}{\kurz{\begin{math}}\lang{\begin{displaymath}} }
\newcommand{\emath}{\kurz{\end{math}}\lang{\end{displaymath}} }
\newcolumntype{L}[1]{>{\raggedright\let\newline\\\arraybackslash\hspace{0pt}}m{#1}}
\newcolumntype{C}[1]{>{\centering\let\newline\\\arraybackslash\hspace{0pt}}m{#1}}
\newcolumntype{R}[1]{>{\raggedleft\let\newline\\\arraybackslash\hspace{0pt}}m{#1}}
\pgfqpoint{\LineSpace}{\LineSpace}}%
\pgfqpoint{\LineSpace}{\LineSpace}}%
\pgfqpoint{\LineSpace}{\LineSpace}}%
\pgfqpoint{\LineSpace}{\LineSpace}}%
\newdimen\LineSpace
\newdimen\PointSize
\newdimen\LineWidth
\tikzset{
line space/.code={\LineSpace=#1},
line space=3pt
}
\tikzset{
point size/.code={\PointSize=#1},
point size=.5pt
}
\tikzset{
pattern line width/.code={\LineWidth=#1},
pattern line width=.4pt
}
\newtheoremstyle{theoremstyle}
  {10pt}      
  {5pt}       
  {\itshape}  
  {}          
  {\bfseries} 
  {}         
  {\newline}      
  {}          
\newtheoremstyle{examplestyle}
  {10pt}      
  {5pt}       
  {}          
  {}          
  {\bfseries} 
  {}         
  {\newline}      
  {}          
\theoremstyle{theoremstyle}
\newtheorem{theorem}{Theorem}[section]
\newtheorem*{theorem*}{Theorem}
\newtheorem{lemma}[theorem]{Lemma}
\newtheorem{proposition}[theorem]{Proposition}
\newtheorem*{proposition*}{Proposition}
\newtheorem{corollary}[theorem]{Corollary}
\newtheorem*{corollary*}{Corollary}
\theoremstyle{examplestyle}
\newtheorem{example}[theorem]{Example}
\newtheorem{definition}[theorem]{Definition}
\newtheorem{definition*}{Definition}
\newtheorem{remark}[theorem]{Remark}
\newtheorem{remark*}{Remark}
\newtheorem{convention}[theorem]{Convention}
\newtheorem{algorithm}[theorem]{Algorithm}
\newcommand{\NN }{\mathbb{N}}
\newcommand{\RR }{\mathbb{R}}
\newcommand{\QQ }{\mathbb{Q}}
\newcommand{\ZZ }{\mathbb{Z}}
\newcommand{\ut}{{\underline{t}}}
\newcommand{\ux}{{\underline{x}}}
\newcommand{\Rtx}{{R\llbracket t\rrbracket [x]}}
\newcommand{\Rtxx}{{R\llbracket t\rrbracket [x_h]}}
\newcommand{\suchthat}{\;\ifnum\currentgrouptype=16 \middle\fi|\;}
\newcommand{\bigmid}{\left.\vphantom{\Big\{} \suchthat \vphantom{\Big\}}\right.}
\newcommand{\skipalgorithm}{\ \vspace*{-3ex}}
\DeclareMathOperator{\syz}{syz}
\DeclareMathOperator{\Mon}{Mon}
\DeclareMathOperator{\lm}{LM}
\DeclareMathOperator{\lc}{LC}
\DeclareMathOperator{\lt}{LT}
\DeclareMathOperator{\LM}{LM}
\DeclareMathOperator{\LT}{LT}
\DeclareMathOperator{\tail}{tail}
\DeclareMathOperator{\ecart}{ecart}
\DeclareMathOperator{\HDDwR}{HDDwR}
\DeclareMathOperator{\SHDDwR}{SHDDwR}
\DeclareMathOperator{\pRed}{pRed}
\DeclareMathOperator{\DwR}{DwR}
\DeclareMathOperator{\SDwR}{SDwR}
\DeclareMathOperator{\Div}{Div}
\DeclareMathOperator{\NF}{NF}
\DeclareMathOperator{\lcm}{lcm}
\DeclareMathOperator{\spoly}{spoly}
\begin{document}

   \parindent0cm

   \title[Standard Bases]{Standard Bases in mixed Power Series and Polynomial
     Rings over Rings}
   \author{Thomas Markwig}
   \address{Technische Universit\"at Kaiserslautern\\
     Fachbereich Mathematik\\
     Erwin--Schr\"odinger--Stra\ss e\\
     D --- 67663 Kaiserslautern
     }
   \email{keilen@mathematik.uni-kl.de}
   \urladdr{http://www.mathematik.uni-kl.de/\textasciitilde keilen}

   \author{Thomas Markwig}
   \address{Technische Universit\"at Kaiserslautern\\
     Fachbereich Mathematik\\
     Erwin--Schr\"odinger--Stra\ss e\\
     D --- 67663 Kaiserslautern
     }
   \email{ren@mathematik.uni-kl.de}

   \author{Oliver Wienand}
   \address{Technische Universit\"at Kaiserslautern\\
     Fachbereich Mathematik\\
     Erwin--Schr\"odinger--Stra\ss e\\
     D --- 67663 Kaiserslautern
     }
   \email{wienand@mathematik.uni-kl.de}

   \thanks{The first and the second author were supported by the German Israeli Foundation
     grant no 1174-197.6/2011 and by the German Research Foundation
     (Deutsche Forschungsgemeinschaft (DFG)) trough the Priority
     Programme 1489.}

   \subjclass{Primary 13P10, 13F25, 16W60; Secondary 12J25, 16W60}

   \date{March, 2015}

   \keywords{Standard basis, monomial ordering, division with remainder.}

   \begin{abstract}
     In this paper we study standard bases for submodules of a mixed
     power series and polynomial ring
     $R\llbracket t_1,\ldots,t_m\rrbracket[x_1,\ldots,x_n]^s$ respectively of their
     localization with respect to a $\ut$-local
     monomial ordering for a certain class of noetherian rings
     $R$. The main steps are to prove the existence of
     a division with remainder generalizing and combining the
     division theorems of Grauert--Hironaka and Mora and to
     generalize the Buchberger criterion. Everything else then
     translates naturally. Setting either $m=0$ or $n=0$ we get
     standard bases for polynomial rings respectively for power series
     rings over $R$ as a special case.
   \end{abstract}

   \maketitle

   The paper follows to a large part the lines of \cite{Mar08}, or
   alternatively  \cite{GP02} and \cite{DS07},
   adapting to the situation that the coefficient domain $R$ is no field.
   We generalize the Division Theorem
   of Grauert--Hironaka respectively Mora (the latter in the form
   stated and proved first by Greuel and Pfister, see
   \cite{GGMNPPSS94}, \cite{GP96}; see
   also \cite{Mor82}, \cite{Gra94}). The paper should
   therefore be seen as a unified approach for the existence
   of standard bases in polynomial and power series rings for
   coefficient domains which are not fields. Standard bases of ideals
   in such rings come up naturally when computing Gröbner fans (see
   \cite{MR15a}) and
   tropical varieties  (see \cite{MR15b}) over non-archimedian valued fields, even though
   we consider a wider class of base rings than actually needed for
   this.

   An important point is that if the input data is
   polynomial in both $\ut$ and $\ux$ then we can actually compute the
   standard basis in finite time since a standard basis computed in $R[t_1,\ldots,t_m]_{\langle
     t_1,\ldots,t_m\rangle}[x_1,\ldots,x_n]$ will do.

   Many authors contributed to
   the further development (see e.g.\ \cite{Bec90} for a standard
   basis criterion in the power series ring) and to generalizations of the theory, e.g.\
   to algebraic power series (see e.g.\ \cite{Hir77}, \cite{AMR92}, \cite{ACH05})
   or to differential operators (see e.g.\ \cite{GH05}). This list is
   by no means complete.



   \section{Division with remainder}

   In this section, we construct a division with remainder following
   the first three chapters of \cite{Markwig08}. Please mind the
   assumptions on our ground ring in Convention \ref{con:groundRing} for
   that, which were taken from Definition 1.3.14 in \cite{Wienand11}.

   After a quick introduction of the basic terminology, we begin with
   a division algorithm over the ground ring in the form of
   Algorithm~\ref{alg:coeff}. We then continue with homogeneous division
   with remainder in Algorithm~\ref{alg:HDDwR}, and finally end with a
   weak division with remainder in Algorithm~\ref{alg:DwR}.

   \begin{convention}[The class of base rings]\label{con:groundRing} For this chapter, let $R$
     be a noetherian ring in which \emph{linear equations are solvable} as in
     Definition 1.3.14 of \cite{Wienand11}. The latter means that, given
     any finite tuple of arbitrary length $(c_1,\ldots,c_k)$ with $c_i \in
     R$, we must be able to do the following:
     \begin{enumerate}[leftmargin=*]
     \item decide for $b\in R$ whether $b\in\langle
       c_1,\ldots,c_k\rangle$, and, if yes, find $a_1,\ldots,a_k\in R$ such
       that
       \begin{displaymath}
 b = a_1\cdot c_1+\cdots+a_k\cdot c_k.
\end{displaymath}
     \item find a finite generating set $S\subseteq R^k$ of its
       syzygies as module over $R$,
       \begin{displaymath}
 \syz_R(c_1,\ldots,c_k)=\{(a_1,\ldots,a_k)\in R^k \mid
       a_1\cdot c_1+\ldots+a_k\cdot c_k = 0\} = \langle S \rangle_R.
\end{displaymath}
     \end{enumerate}
     We will use the notion
     $\Rtx:=R\llbracket t_1,\ldots,t_m\rrbracket[x_1,\ldots,x_n]$ to denote
     a mixed power series and polynomial ring over $R$ in several variables
     $t=(t_1,\ldots,t_m)$ and $x=(x_1,\ldots,x_n)$, and  $\Rtx^s$ will
     denote the free module of rank $s$ over $\Rtx$.

     $R$ being noetherian is most notably required for the conditional
     termination of Algorithm~\ref{alg:DwR}, while linear equations being
     solvable is required in the instructions of Algorithm~\ref{alg:coeff}
     and Algorithm~\ref{alg:standardBases}.
   \end{convention}

   \begin{example}\label{ex:groundRing}
     Admissible ground rings satisfying Convention
     \ref{con:groundRing} include the following:
     \begin{itemize}[leftmargin=*]
     \item Obviously any field, assuming we are able to compute inverse elements.
     \item The ring of integers $\ZZ$. The division with remainder in
       $\ZZ$ allows us to solve the ideal membership problem, while
       the least common multiple allows us to compute finite
       generating sets of syzygies, see Theorem 2.2.5 in
       \cite{Wienand11} for the latter.
     \item Also, $\ZZ/m\ZZ$ for an arbitrary $m\in\ZZ$. While it
       generally is neither Euclidean nor factorial like $\ZZ$, many
       problems can nonetheless be solved by tracing them back to the
       integers.
     \item Similarly, any Euclidean ring for which we are able to
       compute its division with remainder, or, more generally, any
       factorial ring for which we can compute the unique
       factorization. Classical examples hereof are the ring of
       Gaussian integers $\ZZ[i]$, the polynomial ring
       $\QQ[y]$, the power series ring $\QQ\llbracket s \rrbracket$ or multivariate
       polynomial rings.
     \item Moreover, thanks to the theory of Gr\"obner bases, any quotient ring of a polynomial ring,
       e.g. the ring of Laurent polynomials $K[y_1^{\pm
         1},\ldots,y_n^{\pm 1}]=K[y_0,\ldots,y_n]/(1-y_0\cdots y_n)$.
     \item And, thanks to the theory of standard bases, any
       localization of a polynomial ring at a prime ideal, as it can
       be traced back to a quotient of a polynomial ring localized at
       a mixed ordering, see \cite{Mora91}.
     \item Also, Dedeking domains. A solution to the ideal membership problem and the computation of syzygies can be found in \cite{KY10}.
     \item Finally, product rings like $\ZZ\times\ZZ$, because any ideal in it is the product of two ideals in $\ZZ$.
     \end{itemize}
   \end{example}

   We now begin with introducing some very basic notions of standard
   basis theory to our ring resp. module, definitions such as
   monomials, monomial orderings and leading monomials.

   \begin{definition}\label{def:basicsRing}
     The \emph{set of monomials} of $\Rtx$ is defined to be
     \begin{displaymath}
       \Mon(t,x):=\{ t^\beta x^\alpha \mid \beta\in\NN^m, \alpha\in\NN^n\}\subseteq\Rtx,
     \end{displaymath}
     and a \emph{monomial ordering} on $\Mon(t,x)$ is an ordering $>$ that is compatible with its natural semigroup structure, i.e.
     \begin{displaymath}
       \forall a,b,q \in\Mon(t,x): \quad a> b \quad \Longrightarrow \quad q\cdot a > q\cdot b.
     \end{displaymath}
     We call a monomial ordering $>$ \emph{$t$-local}, if $1 >t^\beta$ for all $\beta\in\NN^m$.

     Let $>$ be a $t$-local monomial ordering on $\Mon(t,x)$, and let
     $w\in\RR_{<0}^m\times\RR^{n}$ be a weight vector. Then the
     ordering $>_w$ is defined to be:
     \begin{align*}
       t^{\beta}x^\alpha >_w t^{\delta}x^\gamma\cdot \quad
       :\Longleftrightarrow \quad & w\cdot(\beta,\alpha) >
       w\cdot(\delta,\gamma) \text{ or } \\
       & w\cdot(\beta,\alpha) = w\cdot(\delta,\gamma) \text{ and } t^{\beta}x^\alpha > t^{\delta}x^\gamma.
     \end{align*}
     We will refer to orderings of the form $>_w$ as a \emph{weighted ordering} with weight vector $w$ and tiebreaker $>$.
   \end{definition}

   \begin{definition}\label{def:basicsModule}
     The \emph{set of module monomials} of $\Rtx^s$ is defined to be
     \begin{displaymath}
       \Mon^s(t,x):=\{ t^\beta x^\alpha \cdot e_i \mid \beta\in\NN^m, \alpha\in\NN^n, i=1,\ldots,s\} \subseteq \Rtx^s.
     \end{displaymath}
     A \emph{monomial ordering} on $\Mon^s(t,x)$ is an ordering $>$ that is compatible with the natural $\Mon(t,x)$-action on it, i.e.
     \begin{displaymath}
       \forall a,b\in \Mon^s(t,x)\;\;  \forall q\in\Mon(t,x): \quad a > b \quad \Longrightarrow \quad q\cdot a > q\cdot b,
     \end{displaymath}
     and that restricts onto the same monomial ordering on $\Mon(t,x)$ in each component, i.e.
     \begin{displaymath}
       \forall a,b\in\Mon(t,x) \;\;\forall i,j\in\{1,\ldots,s\}: \quad
       a\cdot e_i > b\cdot e_i \quad \Longleftrightarrow \quad a\cdot
       e_j > b\cdot e_j.
     \end{displaymath}
     We call a monomial ordering $>$ \emph{$t$-local}, if $1\cdot e_i>t^\beta\cdot e_i$ for all $\beta\in\NN^m$ and $i=1,\ldots,s$.

     Let $>$ be a $t$-local monomial ordering on $\Mon^s(t,x)$, and
     let $w\in\RR_{<0}^m\times\RR^n\times\RR^s$ be a weight
     vector. Then the ordering $>_w$ is defined to be:
     \begin{align*}
       & t^{\beta}x^\alpha \cdot e_i >_w t^{\delta}x^\gamma\cdot e_j \quad \Longleftrightarrow\\
       & \quad w\cdot(\beta,\alpha,e_i) > w\cdot(\delta,\gamma,e_j) \text{ or }\\
       & \quad w\cdot(\beta,\alpha,e_i) = w\cdot(\delta,\gamma,e_j) \text{ and } t^{\beta}x^\alpha \cdot e_i > t^{\delta}x^\gamma\cdot e_j.
     \end{align*}
     We will refer to orderings of the form $>_w$ as a \emph{weighted ordering} with weight vector $w$ and tiebreaker $>$.

     From now on, we will simply refer to module monomials as monomials.
   \end{definition}

   \begin{definition}
     Given a $t$-local monomial ordering $>$ on $\Mon^s(t,x)$ and an
     element $f=\sum_{\alpha,\beta,i} c_{\alpha,\beta,i} \cdot t^\beta
     x^\alpha\cdot  e_i \in\Rtx^s$,
     we define its \emph{leading monomial}, \emph{leading coefficient}, \emph{leading term} and \emph{tail} to be
     \begin{align*}
       \lm_>(f) &= \max\{ t^\beta x^\alpha \cdot e_i \mid c_{\alpha,\beta,i}\neq 0\}, \\
       \lc_>(f) &= c_{\alpha,\beta,i}, \text{ where } t^{\beta}x^{\alpha}\cdot e_i = \lm_>(f), \\
       \lt_>(f) &= c_{\alpha,\beta,i} \cdot t^{\beta}x^{\alpha}\cdot e_i, \text{ where } t^{\beta}x^{\alpha}\cdot e_i = \lm_>(f), \\
       \tail_>(f) &= f-\lt_>(f).
     \end{align*}
     For a submodule $M\leq \Rtx^s$, we set
     \begin{align*}
       \lm_>(M) &= \langle \lm_>(f)\mid f\in M\rangle_{R[t,x]}\leq R[t,x]^s, \\
       \lt_>(M) &= \langle \lt_>(f)\mid f\in M \rangle_{R[t,x]}\leq R[t,x]^s.
     \end{align*}
     Note that we regard the two modules above as submodules of $R[t,x]^s$, while the original module lies in $\Rtx^s$.
     We refer to $\lt_>(M)$ as the \emph{leading module} of $M$ with respect to $>$.
   \end{definition}

   \begin{example}
     Observe that in general
     \begin{displaymath}
       \LM_>(M)\neq \LT_>(M).
     \end{displaymath}
     Consider the ideal
     \begin{displaymath}
       I:=\langle 1+t^6x+t^4y+t^7x^2+t^5xy+t^8y^2, 2-t\rangle \unlhd \ZZ\llbracket t \rrbracket [x],
     \end{displaymath}
     and let $>_w$ be the weighted ordering with
     weight vector $w=(-1,3,3)$ and any arbitrary tiebreaker. Then by
     weighted degree alone we have
     \begin{displaymath}
       \LT_{>_w}(I)=\langle t^5xy,2\rangle\neq \LM_{>_w}(I)=\langle 1\rangle,
     \end{displaymath}
     since  $\LM_{>_w}(2-t)=1$. In fact, the last equation holds true for any $t$-local monomial
     ordering, while the former varies depending on the ordering. This
     is why the role of leading monomials in the classical standard
     basis theory over fields is played by leading terms over rings.
   \end{example}

   \begin{remark}\label{rem:tLocality}
     Note that the $t$-locality of the monomial ordering $>$ is
     essential for leading monomials and other associated objects to
     exist, as elements of $\Rtx$ resp. $\Rtx^s$ may be unbounded in
     their degrees of $t$.

     However, given a weight vector in $\RR_{<0}^m\times\RR^{n}$
     resp. $\RR_{<0}^m\times\RR^n\times\RR^s$, a weighted monomial
     ordering does not need a $t$-local tiebreaker for leading
     monomials to be well-defined. But for sake of simplicity, we
     nevertheless assume all occuring monomial orderings to be
     $t$-local.
   \end{remark}

   $\Mon(t,x)$ comes equipped with a natural notion of divisibility and least common multiple. For module monomials, we define:

   \begin{definition}\label{def:divisibilityAndLCM}
     For two module monomials $t^\beta x^\alpha\cdot e_i$ and $t^\delta x^\gamma\cdot e_j\in\Mon(t,x)^s$, we say
     \begin{displaymath}
       t^\beta x^\alpha\cdot e_i \text{ divides } t^\delta
       x^\gamma\cdot e_j \quad :\Longleftrightarrow \quad e_i=e_j
       \text{ and } t^\beta x^\alpha \text{ divides } t^\delta
       x^\gamma,
     \end{displaymath}
     and in this case we set
     \begin{displaymath}
       \frac{t^\beta x^\alpha\cdot e_i}{t^\delta x^\gamma\cdot e_j}:=\frac{t^\beta x^\alpha}{t^\delta x^\gamma}=t^{\beta-\delta}x^{\alpha-\gamma}\in\Mon(t,x).
     \end{displaymath}
     We define the \emph{least common multiple} of two module
     monomials $t^\beta x^\alpha\cdot e_i$ and $t^\delta x^\gamma\cdot
     e_j\in\Mon(t,x)^s$ to be
     \begin{displaymath}
       \lcm(t^\beta x^\alpha\cdot e_i,t^\delta x^\gamma\cdot e_j):=
       \begin{cases}
         \lcm(t^\beta x^\alpha,t^\delta x^\gamma)\cdot e_j, & \text{if } i=j, \\
         0,                                                 & \text{otherwise}.
       \end{cases}
     \end{displaymath}
   \end{definition}

   We now devote the remaining section to proving the existence of a division with remainder, starting with its definition.

   \begin{definition}\label{def:dwr}
     Let $>$ be a $t$-local monomial ordering on $\Mon^s(t,x)$.
     Given $f\in\Rtx^s$ and $g_1,\ldots,g_k\in\Rtx^s$ we say that a representation
     \begin{displaymath}
       f=q_1\cdot g_1+\ldots+q_k\cdot g_k+r
     \end{displaymath}
     with $q_1,\ldots,q_k\in\Rtx$ and $r=\sum_{j=1}^s r_j\cdot e_j\in\Rtx^s$ satisfies
     \begin{description}
     \item[\rm (ID1)] if $\lm_>(f)\geq\lm_>(q_i\cdot g_i)$ for all $i=1,\ldots,k$,
     \item[\rm (ID2)] if $\lt_>(r)\notin \langle \lt_>(g_1),\ldots,\lt_>(g_k)\rangle$, unless $r=0$,
     \item[\rm (DD1)] if no term of $q_i\cdot \lt_>(g_i)$ lies in
       $\langle \lt_>(g_j) \mid j<i \rangle$ for all $i=1,\ldots,k$,
     \item[\rm (DD2)] if no term of $r$ lies in $\langle \lt_>(g_1),\ldots,\lt_>(g_k) \rangle$,
     \item[\rm (SID2)] if $\lt_>(r_j\cdot e_j)\notin\langle \lt_>(g_1),\ldots,\lt_>(g_k) \rangle$, unless $r_j=0$, for all $j=1,\ldots,s$.
     \end{description}
     A representation satisfying (ID1) and (ID2) is called an
     \emph{(indeterminate) division with remainder}, and a
     representation satisfying (DD1) and (DD2) is called a
     \emph{determinate division with remainder}. In each of these two cases
     we call $r$ a \emph{remainder} or \emph{normal form} of $f$ with
     respect to $(g_1,\ldots,g_k)$. Moreover, if the remainder $r$ is
     zero, we call the representation a \emph{standard representation}
     of $f$ with respect to $(g_1,\ldots,g_k)$.

     A division with remainder of $u\cdot f$ for some $u\in \Rtx$ with
     $\lt_>(u)=1$ is also called a \emph{weak division with remainder}
     of $f$. A remainder of $u\cdot f$ will be called a \emph{weak
       normal form} of $f$ with respect to $(g_1,\ldots,g_k)$, and a
     standard representation of $u\cdot f$ will be called a \emph{weak
       standard representation} of $f$.
   \end{definition}

   \begin{proposition}\label{prop:dwrs}
     Consider a representation
     \begin{displaymath}
       f=q_1\cdot g_1+\ldots+q_k\cdot g_k+r \quad \text{or} \quad u\cdot f=q_1\cdot g_1+\ldots+q_k\cdot g_k+r
     \end{displaymath}
     with $f,g_1,\ldots,g_k,r\in\Rtx^s$, $q_1,\ldots,q_k\in\Rtx$ and $\lt_>(u)=1$. Then:
     \begin{enumerate}[leftmargin=*]
     \item if the representation satisfies {\rm (DD2)}, then it also satisfies {\rm (SID2)},
     \item if the representation satisfies {\rm (SID2)}, then it also satisfies {\rm (ID2)},
     \item if it satisfies both {\rm (DD1)} and {\rm (ID2)}, then it also satisfies {\rm (ID1)}.
     \end{enumerate}
     In particular, {\rm (DD1)} and {\rm (DD2)} imply {\rm (ID1)} and {\rm (ID2)}.
   \end{proposition}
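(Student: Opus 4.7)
The plan is to prove the three numbered implications separately and then observe that the final sentence follows formally: (DD2) $\Rightarrow$ (SID2) $\Rightarrow$ (ID2) by parts (1) and (2), and (DD1) together with (ID2) $\Rightarrow$ (ID1) by part (3).

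For part (1), I would use that the canonical decomposition $r = \sum_{j=1}^s r_j \cdot e_j$ partitions the terms of $r$ by the basis element $e_j$ in which they sit, so every term of $r_j \cdot e_j$ is itself a term of $r$. In particular, when $r_j \neq 0$, the term $\lt_>(r_j \cdot e_j)$ is a term of $r$, and (DD2) forces it outside $\langle \lt_>(g_1), \ldots, \lt_>(g_k) \rangle$. For part (2), the $t$-local ordering on $\Mon^s(t,x)$ restricts to the same monomial ordering on each component, so $\lm_>(r)$ sits in a unique component $e_{j_0}$; hence $r_{j_0} \neq 0$ and $\lt_>(r) = \lt_>(r_{j_0} \cdot e_{j_0})$. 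Applying (SID2) at index $j_0$ then yields (ID2); the case $r = 0$ is vacuous.

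For part (3), the substantive part, I would argue by contradiction. Assume (DD1) and (ID2), and suppose (ID1) fails. Let $M := \max_i \lm_>(q_i \cdot g_i)$ (the maximum taken over $i$ with $q_i g_i \neq 0$) and $i^* := \max\{i : \lm_>(q_i g_i) = M\}$, so $M > \lm_>(f)$. Comparing the coefficient of $M$ on both sides of $f = \sum_i q_i g_i + r$ — which is zero on the left — the $M$-contributions from the $q_i g_i$ and from $r$ must cancel. The case $\lm_>(r) > M$ is ruled out at once, since $\lm_>(r)$ would then exceed every $\lm_>(q_i g_i)$, survive in $f$, and contradict $\lm_>(f) < M < \lm_>(r)$. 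In the remaining case $\lm_>(r) \leq M$, only the indices $i \in I := \{i : \lm_>(q_i g_i) = M\}$ and, if $\lm_>(r) = M$, the term $\lt_>(r)$, contribute at $M$. By maximality of $i^*$, only indices $\leq i^*$ contribute on the $q_i g_i$ side; by (DD1), the piece of $q_{i^*} \cdot \lt_>(g_{i^*})$ landing at $M$ is a term outside $\langle \lt_>(g_j) : j < i^* \rangle$, while the $M$-contributions from the $q_j g_j$ with $j < i^*$ lie in that ideal, and by (ID2) the term $\lt_>(r)$ (when present) lies outside $\langle \lt_>(g_1), \ldots, \lt_>(g_k) \rangle$. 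These ideal-theoretic constraints are incompatible with cancellation at $M$.

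The main obstacle is that over a ring $R$ with zero divisors the naive identity $\lt_>(q_i \cdot g_i) = \lt_>(q_i) \cdot \lt_>(g_i)$ may fail, since $\lc_>(q_i) \cdot \lc_>(g_i)$ can vanish. Consequently, in part (3) one must carefully decompose the $M$-coefficient of each $q_i g_i$ into the piece coming from $q_i \cdot \lt_>(g_i)$, which (DD1) controls term by term, and the piece coming from $q_i \cdot \tail_>(g_i)$, whose nonzero contributions $\mu \cdot n$ at $M$ have $n < \lm_>(g_i)$ and therefore force the accompanying monomial $\mu$ of $q_i$ to be strictly larger than $M/\lm_>(g_i)$; keeping these two pieces separate and feeding them through the cancellation equation at $M$ is the technical heart of the argument.
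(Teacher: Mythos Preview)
Your treatment of (1) and (2) is correct and more explicit than the paper's one-word ``obvious''. For (3) your strategy---examine the maximal monomial $M$ among the $q_ig_i$ and $r$ and derive a contradiction from the cancellation at $M$---is exactly the paper's. You are also right that the zero-divisor case is the crux: the paper's argument silently uses that $\lt_>(q_{i}g_{i})$ is a term of $q_{i}\cdot\lt_>(g_{i})$ and lies in $\langle\lt_>(g_{i})\rangle$, both of which can fail when $\lc_>(q_{i})\cdot\lc_>(g_{i})=0$.

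The gap in your proposal is that the head/tail decomposition you outline cannot be completed to a proof, because part (3) is \emph{false} in the generality of Convention~\ref{con:groundRing}. Over $R=\ZZ/6\ZZ$ with $m=0$, $n=s=1$ and the global ordering $x>1$, take
\[
g_1=2x+1,\qquad g_2=3x,\qquad q_1=3x,\qquad q_2=1,\qquad r=1.
\]
Then $q_1g_1=3x$, $q_2g_2=3x$, and $f:=q_1g_1+q_2g_2+r=1$. Condition (DD1) holds: $q_1\cdot\lt_>(g_1)=6x^2=0$ has no terms, and the single term $3x$ of $q_2\cdot\lt_>(g_2)$ is not in $\langle 2x\rangle$; (DD2) and (ID2) hold since $1\notin\langle 2x,3x\rangle$. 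Yet $\lm_>(q_1g_1)=x>1=\lm_>(f)$, so (ID1) fails. In your language, the tail contribution at $M=x$ forces a monomial $\mu=x$ of $q_1$ with $\mu\cdot\lm_>(g_1)=x^2>M$, but this yields no contradiction because the coefficient $3\cdot 2$ of that monomial in $q_1\cdot\lt_>(g_1)$ vanishes, so neither (DD1) nor maximality of $M$ bites.

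Over an integral domain both your sketch and the paper's proof go through unchanged, since then $\lt_>(q_ig_i)=\lt_>(q_i)\cdot\lt_>(g_i)$ whenever $q_ig_i\neq 0$. In the paper's applications the relevant representations come from Algorithm~\ref{alg:HDDwR}, whose construction gives (ID1) directly (each $q_{i,\nu}g_i$ has all monomials bounded by $\lm_>(f_\nu)\leq\lm_>(f)$), so the downstream results are unaffected.
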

   \begin{proof}
      \lang{(1) and (2) are obvious, so suppose the representation satisfies both (DD1) and (DD2).

      Take the maximal monomial $t^\beta x^\alpha$ occurring in any of
      the expressions $q_i\cdot g_i$ or $r$ on the right hand side, and
      assume $t^\beta x^\alpha>\lm_>(f)$. Because of maximality, it has
      to be the leading monomial of each expression it occurs in. And
      because it does not occur on the left hand side, the leading
      terms have to cancel each other out. Let $q_{i_1}\cdot
      g_{i_1},\ldots,q_{i_l}\cdot g_{i_l}$ be the $q_i\cdot g_i$
      containing $t^\beta x^\alpha$ with $i_1<\ldots<i_l$.

      If $r$ contains $t^\beta x^\alpha$, then $\sum_{j=1}^l\lt_>(q_{i_j}\cdot g_{i_j}) + \lt_>(r)= 0$, and hence
      \begin{displaymath}
        \lt_>(r)=t^\beta x^\alpha\in \langle \lt_>(g_1),\ldots,\lt_>(g_k)\rangle,
      \end{displaymath}
      contradicting (ID2).

      If $r$ does not contain $a$, then we have $\sum_{j=1}^l\lt_>(q_{i_j}\cdot g_{i_j}) = 0$, thus
      \begin{displaymath}
        \lt_>(q_{i_l}\cdot g_{i_l}) \in \langle \lt_>(g_j) \mid j<i_l \rangle,
      \end{displaymath}
      contradicting (DD1).}
    \kurz{The proof is straight forward.}
   \end{proof}

   Next, we pay a little attention to our ground ring. Convention
   \ref{con:groundRing} states that our ring already comes equipped
   with everything we need to compute representations of members in
   given ideals, but we still need to make sure that these
   representations satisfy our needs in Algorithm~\ref{alg:HDDwR}.

   \begin{algorithm}[$\Div_R$, division in the ground ring]\label{alg:coeff}\skipalgorithm
     \begin{algorithmic}[1]
       \REQUIRE{$(b,C)$, where $C=(c_1,\ldots,c_k)\in R^k$ and $b \in \langle C \rangle$.}
       \ENSURE{$(a_1,\ldots,a_k)\subseteq R^k$, such that \begin{displaymath}
           b=a_1\cdot c_1 + \ldots + a_k\cdot c_k
         \end{displaymath}
         with $a_i\cdot c_i\notin\langle c_j\mid j<i \rangle$ unless $a_i\cdot c_i =0$, for any $i=1,\ldots,k$.}
       \STATE Find $a_1,\ldots,a_k \in R$ with $b=a_1\cdot c_1+\ldots+a_k\cdot c_k$, which is possible by Convention \ref{con:groundRing}.
       \FOR{$i=k,\ldots,1$}
       \IF{$a_i \cdot c_i \neq 0$ and $a_i \cdot c_i \in \langle c_j\mid j<i \rangle$}
       \STATE Find $h_1,\ldots,h_{i-1} \in R$ such that
       $a_i\cdot c_i=h_1\cdot c_1+\ldots+h_{i-1}\cdot c_{i-1}$.
       \STATE Set $a_j:=a_j+h_j$ for all $j<i$, and $a_i:=0$.
       \ENDIF
       \ENDFOR
       \RETURN{$(a_1,\ldots,a_k)$}
     \end{algorithmic}
   \end{algorithm}
   \begin{proof}
     Termination and correctness are obvious.
   \end{proof}

   With this preparation we are able to formulate and prove
   determinate division with remainder for $x$-homogeneous ideals and
   modules.

   \begin{definition}\label{def:homogeneity}
     For an element $f=\sum_{\beta,\alpha,i} c_{\alpha,\beta,i} \cdot
     t^\beta x^\alpha\cdot e_i\in\Rtx^s$ we define its
     \emph{$x$-degree} to be
     \begin{displaymath}
       \deg_x(f):=\max\{|\alpha|\mid c_{\alpha,\beta,i}\neq 0\},
     \end{displaymath}
     and we call it \emph{$x$-homogeneous}, if all its terms are of the same $x$-degree.

     Given a weight vector $w\in\RR_{< 0}^m\times\RR^n\times\RR^s$, we define its \emph{weighted degree} with respect to $w$ to be
     \begin{displaymath}
       \deg_w(f):=\max\{w\cdot(\beta,\alpha,e_i) \mid c_{\alpha,\beta,i}\neq 0\},
     \end{displaymath}
     and we call it \emph{weighted homogeneous} with respect to $w$, if all its terms are of the same weighted degree.
   \end{definition}

   \begin{algorithm}[$\HDDwR$, homogeneous determinate division with remainder]\label{alg:HDDwR}\skipalgorithm
     \begin{algorithmic}[1]
       \REQUIRE{$(f,G,>)$, where $f\in\Rtx^s$ $x$-homogeneous,
         $G=(g_1,\ldots,g_k)$ a $k$-tuple of
         $x$-homogeneous elements in $\Rtx^s$ and $>$ be a $t$-local
         monomial ordering on $\Mon^s(t,x)$.}
       \ENSURE{$(Q,r)$, where $Q=(q_1,\ldots,q_k)\in\Rtx^k$ and $r\in \Rtx^s$ such that
         \begin{displaymath}
           f=q_1\cdot g_1+\hdots+q_k\cdot g_k+r
         \end{displaymath}
         satisfies \kurz{(DD1), (DD2) and}
         \begin{description}
         \lang{\item[\rm (DD1)] no term of $q_i\cdot\lt_>(g_i)$ lies in $\langle\lt_>(g_j)\mid j<i\rangle$ for all $i$,
         \item[\rm (DD2)] no term of $r$ lies in $\langle\lt_>(g_1),\ldots,\lt_>(g_k)\rangle$,}
         \item[\rm (DDH)] the $q_1,\ldots,q_k,r$ are either $0$ or $ x$-homogeneous of $ x$-degree \linebreak
           $\deg_ x(f)-\deg_ x(g_1),\ldots,\deg_ x(f)-\deg_ x(g_k),\deg_ x(f)$ respectively.
         \end{description}}
       \STATE Set $q_i:= 0$ for $i=1,\ldots,k$, $r:=0$, $\nu:=0$, $f_\nu:=f$.
       \WHILE{$f_\nu\neq 0$}
       \IF{$\lt_>(f_\nu) \in \langle\lt_>(g_1),\ldots,\lt_>(g_k)\rangle$}
       \STATE Let $D_\nu:=\{ g_i\in G\mid\lm_>(g_i)\text{ divides }\lm_>(f_\nu) \}\{ g_{i_1},\ldots,g_{i_l} \}$.
       \STATE Compute $(a_{i_1},\ldots,a_{i_l})=\Div_R(\lc_>(f_\nu),(\lc_>(g_{i_1}),\ldots,\lc_>(g_{i_l})))$.
       \STATE Set
       \begin{displaymath}
         q_{i,\nu}:=
         \begin{cases}
           a_i\cdot \frac{\lm_>(f_\nu)}{\lm_>(g_i)} & \text{, if } g_i\in D_\nu, \\
           0                                              & \text{, otherwise, }
         \end{cases}
       \end{displaymath}
       for $i=1,\ldots,k$, and $r_\nu:=0$.
       \ELSE
       \STATE Set $q_{i,\nu}:=0$, for $i=1,\ldots,k$, and $r_\nu:=\lt_>(f_\nu)$.
       \ENDIF
       \STATE Set $q_i:=q_i + q_{i,\nu}$ for $i=1,\ldots,k$ and $r:= r+r_\nu$.
       \STATE Set $f_{\nu+1}:=f_{\nu} - (q_{1,\nu}\cdot g_1 + \ldots + q_{k,\nu}\cdot g_k + r_\nu)$ and $\nu:=\nu+1$.
       \ENDWHILE
       \RETURN{$((q_1,\ldots,q_k),r)$}
     \end{algorithmic}
   \end{algorithm}

   \begin{proof} Note that we have a descending chain of terms to be eliminated
     \begin{displaymath}
       \lm_>(f_0)>\lm_>(f_1)> \lm_>(f_2)> \ldots,
     \end{displaymath}
     which implies that, except the terms that are zero, we have $k+1$ descending chains of factors and remainders
     \begin{center}
       \begin{tabular}{>{$}c<{$} >{$}c<{$} >{$}c<{$} >{$}c<{$} >{$}c<{$} >{$}c<{$} >{$}c<{$} >{$}c<{$}}
         \lm_>(q_{i,0})&>&\lm_>(q_{i,1})&>&\lm_>(q_{i,2})&>& \ldots &, \\
         \lm_>(r_0)    &>&\lm_>(r_1)    &>&\lm_>(r_2)    &>& \ldots &.
       \end{tabular}
     \end{center}

     By construction, each $q_{i,\nu}$, $i=1,\ldots,k$,
     is $ x$-homogeneous of $ x$-degree $\deg_ x(f)-\deg_ x(g_i)$, and
     each $r_\nu$ is $ x$-homogeneous of $ x$-degree $\deg_ x(f)$,
     unless they are zero. Because of Lemma~\ref{lem:markwig1} we may
     assume that the ordering $>$ is a $t$-local weighted monomial ordering.
     Thus, by Lemma \ref{lem:markwig2}, the $q_{i,\nu}$ and $r_\nu$ converge to zero in the
     $\langle t\rangle$-adic topology, so that
     \begin{displaymath}
       q_i:=\sum_{\nu=0}^\infty q_{i,\nu}\in \Rtx \text{ and } r:=\sum_{\nu=0}^\infty r_\nu\in \Rtx^s
     \end{displaymath}
     exist and the following representation satisfies (DDH):
     \begin{equation}\label{eq:HDDwR}
       f=q_1\cdot g_1+\ldots+q_k\cdot g_k+r.
     \end{equation}
     Observe that, because all $q_{i,\nu}$ and $r_\nu$ are terms with
     distinct monomials, each non-zero term of $q_i\cdot \lt_>(g_i)$
     or $r$ equals $q_{i,\nu}\cdot \lt_>(g_i)$ or $r_\nu$
     respectively, for some $\nu\in\NN$.

     So first, let $p$ be a non-zero term of $q_i\cdot \lt_>(g_i)$, say $p=q_{i,\nu}\cdot \lt_>(g_i)$ for some $\nu\in\NN$.
     Then $\lc_>(q_{i,\nu})\neq 0$ implies that
     $\lc_>(q_{i,\nu}\cdot g_i)\notin \langle \lc_>(g_j)\mid j<i \text{ with } g_j\in D_\nu\rangle_R$.
     In particular, we have
     $\lt_>(q_{i,\nu}\cdot g_i)=q_{i,\nu}\cdot \lt_>(g_i)\notin \langle \lt_>(g_j)\mid j<i \text{ with } g_j\in D_\nu\rangle$.
     Therefore we also get $q_{i,\nu}\cdot \lt_>(g_i) \notin \langle \lt_>(g_j)\mid j<i \rangle$,
     since the leading monomials of all $g_j\notin D_\nu$ do not divide $\lm_>(f_\nu)=\lm_>(q_{i,\nu}\cdot g_i)$.
     Thus (\ref{eq:HDDwR}) satisfies (DD1).

     Lastly, let $p$ be a non-zero term of $r$, i.e. $p=r_\nu$ for a suitable $\nu$.
     But because $r_\nu\neq 0$, we have $r_\nu=\lt_>(f_\nu) \notin \langle \lt_>(g_1),\ldots,\lt_>(g_k) \rangle$ by default.
     Therefore, our representation (\ref{eq:HDDwR}) also satisfies (DD2).
   \end{proof}

   In the proof we have used the following two Lemmata whose proof can be
   found in \cite{Markwig08}. The
   first Lemma allows us to restrict ourselves to weighted monomial
   orderings, while the second guarantees $\langle t \rangle$-adic
   convergence.

   \begin{lemma}[\cite{Markwig08} Lemma 2.5]\label{lem:markwig1}
     Let $>$ be a $t$-local monomial ordering on $\Mon^s(t,x)$, and
     let $g_1,\ldots,g_k\in\Rtx^s$ be $x$-homogeneous. Then there
     exists a weight vector $w\in\RR_{<0}^m\times\RR^{n+s}$ such that
     any $t$-local weight ordering with weight vector $w$, say $>_w$, induces the same leading monomials as $>$ on $g_1,\ldots,g_k$, i.e.
     \begin{displaymath}
       \lm_{>_w}(g_i) = \lm_>(g_i) \text{ for all } i=1,\ldots,k.
     \end{displaymath}
   \end{lemma}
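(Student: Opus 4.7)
The plan is to reduce the (potentially infinite) family of monomial comparisons that determine each $\lm_>(g_i)$ to a finite system of strict linear inequalities, and then to invoke a Robbiano-type representation of the $t$-local ordering $>$ by a real weight matrix in order to exhibit a single $w\in\RR_{<0}^m\times\RR^{n+s}$ satisfying them all.

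Write $\lm_>(g_i)=t^{\beta_i}x^{\alpha_i}\cdot e_{j_i}$. Since each $g_i$ is $x$-homogeneous and the module has finite rank $s$, only finitely many pairs $(\alpha,j)$ occur in its support. For each such pair the set of accompanying $t$-exponents $A_i(\alpha,j)\subseteq\NN^m$ may be infinite, but by Dickson's lemma it has only finitely many minimal elements under componentwise $\leq$. For any $w$ with $w_t\in\RR_{<0}^m$, increasing a $t$-coordinate strictly decreases $w\cdot(\delta,\alpha,e_j)$, so the $w$-maximum over $\delta\in A_i(\alpha,j)$ is automatically attained at a minimal element. Moreover $\beta_i$ itself is minimal in $A_i(\alpha_i,j_i)$, since a strictly smaller $\delta$ there would give $t^\delta x^{\alpha_i}e_{j_i}>t^{\beta_i}x^{\alpha_i}e_{j_i}$ by $t$-locality, contradicting the choice of leading monomial. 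Hence it suffices to find $w$ realizing the finite system of strict inequalities
\begin{displaymath}
w\cdot(\beta_i,\alpha_i,e_{j_i})\;>\;w\cdot(\delta,\alpha,e_j)
\end{displaymath}
for the finitely many quadruples $(i,\alpha,j,\delta)$ with $(\delta,\alpha,j)\neq(\beta_i,\alpha_i,j_i)$ and $\delta$ minimal in $A_i(\alpha,j)$.

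Each of these inequalities is dictated by the corresponding strict relation already holding in $>$. To package them into a single weight vector I would appeal to the representability of a $t$-local monomial ordering by a finite real weight matrix (Robbiano's theorem in its local variant): a sufficiently generic positive combination of the top rows of such a matrix breaks the finitely many listed ties in the same direction as $>$, and a small perturbation of the $t$-block ensures $w_t\in\RR_{<0}^m$ without disturbing any of the finitely many strict inequalities obtained above.

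I expect the main obstacle to be precisely this simultaneous realization of $w_t<0$ componentwise together with all the linear constraints: each constraint in isolation is harmless, but one has to rule out that the $t$-local tiebreaker of $>$ exploits directions no single weight vector can emulate. This is exactly what the Robbiano-type representation delivers, and accordingly, as indicated in the statement, I would cite Lemma~2.5 of \cite{Markwig08} for the underlying combinatorial result rather than reproduce its proof.
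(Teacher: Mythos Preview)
The paper does not prove this lemma at all: it is stated with an explicit citation to \cite{Markwig08}, Lemma~2.5, and the only elaboration is the illustrative Example~\ref{ex:markwig1}, which for a single polynomial shows how to take the first row of a matrix representation of $>$ and perturb the $t$-entries slightly negative without destroying the relevant strict inequality. Your proposal likewise ends by deferring to \cite{Markwig08}, so at the level of what is actually proved the two coincide.

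Your sketch is nonetheless more detailed than what the paper offers and is essentially correct. The Dickson reduction is the right way to pass from the possibly infinite support of each $g_i$ to a finite system of strict linear constraints, and your observation that $\beta_i$ is itself minimal in $A_i(\alpha_i,j_i)$ by $t$-locality is exactly what is needed. For the Robbiano step, one clean way to handle the point you flag as the main obstacle is simply to adjoin the $m$ constraints $w\cdot(e_j^t,0,0)<0$ (equivalently $1>t_j$) to your finite system: since $>$ is $t$-local these are already strict relations in $>$, so the standard $a_1+\varepsilon a_2+\varepsilon^2 a_3+\cdots$ combination of the rows of a representing matrix satisfies them together with the others for small $\varepsilon$, giving $w_t\in\RR_{<0}^m$ automatically rather than by a separate perturbation. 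This is precisely the mechanism behind the paper's Example~\ref{ex:markwig1}.
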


\ifx\langform\ja
     \begin{example}\label{ex:markwig1}
       A monomial ordering can always be expressed by an invertible
       matrix. For example, the lexicographical ordering $>$ on
       $\Mon(t,x)$ with $x_1>x_2>1>t$ is given by
       \begin{displaymath}
         t^\beta x^\alpha > t^\delta x^\gamma \quad \Longleftrightarrow
         \quad A\cdot (\beta,\alpha)^t > A\cdot (\delta,\gamma)^t,
         \text{ where
         }A=\left(\begin{smallmatrix}\phantom{-}0&1&0&\\\phantom{-}0&0&1\\-1&0&0 \end{smallmatrix}\right),
       \end{displaymath}
       where the $>$ on the right hand side denotes the lexicographical ordering on~$\RR^3$.

       Consider the polynomial $g=t^5x_1+t^2x_2$.
       In order to find a weight vector $w\in\RR_{<0}\times\RR^2$ such
       that $\lm_{>_w}(g)=\lm_>(g)=t^5x_1$, consider the first row
       vector of $A$, $a_1=(0,1,0)\in\RR^3$. Since
       $a_1\notin\RR_{<0}\times\RR^2$ it represents no viable choice for
       $w$. But because $\deg_{a_1}(t^5x_1) > \deg_{a_1}(t^2x_2)$,
       adding a sufficiently small negative weight in $t$ will not break
       the strict inequality. Hence we obtain
       $w=(-\frac{1}{5},1,0)\in\RR_{<0}\times\RR^2$:
       \begin{center}
         \begin{tikzpicture}
           \matrix (m) [matrix of math nodes, row sep=1.5em, column sep=0em, 
           column 1/.style={anchor=base east},
           column 7/.style={anchor=base west}]
           { \deg_{(0,1,0)}(t^5x_1) & = & 1 & > & \phantom{-}0 & = & \deg_{(0,1,0)}(t^2x_2) \\
             \deg_{(-1/5,1,0)}(t^5x_1) & = & 0 & > & -\frac{2}{5} & = & \deg_{(-1/5,1,0)}(t^2x_2). \\ };
           \draw[->,font=\scriptsize] (m-1-1) -- node[left] {$-(1/5,0,0)$} (m-2-1);
           \draw[->,font=\scriptsize] (m-1-7) -- node[right] {$-(1/5,0,0)$} (m-2-7);
         \end{tikzpicture}
       \end{center}

       In particular, a determinate division with remainder with respect
       to $>_w$ will also be a determinate division with remainder with
       respect to $>$, as (DD1) and (DD2) are only dependant on the
       leading terms.
     \end{example}
\fi

   \begin{lemma}[\cite{Markwig08} Lemma 2.6]\label{lem:markwig2}
     Let $>_w$ be a $t$-local monomial ordering on $\Mon^s(t,x)$ with
     weight vector $w\in\RR_{<0}^m\times\RR^{n+s}$, and let
     $(f_k)_{k\in\NN}$ be a sequence of $x$-homogeneous elements of
     fixed $x$-degree in $\Rtx^s$ such that $\lm_{>_w}(f_k) >
     \lm_{>_w}(f_{k+1})$ for all $k\in\NN$. Then $(f_k)_{k\in\NN}$
     converges to zero in the $\langle t\rangle$-adic topology, i.e.
     \begin{displaymath}
       \forall\, N\in\NN \;\;\exists\, M\in\NN: \;\;f_k\in\langle t\rangle^N\cdot\Rtx^s \;\;\forall k\geq M.
     \end{displaymath}
     In particular, the element $\sum_{k=0}^\infty f_k\in\Rtx^s$ exists.
   \end{lemma}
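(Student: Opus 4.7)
The plan is to prove this in two stages. First, I will show that the $w$-degrees $\mu_k := \deg_w(\lm_{>_w}(f_k))$ tend to $-\infty$. Second, I will convert this into $\langle t\rangle$-adic convergence by exploiting the fact that each $f_k$ is $x$-homogeneous of fixed $x$-degree, say $d$, together with the strict negativity of the $t$-part of the weight vector.

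The key finiteness observation is that for any $L\in\RR$, only finitely many module monomials $t^\beta x^\alpha e_i$ with $|\alpha|=d$ and $i\in\{1,\ldots,s\}$ satisfy $w\cdot(\beta,\alpha,e_i)\geq L$. Indeed, the pairs $(\alpha,i)$ already form a finite set, so after fixing one the condition reduces to $w_t\cdot\beta\geq L'$ for some $L'\in\RR$. Setting $w_{\max}:=\max_j w_{t,j}<0$, one has $w_t\cdot\beta\leq w_{\max}\cdot|\beta|$, which forces $|\beta|\leq L'/w_{\max}$ and leaves only finitely many $\beta\in\NN^m$. Since $>_w$ prioritises $w$-degree, the sequence $\mu_k$ is weakly decreasing, and it lives in a subset of $\RR$ whose intersection with any half-line $[L,\infty)$ is finite. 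Hence it either tends to $-\infty$ or eventually stabilises at some $\mu^*\in\RR$. In the latter case, the leading monomials would eventually all lie in the finite set of monomials with $x$-degree $d$ and $w$-degree $\mu^*$, contradicting the strict $>_w$-decrease of $\lm_{>_w}(f_k)$.

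For the second stage, fix $N\in\NN$. The module monomials with $|\alpha|=d$ and $|\beta|<N$ form a finite set, so their $w$-degrees are bounded below by some $L_N\in\RR$. By stage one, there exists $M\in\NN$ with $\mu_k<L_N$ for all $k\geq M$. Since the leading monomial realises the maximum $w$-degree of $f_k$, every monomial occurring in $f_k$ has $w$-degree at most $\mu_k<L_N$, hence cannot satisfy $|\beta|<N$. Therefore $f_k\in\langle t\rangle^N\cdot\Rtx^s$ for all $k\geq M$, which is the desired convergence; the existence of $\sum_{k=0}^\infty f_k$ in $\Rtx^s$ follows at once.

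The principal obstacle is the tiebreaker inside $>_w$: strict decrease under $>_w$ does not immediately yield strict decrease of the $w$-degrees, since the tiebreaker can separate distinct monomials sharing the same $w$-degree. The finiteness of monomials of $x$-degree $d$ at any fixed $w$-degree is exactly what forbids an infinite run at a single $w$-degree and thus forces $\mu_k\to-\infty$; the rest of the argument is then book-keeping on the negativity of $w_t$.
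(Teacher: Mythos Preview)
Your proof is correct. The paper itself does not supply a proof of this lemma; it merely states the result and refers to \cite{Markwig08}, Lemma~2.6, so there is no in-paper argument to compare against. Your two-stage approach---first using the finiteness of module monomials of fixed $x$-degree above any given $w$-degree threshold to force $\deg_w(\lm_{>_w}(f_k))\to-\infty$, then translating this into $\langle t\rangle$-adic smallness via the strict negativity of the $t$-weights---is the natural and standard route, and is in substance the argument one finds in the cited reference. Your explicit handling of the tiebreaker (pointing out that only finitely many monomials share a given $w$-degree at fixed $x$-degree, so no infinite strictly decreasing run can occur at a single $w$-level) is exactly the point that needs care, and you treat it correctly.
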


   \begin{remark}[polynomial input]\label{rem:polynomialHDDwR}
     In case $m=0$, i.e. $\Rtx^s = R[x]^s$, all $f,g_1,\ldots,g_k\in
     R[x]^s$ are homogeneous and so is any polynomial appearing in our
     algorithm. Moreover, all $f_\nu$, unless $f_\nu=0$, have the same
     $x$-degree as $f$. And since there are only finitely many
     monomials of a given degree, there cannot exist an infinite
     sequence of decreasing leading monomials
     \begin{displaymath}
       \lm_>(f_0)>\lm_>(f_1)>\lm_>(f_2)>\ldots\, ,
     \end{displaymath}
     and Algorithm \ref{alg:HDDwR} has to terminate.
   \end{remark}

   \begin{remark}[weighted homogeneous input]\label{rem:HDDwRweightedHomogeneous}
     Similar to how the output is $x$-homogeneous because the input is
     $x$-homogeneous, note that if the input is weighted homogeneous
     with respect to a certain weight vector
     $w\in\RR_{<0}^m\times\RR^n$, then so is the output. This will be
     essential when computing tropical varieties over the $p$-adic
     numbers.
   \end{remark}

   \begin{example}\label{ex:HDDwR}
     Over a ground field, as in the proof of Theorem 2.1 in
     \cite{Markwig08}, all the terms of $f_\nu$ can be simultaneously
     checked for containment in $\langle \lt_>(g_1),\ldots,
     \lt_>(g_k)\rangle$, eliminating the terms which lie in the ideal
     using $g_1,\ldots,g_k$ and discarding the terms which are outside
     the ideal to the remainder. However, this is not possible if $R$
     is no field.

     Let $f=2x, g=2x+2tx+t^2x+3t^3x\in\ZZ\llbracket t\rrbracket [x]$
     and consider a weighted ordering $>=>_w$ with weight vector
     $w=(-1,1)\in\RR_{<0}\times\RR$. Then Figure
     \ref{fig:divisionCarelesslyDiscardingRemainder} illustrates a
     division algorithm, which discards any term of $f_\nu$ not
     divisible by $\lt_>(g)$ directly to the remainder. The underlined
     term marks the respective leading term.

     \begin{figure}[h]
       \centering
       \begin{tikzpicture}
         \matrix (m) [matrix of math nodes, row sep=1em, column sep=0em,
         column 3/.style={anchor=base west}]
         { f_0&=&\underline{2x} & \qquad &r\\
           f_1&=&-\underline{2tx}-\overbrace{t^2x-3t^3x}^{\text{to remainder}} \\
           f_2&=&\underline{2t^2x}+\overbrace{t^3x+3t^4x}^{\text{to remainder}} \\
           f_3&=&-\underline{2t^3x}-\overbrace{t^4x-3t^5x}^{\text{to remainder}} \\
         };
         \draw[->,shorten >=3pt,shorten <=3pt,font=\scriptsize] (m-1-2) -- node[left] {$-g$} (m-2-2);
         \draw[->,shorten >=3pt,shorten <=3pt,font=\scriptsize] (m-2-2) -- node[left] {$+t g$} (m-3-2);
         \draw[->,shorten >=3pt,shorten <=3pt,font=\scriptsize] (m-3-2) -- node[left] {$-t^2 g$} (m-4-2);
         \node[yshift=-0.5cm] at (m-4-2) {$\vdots$};

         \node[yshift=-0.77cm] (r11) at (m-1-5) {$-t^2x$};
         \node[yshift=-0.77cm] (r12) at (r11) {$3t^3x$};
         \node[yshift=-0.86cm] (r21) at (r12) {$t^3x$};
         \node[yshift=-0.77cm] (r22) at (r21) {$3t^4x$};
         \node[yshift=-0.86cm] (r31) at (r22) {$t^4x$};
         \node[yshift=-0.77cm] (r32) at (r31) {$3t^5x$};
         \node[yshift=-0.55cm] at (r32) {$\vdots$};

         \draw[draw opacity=0] (m-1-5) -- node[sloped] {$=$} (r11);
         \draw[draw opacity=0] (r11) -- node[sloped] {$-$} (r12);
         \draw[draw opacity=0] (r12) -- node[sloped] {$+$} (r21);
         \draw[draw opacity=0] (r21) -- node[sloped] {$+$} (r22);
         \draw[draw opacity=0] (r22) -- node[sloped] {$-$} (r31);
         \draw[draw opacity=0] (r31) -- node[sloped] {$-$} (r32);
       \end{tikzpicture}
       \caption{division slice by slice}
       \label{fig:divisionCarelesslyDiscardingRemainder}
     \end{figure}

     Not only would this process continue indefinitely, every term in our remainder but the first would actually be divisible by $\lt_>(g)$:
     \begin{displaymath}
       r=-t^2x-3t^3x+t^3x+3t^4x-t^4x-\ldots = -xt^2-2xt^3+2xt^4-2xt^5+\ldots \,.
     \end{displaymath}

     As we see, it is important to know when terms can be safely
     discarded to the remainder, and the only way to guarantee that is
     by proceeding term by term instead of slice by slice. And in
     order to guarantee that our result converges in the $\langle
     t\rangle$-adic topology, the order needs to be compatible with a
     weighted monomial order $>_w$ with
     $w\in\RR_{<0}^m\times\RR^{n+s}$. Figure
     \ref{fig:divisionTermByTerm} shows the same example in our
     algorithm.

     \begin{figure}[h]
       \centering
       \begin{tikzpicture}
         \matrix (m) [matrix of math nodes, row sep=1em, column sep=0em, 
         column 1/.style={anchor=base east},
         column 3/.style={anchor=base west}]
         { f_0&=&\underline{2x} & \qquad & r \\
           f_1&=&-\underline{2tx}-\overbrace{t^2x-3t^3x}^{\text{to be processed}} \\
           \phantom{\overbrace{t^1}^{\text{tpd}}}f_2&=&\underline{t^2x}-2t^3x+3t^4x\phantom{\overbrace{t^1}^{\text{tpd}}} & & t^2x \\
           \phantom{\overbrace{t^1}^{\text{tpd}}}f_3&=&-\underline{2t^3x}+3t^4x\phantom{\overbrace{t^1}^{\text{tpd}}} \\
           \phantom{\overbrace{t^1}^{\text{tpd}}}f_4&=&\underline{5t^4x}+t^5x+3t^6x\phantom{\overbrace{t^1}^{\text{tpd}}} & & 5t^4x \\
           \phantom{\overbrace{t^1}^{\text{tpd}}} &\vdots& \phantom{\overbrace{t^1}^{\text{tpd}}} & & t^5x \\
           \phantom{\overbrace{t^1}^{\text{tpd}}}f_7&=&0\phantom{\overbrace{t^1}^{\text{tpd}}} \\
         };
         \draw[->,shorten >=3pt,shorten <=3pt,font=\scriptsize] (m-1-2) -- node[left] {$-g$} (m-2-2);
         \draw[->,shorten >=3pt,shorten <=3pt,font=\scriptsize] (m-2-2) -- node[left] {$+tg$} (m-3-2);
         \draw[->,shorten >=3pt,shorten <=3pt,font=\scriptsize] (m-3-2) -- node[left] {to remainder} (m-4-2);
         \draw[->,shorten >=3pt,shorten <=3pt,font=\scriptsize] (m-4-2) -- node[left] {$+t^2g$} (m-5-2);
         \draw[->,shorten >=-3pt,shorten <=3pt,font=\scriptsize] (m-5-2) -- node[left] {to remainder} (m-6-2);
         \draw[->,shorten >=3pt,shorten <=3pt,font=\scriptsize] (m-6-2) -- node[left] {to remainder} (m-7-2);

         \node[yshift=-0.77cm] (rEnd) at (m-6-5) {$3t^6x$};

         \draw[draw opacity=0] (m-1-5) -- node[sloped] {$=$} ($(m-1-5)+(0,-1)$);
         \node[anchor=south] at (m-5-5.north) {$+$};
         \node[anchor=south] at (m-6-5.north) {$+$};
         \draw[draw opacity=0] (rEnd) -- node {$+$} (m-6-5);
       \end{tikzpicture}
       \caption{division term by term}
       \label{fig:divisionTermByTerm}
     \end{figure}

     We obtain a representation satisfying (DD1), (DD2) and (DDH):
     \begin{displaymath}
       f = (\underbrace{1-t-t^3}_{=q})\cdot g + (\underbrace{xt^2+5xt^4+xt^5+3xt^6}_{=r}).
     \end{displaymath}
   \end{example}

   Having constructed a homogeneous determinate division with
   remainder, we will now introduce homogenization, dehomogenization
   and the ecart to continue with a weak division with remainder.

   \begin{definition}[Homogenization and dehomogenization]
     For an element $f=\sum_{\beta,\alpha,i}c_{\alpha,\beta,i}\cdot
     t^\beta x^\alpha\cdot e_i\in\Rtx^s$ we define its
     \emph{homogenization} to be
     \begin{displaymath}
       f^h:=\sum_{\alpha,\beta,i} c_{\alpha,\beta,i}\cdot t^\beta x_0^{\deg_x(f)-|\alpha|} x^\alpha\cdot e_i\in R\llbracket t \rrbracket[x_h]^s
     \end{displaymath}
     with $x_h=(x_0,x)=(x_0,x_1,\ldots,x_n)$.
     And for an element $F\in R\llbracket t\rrbracket[x_h]^s$ we define its \emph{dehomogenization} to be $F|_{x_0=1}\in\Rtx^s$.
   \end{definition}

   \begin{remark}[Homogenization and dehomogenization]\label{re:homogenization}
     Any monomial ordering $>$ on $\Mon^s(t,x)$, can be naturally extended to an ordering $>_h$ on $\Mon^s(t,x_0,x)$ through
     \begin{align*}
       a >_h b \quad :\Longleftrightarrow \quad & \deg_{x_h}(a)>\deg_{x_h}(b) \text{ or } \\
       & \deg_{x_h}(a)=\deg_{x_h}(b) \text{ and } a|_{x_0=1} > b|_{x_0=1}.
     \end{align*}
     Defining the \emph{ecart} of an element $f\in\Rtx^s$ with respect to $>$ to be
     \begin{displaymath}
       \ecart_>(f) := \deg_x(f)-\deg_x(\lm_>(f))\in\NN,
     \end{displaymath}
     one can show that for any elements $g,f\in\Rtx^s$ and any $x_h$-homogeneous $F\in R\llbracket t\rrbracket[x_h]$:
     \begin{enumerate}
     \item $f=(f^h)^d$,
     \item $F=x_0^{\deg_{x_h}(F)-\deg_x(F^d)}\cdot (F^d)^h$,
     \item $\lt_{>_h}(f^h) = x_0^{\ecart_>(f)}\cdot\lt_>(f)$,
     \item $\lt_{>_h}(F) = x_0^{\ecart_>(F^d)+\deg_{x_h}(F)-\deg_x(F^d)}\cdot \lt_>(F^d)$,
     \item $\lm_{>_h}(g^h) | \lm_{>_h}(f^h) \quad \Longleftrightarrow$\phantom{$x_0^{\ecart_>(F^d)+\deg_{x_h}(F)-\deg_x(F^d)}$} \\ \phantom{$\quad$} $\lm_>(g) | \lm_>(f)$ and $\ecart_>(g)\leq\ecart_>(f)$,
     \item $\lm_{>_h}(g^h) \mid \lm_{>_h}(F) \quad \Longleftarrow$\phantom{$x_0^{\ecart_>(F^d)+\deg_{x_h}(F)-\deg_x(F^d)}$} \\ \phantom{$\quad$} $\lm_>(g) | \lm_>(F^d)$ and $\ecart_>(g)\leq\ecart_>(F^d)$.
     \end{enumerate}
   \end{remark}

   With this preparation we are now able to formulate and prove weak
   division with remainder.

   \begin{algorithm}[DwR, weak division with remainder] \label{alg:DwR}\skipalgorithm
     \begin{algorithmic}[1]
       \REQUIRE{$(f,G,>)$, where $f\in\Rtx^s$ and $G=(g_1,\ldots,g_k)$
         is a $k$-tuple in $\Rtx^s$ and $>$ a weighted $t$-local
         monomial ordering on $\Mon^s(t,x)$.}
       \ENSURE{$(u,Q,r)$, where $u\in \Rtx$ with $\lt_>(u)=1$,
         $Q=(q_1,\ldots,q_k)\subseteq\Rtx^k$
         and $r\in \Rtx^s$ such that
         \begin{displaymath}
           u\cdot f=q_1\cdot g_1+\hdots+q_k\cdot g_k+r
         \end{displaymath}
         satisfies \kurz{(ID1) and (ID2).

         }
         \lang{\begin{description}
         \item[\rm (ID1)] $\lm_>(f)\geq\lm_>(q_i\cdot g_i)$ for $i=1,\ldots,k$ and
         \item[\rm (ID2)] $\lt_>(r)\notin \langle\lt_>(g_1),\ldots,\lt_>(g_k)\rangle$, unless $r=0$.
         \end{description}}
         Moreover, the algorithm requires only a finite number of recursions.}
       \IF{$f\neq 0$ and $\lt_>(f)\in\langle\lt_>(g_1),\ldots,\lt_>(g_k)\rangle$}
       \STATE Set $D:=\{ g_i\in G \mid \lm_>(g_i) \text{ divides } \lm_>(f) \}$ and $D' := \emptyset$.
       \WHILE{$\lt_>(f)\notin \langle \lt_>(g_i)\mid g_i\in D' \rangle$}
       \STATE Pick $g\in D$ with minimal ecart. 
       \STATE Set $D' := D' \cup \{g\}$ and $D:=D\setminus \{g\}$.
       \ENDWHILE
       \IF{$e:= \max\{\ecart_>(g)\mid g\in D' \} -  \ecart_>(f)>0$}
       \STATE Compute
       \begin{displaymath}
         ((Q_1',\ldots,Q_k'),R'):=\HDDwR(x_0^e\cdot f^h,(\lt_>(g_1^h),\ldots,\lt_>(g_k^h)),>_h).
       \end{displaymath}
       \STATE Set $f':=(x_0^e\cdot f^h - \sum_{i=1}^k Q_i'\cdot g_i^h)^d$.
       \STATE Run \begin{displaymath}
         (u'',(q_1'',\ldots,q_{k+1}''),r):=\DwR(f',(g_1,\ldots,g_k,f),>).
       \end{displaymath}
       \STATE Set $q_i:= q_i''+u''\cdot Q_i'^d$, $i=1,\ldots,k$.
       \STATE Set $u:=u''-q_{k+1}''$.
       \ELSE
       \STATE Compute \begin{displaymath}
         ((Q_1',\ldots,Q_k'),R'):=\HDDwR(f^h,(g_1^h,\ldots,g_k^h),>_h).
       \end{displaymath}
       \STATE Run \begin{displaymath}
         (u,(q_1'',\ldots,q_k''),r):=\DwR((R')^d,(g_1,\ldots,g_k),>).
       \end{displaymath}
       \STATE Set $q_i:=q_i''+u\cdot Q_i'^d$, $i=1,\ldots,k$.
       \ENDIF
       \ELSE
       \STATE Set $(u,(q_1,\ldots,q_k),r):=(1,(0,\ldots,0),f)$.
       \ENDIF
       \RETURN{$(u,(q_1,\ldots,q_k),r)$.}
     \end{algorithmic}
   \end{algorithm}
   \begin{proof}
     \emph{Finiteness of recursions:} For sake of clarity, label all
     the objects appearing in the $\nu$-th recursion
     step by a subscript $\nu$. For example the ecart $e_\nu\in\NN$,
     the element $f_\nu\in\Rtx^s$ and the subset
     $G_\nu\subseteq\Rtx^s$.

     Since $G_1^h\subseteq G_2^h\subseteq G_3^h\subseteq \hdots$, we
     have an ascending chain of leading ideals in $\Rtxx^s$, which
     eventually stabilizes unless the algorithm terminates beforehand
     \begin{displaymath}
       \LT_{>_h}(G_1^h)\subseteq  \LT_{>_h}(G_2^h)\subseteq \ldots \subseteq \LT_{>_h}(G_N^h) = \LT_{>_h}(G_{N+1}^h) = \ldots.
     \end{displaymath}

     Assume $e_N>0$. Then we'd have $f_N\in G_{N+1}$, and thus
     \begin{displaymath}
       \lt_{>_h}(f_N^h)\in \lt_{>_h}(G_{N+1}^h)=\lt_{>_h}(G_N^h).
     \end{displaymath}
     To put it differently, we'd have
     \begin{displaymath}
       \lt_{>_h}(f_N^h)\in\langle \lt_{>_h}(g^h)\mid g^h\in G_N^h \text{ with } \lm_{>_h}(g^h) \text{ divides } \lm_{>_h}(f_N^h) \rangle,
     \end{displaymath}
     which by Remark \ref{re:homogenization} (5) would imply that
     \begin{align*}
       \lt_>(f_N) \in \langle \lt_>(g)\mid g\in G_N \text{ with }& \lm_>(g) \text{ divides } \lm_>(f_N), \\
       & \text{ and }\ecart_>(g)\leq\ecart_>(f_N) \rangle.
     \end{align*}
     Consequently, we'd get
     \begin{displaymath}
       D'_N\subseteq \{ g\in G_N\mid \lm_>(g) \text{ divides } \lm_>(f_N) \text{ and } \ecart_>(g)\leq\ecart_>(f_N)\},
     \end{displaymath}
     contradicting our assumption
     \begin{displaymath}
       e_N = \max\{ \ecart_>(g) \mid g \in D_N' \} - \ecart_>(f_N) \overset{!}{>} 0. \quad \lightning
     \end{displaymath}
     Therefore we have $e_N\leq 0$. By induction we conclude that
     $e_\nu\leq 0$ for all $\nu\geq N$, i.e. that we will exclusively
     run through steps 14-16 of the ``else'' case from the $N$-th
     recursion step onwards.

     By the properties of HDDwR we know that in particular
     \begin{displaymath}
       \lt_{>_h}(R_N')\notin \LT_{>_h}(G_N^h).
     \end{displaymath}
     Now assume that the recursions would not stop with the next recursion.
     That means there exists a $D'_{N+1} \subseteq D_{N+1}$ with
     \begin{displaymath}
       \lt_>((R_N')^d)=\lt_>(f_{N+1})\in\langle \lt_>(g) \mid g\in D'_{N+1} \rangle
     \end{displaymath}
     such that
     $e_{N+1}=\max\{\ecart_>(g)\mid g\in D'_{N+1} \} - \ecart_>((R_N')^d)\leq 0$.
     From Remark \ref{re:homogenization} (6), this immediately implies the following contradiction
     \begin{displaymath}
       \lt_{>_h}(R_N')\in \LT_{>_h}(G_{N+1}^h)=\LT_{>_h}(G_N^h). \quad \lightning
     \end{displaymath}
     Hence the algorithm terminates after the $N+1$-th recursion step.

     \emph{Correctness:} We make an induction on the number of recursions, say $N\in\NN$.
     If $N=1$ then either $f=0$ or $\lt_>(f)\notin \langle\lt_>(g_1),\ldots,\lt_>(g_k)\rangle$, and in both cases
     \begin{displaymath}
       1\cdot f=0\cdot g_1+\hdots+0\cdot g_k+f
     \end{displaymath}
     satisfies (ID1) and (ID2).

     So suppose $N>1$ and consider the first recursion step.
     If $e \leq 0$, then by the properties of HDDwR the representation
     \begin{displaymath}
       f^h=Q_1'\cdot g_1^h+\hdots+ Q_k'\cdot g_k^h+R'
     \end{displaymath}
     satisfies (DD1), (DD2) and (DDH). (DD1) and (DD2) imply (ID1), which means that for each $i=1,\hdots,k$ we have
     \begin{flushleft}
       $x_0^{\ecart_>(f)}\cdot\lm_>(f)=\lm_{>_h}(f^h)\overset{(\text{ID1})}{\geq_h} \lm_{>_h}(Q_i')\cdot\lm_{>_h}(g_i^h)=\ldots$
     \end{flushleft}
     \begin{flushright}
       $\ldots=x_0^{a_i+\ecart_>(g_i)}\cdot\lm_>(Q_i'^d)\cdot\lm_>(g_i)$
     \end{flushright}
     for some $a_i\geq 0$. Since $f^h$ and $Q_i'\cdot g_i^h$ are both $x_h$-homogeneous of the same $x_h$-degree
     by (DDH), the definition of the homogenized ordering $>_h$ implies
     \begin{equation}
       \label{eq:dwr11}
       \lm_>(f)\geq\lm_>(Q_i'^d)\cdot\lm_>(g_i) \text{ for all } i=1,\ldots,k.
     \end{equation}
     Moreover, by induction the representation $u\cdot R'^d=q_1''\cdot g_1+\ldots+q_k''\cdot g_k+r$
     satisfies (ID1), (ID2) and $\lt_>(u)=1$, the first implying that
     \begin{equation}
       \label{eq:dwr12}
       \lm_>(f)\overset{(\ref{eq:dwr11})}{\geq}\lm_>\underbrace{\left(f-\sum_{i=1}^kQ_i'^d\cdot g_i\right)}_{=R'^d}\overset{\text{(ID1)}}{\geq}\lm_>(q_i''\cdot g_i).
     \end{equation}
     Therefore, the representation
     \begin{displaymath}
       u\cdot f=\sum_{i=1}^k(q_i''+u\cdot Q_i'^d)\cdot g_i+r
     \end{displaymath}
     satisfies (ID1) by (\ref{eq:dwr11}), (\ref{eq:dwr12}), $\lt_>(u)=1$ and (ID2) by induction.

     Similarly, if $e>0$, then by the properties of HDDwR the representation
     \begin{displaymath}
       x_0^e\cdot f^h=Q_1'\cdot\lt_{>_h}(g_1^h)+\ldots+Q_k'\cdot\lt_{>_h}(g_k^h)+R'
     \end{displaymath}
     satisfies (DD1), (DD2) and (DDH). (DD1) and (DD2) imply (ID1), which means that for each $i=1,\ldots,k$ we have
     \begin{flushleft}
       $x_0^{e+\ecart_>(f)}\cdot\lm_>(f)=\lm_{>_h}(x_0^e\cdot f^h) \geq \ldots$
     \end{flushleft}
     \begin{flushright}
       $\ldots\geq\lm_{>_h}(Q_i')\cdot\lm_{>_h}(\lt_{>_h}(g_i^h))=x_0^{a_i+\ecart_>(g_i)}\cdot\lm_>(Q_i'^d)\cdot\lm_>(g_i),$
     \end{flushright}
     for some $a_i\geq 0$.
     Since $x_0^e\cdot f^h$ and $Q_i'\cdot \lt_{>_h}(g_i^h)$ are both
     $x_h$-homogeneous of the same $x_h$-degree by (DDH), the
     definition of the homogenized ordering $>_h$ implies
     \begin{equation}
       \label{eq:dwr21}
       \lm_>(f)\geq\lm_>(Q_i'^d)\cdot\lm_>(g_i).
     \end{equation}
     Moreover, by induction the representation
     $u''\cdot f'=\sum_{i=1}^kq_i''\cdot g_i+q_{k+1}''\cdot f+r $
     satisfies (ID1), (ID2) and $\lt_>(u'')=1$ with the first implying that
     \begin{equation}
       \label{eq:dwr22}
       \lm_>(f)\overset{(\ref{eq:dwr21})}{\geq}\underbrace{\lm_>\left(f-\sum_{i=1}^kQ_i'^d\cdot g_i\right)}_{=\lm_>(R'^d)}\overset{\text{(ID1)}}{\geq} \lm_>(q_i''\cdot g_i).
     \end{equation}
     Therefore, the representation
     \begin{displaymath}
       u\cdot f=\sum_{i=1}^k(q_i''+u''\cdot Q_i'^d)\cdot g_i+r, \text{ with } u=u''-q_{k+1}''
     \end{displaymath}
     satisfies (ID1) by (\ref{eq:dwr21}), (\ref{eq:dwr22}), $\lt_>(u'')=1$ and (ID2) by induction.

     To see that $\lt_>(u)=1$, observe that \begin{displaymath}
       \lt_{>_h}(x_0^e\cdot f^h)\in \langle\lt_>(g_1^h),\ldots,\lt_>(g_k^h)\rangle,
     \end{displaymath}
     which is why
     \begin{displaymath}
       \lm_>(f)=\lm_{>_h}(x_0^e \cdot f^h)^d>\lm_{>_h}\left(x_0^e\cdot f^h - \sum_{i=1}^k Q_i'\cdot g_i^h\right)^d=\lm_>(f').
     \end{displaymath}
     Thus $\lm_>(f)>\lm_>(f')\geq\lm_>(q_{k+1}'')\cdot\lm_>(f)$, which necessarily implies $\lm(q_{k+1}'')<1$.
     By induction we get $\lt_>(u)=\lt_>(u'')=1$.
   \end{proof}

   \begin{remark}[polynomial input]\label{rem:polynomialDwR}
     If the input is polynomial, $f,g_1,\ldots,g_k\in R[t,x]^s$, then
     we can regard them as elements of $R\llbracket t' \rrbracket
     [x']=R[t,x]$ with $t'=()$ and $x'=(t,x)$.
     In that case, our homogeneous determinate divisions with
     remainder terminates by Remark \ref{rem:polynomialHDDwR}, and
     hence so does our weak division with remainder. In particular,
     the output $q_1,\ldots,q_k,r$ will be polynomial as well.
   \end{remark}

   The next corollary will prove to be very useful in Theorem
   \ref{thm:standardBasis}, though not for elements in $\Rtx^s$, but
   for elements in $\Rtx^k$ under the Schreyer ordering.

   \begin{corollary}\label{cor:sid2prep}
     Let $>$ be a $t$-local monomial ordering and $g_1,\ldots,g_k\in \Rtx^s$.
     Then any $f \in \Rtx^s$ has a weak division with remainder
     \begin{displaymath}
       u\cdot f=q_1\cdot g_1+\ldots+q_k\cdot g_k+r
     \end{displaymath}
     with $r=\sum_{j=1}^s r_je_j\in\Rtx^s$ satisfying \kurz{(SID2).}
     \lang{\begin{description}
     \item[\rm (SID2)] $\lt_>(r_j\cdot e_j)\notin\langle \lt_>(g_1),\ldots,\lt_>(g_k) \rangle$, unless $r_j=0$, for $j=1,\ldots,s$.
     \end{description}}
   \end{corollary}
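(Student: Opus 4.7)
The plan is to iteratively apply Algorithm~\ref{alg:DwR} to the current remainder, each time further reducing a component that violates (SID2).

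First I would invoke $\DwR(f, (g_1, \ldots, g_k), >)$ to obtain a weak division with remainder $u^{(0)} \cdot f = \sum_{i=1}^k q_i^{(0)} \cdot g_i + r^{(0)}$ satisfying (ID1), (ID2), and $\lt_>(u^{(0)}) = 1$. If $r^{(0)}$ already satisfies (SID2), we are done. Otherwise, I would pick a problematic component, i.e.\ an index $j$ with $r_j^{(0)} \neq 0$ and $\lt_>(r_j^{(0)} \cdot e_j) \in \langle \lt_>(g_1), \ldots, \lt_>(g_k) \rangle$, choosing one for which $\lm_>(r_j^{(0)} \cdot e_j)$ is maximal among all problematic components. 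Applying $\DwR$ to the single-component element $r_j^{(0)} \cdot e_j \in \Rtx^s$ would then yield $u' \cdot r_j^{(0)} \cdot e_j = \sum_{i=1}^k q_i' \cdot g_i + \rho$ satisfying (ID1), (ID2), and $\lt_>(u') = 1$; since $\lt_>(r_j^{(0)} \cdot e_j)$ lies in the leading ideal, this reduction is nontrivial, so $\lm_>(\rho) < \lm_>(r_j^{(0)} \cdot e_j)$.

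Substituting back produces a new weak division with remainder
\begin{displaymath}
u' u^{(0)} \cdot f = \sum_{i=1}^k (u' q_i^{(0)} + q_i') \cdot g_i + \bigl(u' (r^{(0)} - r_j^{(0)} \cdot e_j) + \rho\bigr),
\end{displaymath}
and a short calculation confirms that it still satisfies (ID1), (ID2), and $\lt_>(u' u^{(0)}) = 1$. Crucially, (ID2) on $r^{(0)}$ forces the problematic $j$ to differ from the dominant component, so removing $r_j^{(0)} \cdot e_j$ preserves $\lt_>(r^{(0)})$. Setting $r^{(1)}$ to the new remainder and iterating, I obtain sequences $(u^{(\nu)}), (q_i^{(\nu)}), (r^{(\nu)})$.

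The hard part will be showing this iteration terminates or at least converges. Define $V_\nu := \max\{\lm_>(r_j^{(\nu)} \cdot e_j) : j \text{ is problematic in } r^{(\nu)}\}$. Because the correction $\rho$ only contributes monomials strictly below $V_\nu$, and any component that is non-problematic at $V_\nu$ remains so, the quantity $V_\nu$ is non-increasing and must strictly decrease within at most $s$ successive steps (once every component initially tied at $V_\nu$ has been processed). In the purely polynomial case the well-ordering of $\Mon^s(x)$ yields finite termination outright. In the general mixed setting, after passing to a $t$-local weighted refinement via Lemma~\ref{lem:markwig1}, a slice-by-slice application of Lemma~\ref{lem:markwig2} in the $x$-grading shows that the successive corrections tend to zero in the $\langle t\rangle$-adic topology; hence $(u^{(\nu)}), (q_i^{(\nu)}), (r^{(\nu)})$ are Cauchy, and their limits $u, q_i, r$ deliver the desired weak division with remainder satisfying (SID2).
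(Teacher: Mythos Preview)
Your iterative strategy is natural, and the single-step analysis is essentially correct: after one pass, (ID1) and (ID2) are preserved, the dominant component never changes, and the quantity $V_\nu$ strictly decreases. The difficulty is entirely in the last paragraph, where you pass from ``$V_\nu$ is strictly decreasing'' to ``the process converges''.

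The gap is that Lemmas~\ref{lem:markwig1} and~\ref{lem:markwig2} do not apply here. Lemma~\ref{lem:markwig2} needs $x$-homogeneous elements of \emph{fixed} $x$-degree, and your iterates $r^{(\nu)}$ are neither homogeneous nor of bounded $x$-degree: at each step you replace $r^{(\nu)}$ by $u'_\nu\cdot(r^{(\nu)}-r_j^{(\nu)}e_j)+\rho_\nu$, and the unit $u'_\nu$ produced by $\DwR$ can have positive $x$-degree, so $\deg_x(r^{(\nu)})$ may grow without bound. Worse, $u^{(\nu)}=\prod_{\mu<\nu}u'_\mu\cdot u^{(0)}$ is an infinite product of elements $1+v_\mu$ with $\lm_>(v_\mu)<1$, but nothing in your argument forces $v_\mu\to 0$ in the $\langle t\rangle$-adic topology; without that, neither $u^{(\nu)}$ nor $r^{(\nu)}$ need converge to an element of $\Rtx$ resp.\ $\Rtx^s$. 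The hand-wave ``slice-by-slice in the $x$-grading'' does not repair this, because the slices themselves are moving targets once the $x$-degree is unbounded. (Your claim about the ``purely polynomial case'' is also not quite right: when $m=0$ the ordering $>$ is still allowed to be non-global, so $\Mon^s(x)$ need not be well-ordered.)

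The paper sidesteps all of this by a short induction on $s$ rather than an iteration over problematic components. After one call to $\DwR$, the leading term of $r$ sits in a single component, say $e_s$; projecting $r$ to $\Rtx^{s-1}$ and dividing there (by the $g_i$ whose leading term is \emph{not} in $\Rtx\cdot e_s$) handles the remaining $s-1$ components in one inductive stroke. This requires at most $s$ calls to $\DwR$ in total and no limiting process, so the convergence issue never arises. If you want to keep your iterative viewpoint, the fix is to reduce not one component at a time but the entire projection onto the non-dominant components at once, which is exactly what the induction on $s$ accomplishes.
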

   \begin{proof}
     We make an induction on $s$, in which the base case $s=1$ follows from Algorithm \ref{alg:DwR},
     as condition (SID2) coincides with (ID2).

     Suppose $s>1$. By Algorithm \ref{alg:DwR} there exists a weak division with remainder
     \begin{equation}\label{eq:sid1}
       u\cdot f = q_i\cdot g_1 + \ldots + q_k\cdot g_k + r.
     \end{equation}
     If $r=0$, then the representation satisfies (SID2) and we're
     done. If $r\neq 0$, there is a unique $j\in\{1,\ldots,s\}$ such
     that $\lt_>(r)\in\Rtx\cdot e_j$.
     For sake of simplicity, suppose that $j=s$ and that $g_1,\ldots,g_k$ are ordered in such that
     \begin{displaymath}
       \underbrace{\lt_>(g_1),\ldots,\lt_>(g_l)}_{\notin\Rtx\cdot e_s}, \quad \underbrace{\lt_>(g_{l+1}),\ldots,\lt_>(g_s)}_{\in\Rtx\cdot e_s} \quad \text{ for some } 1\leq l < s.
     \end{displaymath}

     Consider the projection
     \begin{displaymath}
       \sigma: \Rtx^s  \longrightarrow \Rtx^{s-1}, \quad
       (p_1,\ldots,p_s) \longmapsto     (p_1,\ldots, p_{s-1}),
     \end{displaymath}
     the inclusion
     \begin{displaymath}
       \iota: \Rtx^{s-1}   \longrightarrow \Rtx^s, \quad
       (p_1,\ldots, p_{s-1}) \longmapsto     (p_1,\ldots, p_{s-1},0),
     \end{displaymath}
     and let $>_\ast$ denote the restriction of $>$ on $\Mon(t,x)^{s-1}$.
     Note that we have
     \begin{enumerate}[leftmargin=*]
     \item for $h\in \Rtx^{s-1}$: $\lm_>(\iota(h))=\iota(\lm_{>_\ast}(h))$,
     \item for $i=1,\ldots,l$: $\lm_>(g_i)=\lm_>(\iota(\sigma(g_i)))$.
     \end{enumerate}

     By induction, there exists a weak division with remainder of $\sigma(r)\in\Rtx^{s-1}$ satisfying (SID2), say
     \begin{equation}\label{eq:sid2}
       u'\cdot\sigma(r) = q_1'\cdot \sigma(g_1) + \ldots + q_l'\cdot \sigma(g_l) + r'.
     \end{equation}
     Writing $r=\sum_{j=1}^s r_j\cdot e_j$ and $r'=\sum_{j=1}^{s-1}
     r_j'\cdot e_j$, we want to show that the following constructed
     representation
     \begin{displaymath}
       u\cdot u'\cdot f = \sum_{i=1}^l (u'\cdot q_i + q_i') \cdot g_i + \sum_{i=l+1}^k u'\cdot q_i \cdot g_i + r'' \text{ with } r''=\sum_{j=1}^{s-1}r_j'\cdot e_j + r_s\cdot e_s
     \end{displaymath}
     is a weak division with remainder satisfying (SID2). \\

     As (\ref{eq:sid1}) satisfies (ID2), (\ref{eq:sid2}) satisfies (ID1), and $\lt_>(r)\in \Rtx_>\cdot e_s$, we obtain for $i=1,\ldots,l$
     \begin{flushleft}
       $\lm_>(f)\geq\lm_>(r) > \lm_>(\iota (\sigma(r))) \geq \lm_>(\iota (q_i'\cdot \sigma(g_i))) = \ldots$
     \end{flushleft}
     \begin{flushright}
       $\ldots = \lm_>(q_i'\cdot \iota (\sigma(g_i))) = \lm_>(q_i'\cdot g_i).$
     \end{flushright}
     Now since (\ref{eq:sid1}) satisfies (ID1) and $\lt_>(u)=1=\lt_>(u')$, we have for $i\leq l$
     \begin{displaymath}
       \lm_>(u\cdot u'\cdot f) =\lm_>(f) \geq \lm_>((u'\cdot q_i+q_i')\cdot g_i)
     \end{displaymath}
     and for $i>l$
     \begin{displaymath}
       \lm_>(u\cdot u'\cdot f) =\lm_>(f) \geq \lm_>(q_i\cdot g_i) = \lm_>(u'\cdot q_i\cdot g_i),
     \end{displaymath}
     proving that our constructed representation satisfies (ID1).

     Moreover, (SID2) of (\ref{eq:sid2}) tells us that for $j=1,\ldots,s-1$
     \begin{displaymath}
       \lt_{>_\ast}(r_j'\cdot e_j) \notin \langle \lt_{>_\ast}(\sigma(g_1)),\ldots,\lt_{>_\ast}(\sigma(g_l))\rangle, \text{ unless } r_j'=0,
     \end{displaymath}
     And because $\lt_>(g_i) \in \Rtx\cdot e_s$ for $i>l$, we get for $j=1,\ldots,s-1$
     \begin{displaymath}
       \lt_>(r_j'\cdot e_j) \notin \langle \lt_>(g_1),\ldots,\lt_>(g_s)\rangle, \text{ unless } r_j'=0.
     \end{displaymath}
     In addition, by (ID2) of (\ref{eq:sid1}), we have
     \begin{displaymath}
       \lt_>(r_s'\cdot e_s) = \lm_>(r) \notin \langle\lt_>(g_1),\ldots,\lt_>(g_s)\rangle,
     \end{displaymath}
     which completes the proof that our constructed representation satisfies (SID2). By Proposition \ref{prop:dwrs} this implies (ID2).
   \end{proof}


   We will now introduce localizations at monomial orderings. More
   than just a convenience to get rid of the $u$ with $\lm_>(u)=1$ in
   our weak division with remainder, localization at monomial
   orderings allows geometers to compute in localizations at ideals
   generated by variables. It is a technique that has been applied in
   the study of isolated singularities to great success.

   \begin{definition}[Localization at monomial orderings]
     For a $t$-local monomial ordering $>$ on $\Mon(t,x)$, we define
     \begin{displaymath}
       S_>:=\{u\in \Rtx\mid \lt_>(u) = 1\} \text{ and } \Rtx_>:=S_>^{-1}\Rtx.
     \end{displaymath}
     We will refer to $\Rtx_>$ as $\Rtx$ \emph{localized at the monomial ordering $>$}.

     Let $>$ be a module monomial ordering on $\Mon^s(t,x)$. Recall
     that it restricts to the same monomial ordering on $\Mon(t,x)$ in
     each component by Definition~\ref{def:basicsModule}, which we
     will denote by $>_\Rtx$. We then define for any $k\in\NN$
     \begin{displaymath}
       \Rtx_>^s:=S_{>_\Rtx}^{-1}\left(\Rtx^s\right).
     \end{displaymath}
     We will refer to $\Rtx_>^s$ as $\Rtx^s$ \emph{localized at the
       monomial ordering $>$}. For $s=1$, it coincides with the first
     definition.

     Our definitions on $\Rtx^s$ extend naturally to $\Rtx_>^s$, since
     for any element $f\in\Rtx_>^s$ there exists an element $u\in S_>$
     such that $u\cdot f\in\Rtx^s$. We define the \emph{leading
       monomial, leading coefficient} and \emph{leading term} of $f$
     with respect to $>$ to be that of $u\cdot f\in\Rtx^s$. The
     \emph{leading module} of a submodule $M\leq\Rtx_>^s$ is again the
     module generated by the leading terms of its elements.

     And given $f,g_1,\ldots,g_k,r=\sum_{j=1}^s r_j\cdot e_j \in \Rtx_>^s$, we say a representation
     \begin{displaymath}
       f=q_1\cdot g_1+\ldots+q_k\cdot g_k+r
     \end{displaymath}
     satisfies
     \begin{description}
     \item[\rm (ID1)] if $\lm_>(f)\geq\lm_>(q_i\cdot g_i)$ for all $i=1,\ldots,k$,
     \item[\rm (ID2)] if $\lt_>(r)\notin \langle \lt_>(g_1),\ldots,\lt_>(g_k)\rangle_\Rtx$, unless $r=0$,
     \item[\rm (DD1)] if no term of $q_i\cdot \lt_>(g_i)$ lies in $\langle \lt_>(g_j) \mid j<i \rangle_\Rtx$ for all $i=1,\ldots,k$,
     \item[\rm (DD2)] if no term of $r$ lies in $\langle \lt_>(g_1),\ldots,\lt_>(g_k) \rangle$,
     \item[\rm (SID2)] if $\lt_>(r_j\cdot e_j)$ does not lie in $\langle \lt_>(g_1),\ldots,\lt_>(g_k) \rangle_\Rtx$, unless $r_j=0$, for $j=1,\ldots,s$.
     \end{description}
     We will refer to a representation satisfying (ID1) and (ID2) as
     \emph{(indeterminate) division with remainder}, and we will refer
     to a representation satisfying (DD1) and (DD2) as
     \emph{determinate division with remainder}. In each of these two cases
     we call $r$ a \emph{remainder} or \emph{normal form} of $f$ with
     respect to $(g_1,\ldots,g_k)$. Moreover, if the remainder $r$ is
     zero, we call the representation a \emph{standard representation}
     of $f$ with respect to $(g_1,\ldots,g_k)$.
   \end{definition}

   With these notions, Corollary \ref{cor:sid2prep} then implies:

   \begin{corollary}\label{cor:sid2>}
     Let $>$ be a monomial ordering and $g_1,\ldots,g_k\in \Rtx^s_>$.
     Then any $f \in \Rtx^s_>$ has a division with remainder with respect to $g_1,\ldots,g_k$
     satisfying (SID2).
   \end{corollary}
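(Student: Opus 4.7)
The plan is to reduce the claim directly to Corollary~\ref{cor:sid2prep} by clearing denominators. By the very definition of the localization $\Rtx_> = S_>^{-1}\Rtx$, I can pick units $u, u_1, \ldots, u_k \in S_>$ such that $f' := u \cdot f$ and $g_i' := u_i \cdot g_i$ all lie in $\Rtx^s$. Applying Corollary~\ref{cor:sid2prep} to the tuple $(f'; g_1', \ldots, g_k')$ produces a weak division with remainder
\[
v \cdot f' \;=\; q_1' \cdot g_1' + \ldots + q_k' \cdot g_k' + r,
\]
with $v \in \Rtx$ satisfying $\lt_>(v) = 1$, coefficients $q_i' \in \Rtx$ and remainder $r = \sum_j r_j \cdot e_j \in \Rtx^s$ satisfying (ID1), (ID2) and (SID2).

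Since $\lt_>(vu) = \lt_>(v) \cdot \lt_>(u) = 1$, the product $vu$ lies in $S_>$ and is invertible in $\Rtx_>$. Dividing the identity above by $vu$ and substituting $g_i' = u_i \cdot g_i$ yields the candidate representation
\[
f \;=\; \sum_{i=1}^k \frac{q_i'\, u_i}{v u} \cdot g_i + \frac{r}{v u}
\]
in $\Rtx_>^s$, with coefficients $q_i := (q_i'\, u_i)/(vu) \in \Rtx_>$ and remainder $r^* := r/(vu) \in \Rtx_>^s$.

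What remains is to verify that this representation fulfils the localized versions of (ID1) and (SID2), from which (ID2) follows via Proposition~\ref{prop:dwrs}. The key observation, which I would state up front, is that by the very definition of leading monomial, coefficient and term for elements of $\Rtx_>^s$, multiplication or division by any element of $S_>$ leaves all these data unchanged. From this one reads off $\lm_>(f) = \lm_>(f')$, $\lm_>(q_i \cdot g_i) = \lm_>(q_i' \cdot g_i')$, $\lt_>(r^*_j \cdot e_j) = \lt_>(r_j \cdot e_j)$, and $\lt_>(g_i') = \lt_>(u_i) \cdot \lt_>(g_i) = \lt_>(g_i)$. Substituting into (ID1) and (SID2) of the representation in $\Rtx^s$ then immediately yields the corresponding properties of the localized representation.

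The one point requiring a little care, and therefore the likely main obstacle, is purely bookkeeping: one has to make sure that the ideals appearing in the localized (SID2) and (ID2) conditions are indeed the same submodules of $\Rtx^s$ as those in Corollary~\ref{cor:sid2prep}. This is precisely what the identity $\lt_>(g_i') = \lt_>(g_i)$ provides, so once it is in place no subtlety remains and the argument collapses to routine substitution.
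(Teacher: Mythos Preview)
Your proposal is correct and is precisely the argument the paper has in mind: the paper gives no explicit proof, stating only that ``with these notions, Corollary~\ref{cor:sid2prep} then implies'' the result, and your clearing-of-denominators argument is exactly the routine unpacking of that implication. The observation you single out---that multiplication by elements of $S_>$ preserves leading data, so that $\lt_>(g_i') = \lt_>(g_i)$ and the ideals in the (SID2) conditions coincide---is the only substantive point, and you handle it correctly.
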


   \section{Standard bases and syzygies}

   In this section, we introduce standard bases for rings satisfying
   Convention~\ref{con:groundRing}.
   We also incorporate some remarks on possible
   optimizations for $R$ being a principal ideal domain. Similar to
   the classical theory, it opens with introducing the Schreyer
   ordering and syzygies, and finishes with proving Buchberger's
   criterion.

   \begin{definition}\label{def:standardBases}
     Let $>$ be a $t$-local monomial ordering  on $\Mon(t,x)^s$ and
     $M\leq \Rtx^s$ or $M\leq \Rtx_>^s$.  A \emph{standard basis} of
     $M$ with respect to $>$ is a finite set
     $G\subseteq M$ with
     \begin{displaymath}
       \LT_>(G)=\LT_>(M)
     \end{displaymath}
     where  $\LT_>(G):=\langle \LT_>(g)\mid g\in G\rangle$. $G$ is
     simply called a \emph{standard basis} with respect to $>$, if $G$
     is a standard basis of $\langle G\rangle_{\Rtx_>}$ with respect
     to $>$.
   \end{definition}

   With this definition we get the usual results for standard
   bases. We will formulate them, but we will only prove them if
   the proof has to be adjusted due to the fact that the base ring is
   not a field. For the existence of standard bases it is important to
   note, that our base ring is noetherian.

   \begin{proposition}
     For any monomial ordering $>$ all submodules of $\Rtx^s$ and $\Rtx_>^s$ have a standard basis.
   \end{proposition}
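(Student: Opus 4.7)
The plan is to reduce the claim to the noetherianity of the polynomial ring $R[t,x]^s$ via the leading module construction. The whole point of the definition of $\LT_>(M)$ is that it lives inside $R[t,x]^s$, not in the much larger mixed power series and polynomial ring $\Rtx^s$ (or its localization), so the ambient ring for the leading module \emph{is} noetherian even though $\Rtx$ in general is not.

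First I would invoke Convention~\ref{con:groundRing} to note that $R$ is noetherian, and then apply the Hilbert basis theorem $m+n$ times to conclude that $R[t,x] = R[t_1,\ldots,t_m,x_1,\ldots,x_n]$ is noetherian. Hence the free module $R[t,x]^s$ is a noetherian $R[t,x]$-module, and any submodule of it is finitely generated.

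Next I would apply this to the leading module $\LT_>(M)$, which by definition is an $R[t,x]$-submodule of $R[t,x]^s$ whether $M \leq \Rtx^s$ or $M \leq \Rtx_>^s$; in the localized case one uses that every $f\in\Rtx_>^s$ admits some $u\in S_>$ with $u\cdot f\in\Rtx^s$, so $\lt_>(f):=\lt_>(u\cdot f)$ is still a genuine polynomial term. Thus $\LT_>(M)$ is finitely generated, say by $m_1,\dots,m_l\in R[t,x]^s$. By definition $\LT_>(M)$ is generated by the (typically infinite) family $\{\lt_>(f)\mid f\in M\}$, so each $m_i$ can be written as a finite $R[t,x]$-linear combination of leading terms of finitely many elements $f_{ij}\in M$. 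Collecting these finitely many $f_{ij}$ into a set $G\subseteq M$ gives $\LT_>(G)\supseteq\langle m_1,\dots,m_l\rangle_{R[t,x]}=\LT_>(M)$, and the reverse inclusion is automatic; hence $G$ is a finite standard basis.

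There is no real obstacle: the only substantive ingredients are the noetherianity of $R$ and the elementary observation that a module generated by some set $S$ and known to be finitely generated is in fact generated by a finite subset of $S$. Everything else — in particular the possible presence of power series variables $t$ or the passage to the localization $\Rtx_>^s$ — is absorbed into the definition of the leading module as a submodule of the polynomial ring $R[t,x]^s$.
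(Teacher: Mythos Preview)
Your proof is correct and follows essentially the same approach as the paper: use noetherianity to obtain a finite generating set of $\LT_>(M)\leq R[t,x]^s$, then pull back to finitely many elements of $M$. Your pullback step (extracting the finitely many $f_{ij}$ appearing in the expressions of the $m_i$) is in fact cleaner than the paper's, which asserts $\langle\LT_>(g)\mid g\in M\rangle \overset{!}{=}\{\LT_>(g)\mid g\in M\}$ --- read literally this is false (the left side contains non-terms), and what is really being used is that every \emph{term} in $\LT_>(M)$ arises as $\LT_>(g)$ for some $g\in M$, a step your argument sidesteps entirely.
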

   \lang{\begin{proof}
     Let $M\leq \Rtx^s$ resp.~$M\leq\Rtx_>^s$  be a submodule. Since $R$ is noetherian, so are
     $\Rtx^s$ and $\Rtx_>^s$, and $\LT_>(M)\leq \Rtx^s$ has a finite generating set
     $h_1,\ldots,h_k$.
     Because
     \begin{displaymath}
       \LT_>(M)=\langle \LT_>(g)\mid g\in M\rangle \overset{!}{=} \{ \LT_>(g)\mid g\in M \},
     \end{displaymath}
     there exist $g_1,\ldots,g_k$ with $\LT_>(g_i)=h_i$ forming a standard basis of $M$.
   \end{proof}}

   Computing weak normal forms is essential in the standard
   bases algorithm. \lang{While it can be essentially done by computing a
   division with remainder and discarding everything but the
   remainder, as in the following algorithm, the fact that everything
   but the remainder is discarded may be used for some optimization in
   the division algorithm, which we leave out for sake of clarity.}

   \begin{algorithm}[normal form]\label{alg:normalForm} \skipalgorithm
     \begin{algorithmic}[1]
       \REQUIRE{$(f,G,>)$, where $f\in\Rtx$, $G=(g_1,\ldots,g_k)$ a
         $k$-tuple in $\Rtx^s$ and $>$ a $t$-local monomial ordering.}
       \ENSURE{$r=\NF(f,G,>)\in \Rtx$, a normal form of $f$ with respect to $G$ and $>$.}
       \STATE Use Algorithm \ref{alg:DwR} to compute a division with remainder,
       \begin{displaymath}
         (u,(q_1,\ldots,q_k),r)=\DwR(f,G,>).
       \end{displaymath}
       \RETURN{$r$.}
     \end{algorithmic}
   \end{algorithm}

   \begin{remark}[polynomial input]\label{rem:polynomialNormalForm}
     Should the input be polynomial, i.e. $f\in R[t,x]$ and
     $G\subseteq R[t,x]$, then by Remark~\ref{rem:polynomialDwR} we
     automatically obtain a polynomial normal form $\NF(f,G,>) \in
     R[t,x]$.
   \end{remark}

   \begin{convention}
     For the remainder of the section, fix a $t$-local monomial ordering $>$ on $\Mon(t,x)^s$.
   \end{convention}

   \begin{proposition}\label{idealMembership}
     Let $M\leq \Rtx^s$ be a module and let $G=\{g_1,\ldots,g_k\}$ be a standard basis of $M$. Then
     given an element $f\in \Rtx$ and a weak division with remainder
     \begin{displaymath}
       u\cdot f = q_1\cdot g_1+\hdots+q_k\cdot g_k+r,
     \end{displaymath}
     we have $f \in M$ if and only if $r = 0$. In particular, we see that $M=\langle G \rangle$
   \end{proposition}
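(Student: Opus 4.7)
The plan is to prove both directions directly, the key technical fact being that any $u \in \Rtx$ with $\lt_>(u) = 1$ lies in $S_>$ and is therefore a unit in $\Rtx_>$. This is what bridges the weak normal form (which a priori only controls $u \cdot f$) with the actual membership of $f$ itself.

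For the forward direction, I would assume $f \in M$. Since $M$ is an $\Rtx$-submodule, $u \cdot f \in M$, and hence
\begin{displaymath}
   r \;=\; u\cdot f - q_1 g_1 - \cdots - q_k g_k \;\in\; M.
\end{displaymath}
If one had $r \neq 0$, then condition (ID2) from Definition~\ref{def:dwr} would force $\lt_>(r) \notin \langle \lt_>(g_1), \ldots, \lt_>(g_k)\rangle$. But the standard basis hypothesis gives $\lt_>(M) = \lt_>(G) = \langle \lt_>(g_1), \ldots, \lt_>(g_k)\rangle$, and since $r \in M$ we would have $\lt_>(r) \in \lt_>(M)$, a contradiction. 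Hence $r = 0$.

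For the converse, assume $r = 0$, so that $u \cdot f = q_1 g_1 + \cdots + q_k g_k \in \langle G\rangle \subseteq M$. Because $\lt_>(u) = 1$ means $u \in S_>$, the element $u$ is invertible in $\Rtx_>$; multiplying through by $u^{-1}$ gives $f = \sum_i (u^{-1} q_i)\, g_i \in \langle G \rangle_{\Rtx_>}$, which is contained in $M$ (the latter being viewed as a submodule stable under the $S_>$-action, or equivalently a submodule of $\Rtx_>^s$). The ``in particular'' statement $M = \langle G \rangle$ then follows at once: one inclusion is $G \subseteq M$, while for the reverse we apply Algorithm~\ref{alg:DwR} to an arbitrary $f \in M$ to obtain a weak division with remainder, whose remainder must be zero by the direction just proved, placing $f$ in $\langle G \rangle$.

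There is no real obstacle here; the argument is a one-line chase combining (ID2) with the standard basis property. The only point that needs attention is that the invertibility of $u$ lives in the localization $\Rtx_>$, not in $\Rtx$ itself (where, e.g., $u = 1 - x_i$ need not be a unit if the ordering makes $x_i < 1$). This is why the conclusion $M = \langle G \rangle$ must be interpreted as generation in the localized ring, consistent with the convention in Definition~\ref{def:standardBases} that a standard basis generates $\langle G\rangle_{\Rtx_>}$.
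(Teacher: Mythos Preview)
Your proof is correct and follows essentially the same route as the paper's: both directions hinge on (ID2) combined with $\lt_>(M)=\lt_>(G)$ for the forward implication, and on $u$ being a unit in $\Rtx_>$ for the converse and the ``in particular'' clause. Your explicit remark that the equality $M=\langle G\rangle$ must be read in the localized ring is a point the paper leaves implicit, so if anything your write-up is slightly more careful than the original.
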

   \lang{\begin{proof}\
     If $r=0$, then obviously $f\in\langle G \rangle \subseteq J$.
     Conversely, if $f\in J$, then $r=u\cdot f-q_1\cdot g_1+\hdots+q_k\cdot g_k \in J$ and therefore
     $\lt_>(r)\in\LT_>(J)=\LT_>(G)$. Hence $r=0$ by (ID2).

     We obviously have $M \supseteq \langle G \rangle$. For the
     converse, note that $u\in\Rtx_>$ with $\lt_>(u)=1$ is a unit, and
     hence the weak division with remainder implies $M\subseteq
     \langle G \rangle$.
   \end{proof}}

   \begin{proposition}\label{prop:qbuchberger0}
     Let $M$ be a submodule of $\Rtx_>^s$ (resp.~of $\Rtx^s$)
     and let $G=\{g_1,\ldots,g_k\}\subseteq M$. Then the following statements are equivalent:
     \begin{enumerate}[label=\normalfont(\alph*), leftmargin=*]
     \item $G$ is a standard basis of $M$.
     \item Every (weak) normal form of any element in $M$ with respect to $G$ is zero.
     \item Every element in $M$ has a (weak) standard representation with respect to $G$.
     \end{enumerate}
   \end{proposition}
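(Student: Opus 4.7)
The plan is to prove the three implications cyclically, (a) $\Rightarrow$ (b) $\Rightarrow$ (c) $\Rightarrow$ (a). The implication (b) $\Rightarrow$ (c) requires essentially no work, since a weak standard representation is by definition a weak division with remainder whose remainder happens to be zero; if every weak normal form of every element of $M$ is forced to vanish, then every element of $M$ automatically admits a weak standard representation.

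For (a) $\Rightarrow$ (b), I would take an arbitrary $f\in M$ and invoke Algorithm~\ref{alg:DwR} (equivalently Algorithm~\ref{alg:normalForm}) to produce a weak division with remainder $u\cdot f = q_1 g_1 + \cdots + q_k g_k + r$ with $\lt_>(u)=1$ and $r$ satisfying (ID2). The crucial point is that $u\cdot f \in M$ and each $g_i \in M$, so that $r = u\cdot f - \sum_i q_i g_i$ lies in $M$ as well. If one had $r\neq 0$, then by hypothesis (a),
\[ \lt_>(r) \in \LT_>(M) = \LT_>(G) = \langle \lt_>(g_1),\ldots,\lt_>(g_k)\rangle, \]
which directly contradicts (ID2). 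Hence $r=0$. The argument is identical in the localized setting $M\leq \Rtx_>^s$, where $u$ is in fact a unit, and one may invoke Corollary~\ref{cor:sid2>} in place of Algorithm~\ref{alg:DwR}.

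For (c) $\Rightarrow$ (a), the inclusion $\LT_>(G) \subseteq \LT_>(M)$ is immediate from $G\subseteq M$. For the reverse inclusion, given $f\in M$ I would take a weak standard representation $u\cdot f = \sum_{i=1}^k q_i g_i$ with $\lt_>(u)=1$. By (ID1), $\lm_>(f) = \lm_>(u\cdot f) \geq \lm_>(q_i g_i)$ for every $i$. Letting $S$ denote the set of indices at which equality is attained and comparing leading terms on both sides gives
\[ \lt_>(f) \;=\; \lt_>(u\cdot f) \;=\; \sum_{i\in S} \lt_>(q_i)\cdot \lt_>(g_i), \]
which lies in $\LT_>(G)$. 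Hence $\LT_>(M) \subseteq \LT_>(G)$, and equality holds.

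The only subtlety compared to the field case concerns (c) $\Rightarrow$ (a): over a ring the individual leading terms of the $q_i g_i$ with $i\in S$ may cancel, so one cannot extract a single summand equal to $\lt_>(f)$. However, the displayed identity exhibits $\lt_>(f)$ as a sum of elements of $\LT_>(G)$, which is all that is needed. The two settings $M\leq \Rtx^s$ and $M\leq \Rtx_>^s$ are handled by the very same argument, simply replacing the weak division with remainder by an honest division with remainder in the localized case.
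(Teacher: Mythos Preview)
Your proof follows the same cyclic scheme (a)$\Rightarrow$(b)$\Rightarrow$(c)$\Rightarrow$(a) and essentially the same arguments as the paper. Two small points where your write-up deviates:

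For (a)$\Rightarrow$(b) you should start from an \emph{arbitrary} weak division with remainder of $f$ rather than produce one via Algorithm~\ref{alg:DwR}, since (b) quantifies over all weak normal forms; your argument via $r\in M$ and (ID2) then applies verbatim. The existence result you invoke is instead what is needed for (b)$\Rightarrow$(c), which is not quite automatic: one must know that \emph{some} weak division with remainder exists before (b) can force its remainder to vanish. The paper handles this by citing Corollary~\ref{cor:sid2>} at that step.

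For (c)$\Rightarrow$(a), your displayed identity $\lt_>(f)=\sum_{i\in S}\lt_>(q_i)\,\lt_>(g_i)$ tacitly uses $\lt_>(q_i g_i)=\lt_>(q_i)\,\lt_>(g_i)$, which can fail when $R$ has zero divisors (e.g.\ $R=\ZZ/6\ZZ$, $q_i=3$, $g_i=2x+y$). The paper phrases this step more loosely as ``no total cancellation of the leading terms on the right hand side'' without writing the formula explicitly; your version makes the same underlying step visible. Over an integral domain your identity is correct as stated.
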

   \lang{\begin{proof}
     By Proposition \ref{idealMembership} (a) implies (b), and the
     implication (b) to (c) is true by Corollary \ref{cor:sid2>}. And
     if any $f\in J$ has a standard representation
     \begin{displaymath}
       f=q_1\cdot g_1+\ldots +q_k\cdot g_k,
     \end{displaymath}
     then, since $\lm_>(f)\geq\lm_>(q_i\cdot g_i)$ for $i=1,\ldots,k$,
     there can be no total cancellation of the leading terms on the
     right hand side. Hence $\lt_>(f)\in\LT_>(G)$, and (c) implies
     (a).
   \end{proof}}



   Also note that this in particular implies for $x$-homogeneous modules that being a standard basis only depends on the leading monomials.

   \begin{corollary}\label{cor:standardBasisMultipleOrderings}
     Let $G$ be an $x$-homogeneous standard basis of an
     $x$-homogeneous module $M\leq \Rtx$ with respect to $>$. Let $>'$
     be another $t$-local monomial ordering on $\Mon^s(t,x)$ such that
     $\lm_{>'}(g)=\lm_{>}(g)$ for all $g\in G$.
     Then $G$ is also a standard basis of $M$ with respect to $>'$.
   \end{corollary}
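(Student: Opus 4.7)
The plan is to apply the homogeneous determinate division with remainder (Algorithm~\ref{alg:HDDwR}) with respect to the new ordering $>'$ to an arbitrary $x$-homogeneous element of $M$, and to exploit both the standard basis hypothesis for $>$ and the remainder condition (DD2) to force the remainder to vanish.

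First I would observe that the equality $\lm_{>'}(g)=\lm_>(g)$ automatically forces $\lc_{>'}(g)=\lc_>(g)$, since both coefficients are read off the same monomial of $g$; hence $\lt_{>'}(g)=\lt_>(g)$ for every $g\in G$, and consequently
\[ \LT_{>'}(G)=\LT_>(G)=\LT_>(M). \]
Since $\LT_{>'}(G)\subseteq\LT_{>'}(M)$ is automatic, what remains is to show $\lt_{>'}(f)\in\LT_{>'}(G)$ for every $f\in M$. Using the $x$-homogeneity of $M$, I can decompose $f$ into its $x$-homogeneous components, each of which again lies in $M$, and $\lt_{>'}(f)$ coincides with the leading term of one such component. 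It therefore suffices to treat the case where $f$ is $x$-homogeneous, in which case $(f,G,>')$ is a legal input for Algorithm~\ref{alg:HDDwR}.

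Next I would run Algorithm~\ref{alg:HDDwR} on $(f,G,>')$ to produce a representation $f=Q_1g_1+\ldots+Q_kg_k+R$ satisfying (DD1), (DD2) and (DDH) with respect to $>'$. The key step, and the main obstacle, is to show that $R=0$. Since $R=f-\sum Q_ig_i$ lies in $M$, if $R$ were nonzero then the term $\lt_>(R)$ is one of the terms of $R$, and by (DD2) no such term lies in $\LT_{>'}(G)=\LT_>(G)$; but simultaneously $R\in M$ forces $\lt_>(R)\in\LT_>(M)=\LT_>(G)$, a contradiction. The whole argument pivots on this cross-ordering interplay, made possible only by the identification of leading-term modules established in the previous step.

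Once $R=0$, the representation $f=\sum Q_ig_i$ satisfies (DD1) and (DD2), hence also (ID1) by Proposition~\ref{prop:dwrs}. Writing $S=\{i\mid \lm_{>'}(Q_ig_i)=\lm_{>'}(f)\}$, one then gets $\lt_{>'}(f)=\sum_{i\in S}\lt_{>'}(Q_i)\cdot\lt_{>'}(g_i)$, and each summand lies in $\LT_{>'}(G)$. This places $\lt_{>'}(f)$ in $\LT_{>'}(G)$ and completes the argument.
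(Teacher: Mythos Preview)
Your proof is correct, but it takes a mirror-image route to the paper's. Both arguments hinge on the homogeneous determinate division with remainder and on the fact that the conditions (DD1), (DD2) depend only on the leading terms $\lt(g_i)$, which coincide for $>$ and $>'$. The difference is in which ordering you feed to Algorithm~\ref{alg:HDDwR}. The paper runs $\HDDwR$ with respect to the \emph{original} ordering $>$: since $G$ is already a standard basis for $>$, the remainder vanishes immediately (via Proposition~\ref{idealMembership}), and the resulting representation $f=\sum q_ig_i+0$ is then recognised as a determinate division with remainder for $>'$ as well, because (DD1) and (DD2) only see $\lt_>(g_i)=\lt_{>'}(g_i)$. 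One then invokes Proposition~\ref{prop:dwrs} and the standard-representation criterion of Proposition~\ref{prop:qbuchberger0}. You instead run $\HDDwR$ with respect to the \emph{new} ordering $>'$, which forces you to argue separately that $R=0$ via the cross-ordering contradiction $\lt_>(R)\in\LT_>(M)=\LT_>(G)=\LT_{>'}(G)$ versus (DD2). This works, and the cross-ordering trick is neat, but the paper's choice avoids that extra step entirely: once you divide with respect to $>$, the vanishing of the remainder comes for free from the hypothesis.
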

   \begin{proof}
     By Algorithm \ref{alg:HDDwR}, for any $f\in M=\langle G\rangle$
     we can compute a determinate division with remainder $0$ with
     respect to $>$,
     \begin{displaymath}
       f=q_1\cdot g_1+\ldots +q_k\cdot g_k+0.
     \end{displaymath}
     However, since the conditions (DD1) and (DD2) are only dependant
     on $\lm_>(g_i)=\lm_{>'}(g_i)$, this is also a valid determinate
     division with remainder under $>'$. By
     Proposition~\ref{prop:dwrs}, this is in particular a valid
     division with remainder, proving that $G$ is also a standard
     basis with respect to $>'$.
   \end{proof}

   \begin{definition}[Syzygies and Schreyer ordering]\label{re:syz}
     Given a $k$-tuple $G=(g_1,\ldots,g_k)$ in  $\Rtx_>^s$, we define the \emph{Schreyer ordering} $>_S$ on
     $\Mon^k(t,x)$ associated to $G$ and $>$ to be
     \begin{align*}
       &t^\alpha\cdot x^\beta\cdot \varepsilon_i >_S  t^{\alpha'}\cdot x^{\beta'}\cdot \varepsilon_j
       \quad :\Longleftrightarrow \\
       & \qquad  t^\alpha\cdot x^\beta \cdot \lm_>(g_i) >  t^{\alpha'}\cdot x^{\beta'} \cdot \lm_>(g_j) \text{ or } \\
       & \qquad  t^\alpha\cdot x^\beta \cdot \lm_>(g_i) =  t^{\alpha'}\cdot x^{\beta'} \cdot \lm_>(g_j) \text{ and } i>j.
     \end{align*}
     Note that we distinguish between the canonical basis elements
     $e_j$ of the free module $\Rtx_>^s$ and the canonical basis
     elements $\varepsilon_i$ of the free module $\Rtx_>^k$.

     Moreover, observe that $>_S$ and $>$ restrict to the same monomial ordering on $\Mon(t,x)$, so that
     \begin{displaymath}
       \Rtx^k_{>_S}=S_{>_{S,\Rtx}}^{-1}\Rtx^k = S_{>_{\Rtx}}^{-1} \Rtx^k = \Rtx^k_>.
     \end{displaymath}
     We may, therefore, stick with the notation $\Rtx_>^k$ also when
     replacing $>$ by the Schreyer ordering $>_S$.

     Let $\varphi$ denote the substitution homomorphism
     \kurz{\begin{displaymath}
         \varphi: \Rtx_>^k\longrightarrow\Rtx_>^s:\varepsilon_i\mapsto
         g_i.
       \end{displaymath}}
\ifx\langform\ja
     \begin{center}
       \begin{tikzpicture}[description/.style={fill=white,inner sep=2pt}]
         \matrix (m) [matrix of math nodes, row sep=1em,column sep=0em,text height=1.5ex, text depth=0.25ex]
         { \varphi: \Rtx_>^k=\bigoplus_{i=1}^k \Rtx_>\cdot\varepsilon_i &\longrightarrow& \Rtx_>^s=\bigoplus_{j=1}^s \Rtx_> \cdot e_j, \\
           \phantom{\varphi: \Rtx_>^k=\bigoplus_{i=1}^k \Rtx_>\cdot\varepsilon_i} &\longmapsto & \phantom{\Rtx_>^s=\bigoplus_{j=1}^s \Rtx_> \cdot e_j} \\ };
         \node[anchor=east] at (m-2-1.east) {$\varepsilon_i$};
         \node[anchor=west] at (m-2-3.west) {$g_i.$};
       \end{tikzpicture}
     \end{center}
\fi
     We call its kernel the \emph{syzygy module} or simply the \emph{syzygies} of $G$,
     \begin{displaymath}
       \syz (G) := \left\{ \sum_{i=1}^k q_i\cdot\varepsilon_i \in \Rtx_{>_S}^k \suchthat \sum_{i=1}^k q_i\cdot g_i = 0 \right\}.
     \end{displaymath}
     The concept of syzygies is one that can be applied to any ring, and one of the conditions on our ground ring $R$ in Convention \ref{con:groundRing} states that we assume to be able to compute a finite system of generators for the syzygies of our leading coefficients,
     \begin{align*}
       &\syz_R(\lc_>(g_1),\ldots,\lc_>(g_k)) := \\
       &\qquad \{ (c_1,\ldots,c_k)\in R^k\mid c_1\cdot \lc_>(g_1)+\ldots+c_k\cdot \lc_>(g_k)=0 \}.
     \end{align*}
   \end{definition}

   In the case of a base field one constructs certain syzygies of
   a standard basis $G$ with the aid of s-polynomials
   in order to show  that $G$ is a standard basis. In order to
   treat the class of base rings introduced in Convention
   \ref{con:groundRing} we have to replace this set by a more subtle
   set of syzygies which we will now introduce. We will then show in
   Remark \ref{rem:comparison0} and Proposition \ref{prop:syzPID1}
   that in the case of a factorial
   base ring the new set of syzygies coincides with the classical
   one.

   \begin{definition}\label{def:syzTools}
     For a $k$-tuple $G=(g_1,\ldots,g_k)$ in $\Rtx^s$ and a
     fixed index $1\leq l \leq k$, we will now introduce several
     objects which will be of importance in the upcoming theory.

     Recall the notions of divisibility and least common multiple of
     module monomials in Definition \ref{def:divisibilityAndLCM}. We
     denote the set of least common multiples of the leading monomials
     up to and including $g_l$ with
     \begin{displaymath}
       C_l:=\left.\Big\{ \lcm(\LM_>(g_i)\mid i\in J) \suchthat
         J\subseteq\{1,...,k\} \text{ with } \max(J)=l \Big\}\right.\setminus\{0\} .
     \end{displaymath}
     Note that $C_l\subseteq \Rtx\cdot e_\lambda$ for the index $1\leq \lambda\leq s$ such that $\LT_>(g_l)\in \Rtx\cdot e_\lambda$.

     And for a least common multiple $a \in C_l$, we abbreviate the set of all indices $j$ up to $l$ such that $\lm_>(g_j)$ divides it with
     \begin{displaymath}
       J_{l,a} := \left.\Big\{ i\in\{1,\ldots,l\} \suchthat \lm_>(g_i) \text{ divides } a \Big\} \right. .
     \end{displaymath}

     Now given $J_{l,a}$, we can compute a finite generating set for
     the syzygies of the tuple $(\lc_>(g_i))_{i\in J_{l,a}}$, which we
     will temporarily denote with $S_R$. Let $\syz_{R,l,a}$
     be the set of elements of $S_R$ with non-trivial entry in $l$:
     \begin{center}
       \begin{tikzpicture}[description/.style={fill=white,inner sep=2pt}]
         \matrix (m) [matrix of math nodes, row sep=1em,column sep=-0.2em,text height=1.5ex, text depth=0.25ex]
         { \langle &S_R& \rangle_R = \Big\{ (c_i)_{i\in J_{l,a}} \in R^{|J_{l,a}|}\bigmid \sum\nolimits_{i\in J_{l,a}} c_i\cdot\lc_>(g_i)=0 \Big\}, \\ };
         \node [anchor=north,yshift=-0.5cm] (syz) at (m-1-2.south) {$\phantom{{}^{R,l,a}}\syz_{R,l,a}$};
         \node [anchor=west,xshift=-0.2cm] at (syz.east) {$=\Big\{ (c_i)_{i\in J_{l,a}} \in S_R\bigmid c_l\neq 0 \Big\}.\phantom{{}^{l,t^\beta x^\alpha}}$};
         \draw[draw opacity=0] (syz) -- node[sloped,xshift=-0.1cm] {$\subseteq$} (m-1-2);
       \end{tikzpicture}
     \end{center}

     With this, we can write down a finite set of syzygies of the leading terms of the $g_i$ up to and including $\lt_>(g_l)$ with non-trivial entry in $l$,
     \begin{displaymath}
       \syz_l := \left\{ \sum_{i\in J_{l,a}} \frac{c_i\cdot a}{\lm_>(g_i)}\cdot \varepsilon_i \in \Rtx^k \suchthat a\in C_l \text{ and } c\in\syz_{R,l,a} \right\}.
     \end{displaymath}

     For each $\xi'\in\syz_l$, we can then fix a single weak division with remainder of $\varphi(\xi')\in\Rtx^s$ with respect to $g_1,\ldots,g_l$ to obtain
     \begin{displaymath}
       \mathfrak{S}_l:=\left\{ u\cdot \xi'-\sum_{i=1}^k q_i\cdot\varepsilon_i \suchthat \begin{array}{c}
           \xi'\in \syz_l \text{ and } u\cdot \varphi(\xi') = q_1\cdot g_1+\ldots+q_l\cdot g_l + r \\ \text{the fixed weak division with remainder}
         \end{array}\right\}.
     \end{displaymath}
     As $\mathfrak{S}_l$ obviously depends on $G$, we write
     $\mathfrak{S}_{G,l}$ instead whenever $G$ is not clear from the
     context.  Moreover, we abbreviate
     \begin{displaymath}
       \mathfrak{S}^{(G)}:=\mathfrak{S}_{G,|G|}.
     \end{displaymath}
     Also, there is a certain degree of ambiguity in the construction
     of $\mathfrak S_l$ since we are actively choosing generating sets
     and divisions with remainders. Hence whenever we use $\mathfrak
     S_l$, it will represent any possible outcome of our
     construction. For example, when we write $\mathfrak S \subseteq
     \mathfrak{S}_l$ for a set $\mathfrak S\subseteq\Rtx^k_{>_S}$, it
     means that the elements of $\mathfrak S$ are possible outcomes of
     our construction of $\mathfrak{S}_l$.
   \end{definition}

   \begin{remark}[factorial ground rings]\label{rem:comparison0}
     Should $R$ be a factorial ring in which we have a natural notion
     of a least common multiple, then the construction above
     simplifies to extensions of classical techniques.

     Suppose $a\in C_l$ is a least common multiple of various leading
     monomials including $\lm_>(g_l)$. Let $J_{l,a}$ be the set of all
     indices $i$ for which $\lm_>(g_i)$ divides $a$. Then the syzygy
     module of all leading coefficients of $g_i$ with $i\in J_{l,a}$
     is generated by syzygies of the form (see Proposition
     \ref{prop:syzPID1})
     \begin{displaymath}
       \frac{\lcm(\lc_>(g_i),\lc_>(g_j))}{\lc_>(g_i)}\cdot \varepsilon_i - \frac{\lcm(\lc_>(g_i),\lc_>(g_j))}{\lc_>(g_j)}\cdot \varepsilon_j, \text{ with } i,j\in J_{l,a}, i>j.
     \end{displaymath}
     Abbreviating $\lambda_i:=\lc_>(g_i)$, we consequently get
     \begin{displaymath}
       \syz_{R,l,a} = \left\{ \frac{\lcm(\lambda_l,\lambda_i)}{\lambda_l}\cdot \varepsilon_l - \frac{\lcm(\lambda_l,\lambda_i)}{\lambda_i}\cdot \varepsilon_i \suchthat i\in J_{l,a} \right\}.
     \end{displaymath}
     Hence,
     \begin{displaymath}
       \syz_l = \bigcup_{a\in C_l} \left\{ \frac{\lcm(\lambda_l,\lambda_i)\cdot a}{\lt_>(g_l)}\cdot \varepsilon_l - \frac{\lcm(\lambda_l,\lambda_i)\cdot a}{\lt_>(g_i)}\cdot \varepsilon_i \suchthat i\in J_{l,a} \right\}.
     \end{displaymath}
     The definition of the Schreyer ordering $>_S$ now states
     \begin{displaymath}
       \lt_{>_S}\left(\frac{\lcm(\lambda_l,\lambda_i)\cdot a}{\lt_>(g_l)}\cdot \varepsilon_l - \frac{\lcm(\lambda_l,\lambda_i)\cdot a}{\lt_>(g_i)}\cdot \varepsilon_i\right) = \frac{\lcm(\lambda_l,\lambda_i)\cdot a}{\lt_>(g_l)}\cdot \varepsilon_l.
     \end{displaymath}
     Therefore, the module generated by the leading terms of $\syz_l$ is generated by the leading terms of its elements of the form
     \begin{displaymath}
       \frac{\lcm(\lt_>(g_l),\lt_>(g_i))}{\lt_>(g_l)}\cdot \varepsilon_l - \frac{\lcm(\lt_>(g_l),\lt_>(g_i))}{\lt_>(g_i)}\cdot \varepsilon_i \text{ with } l>i\in J_{l,a},
     \end{displaymath}
     which we obtain by setting $a=\lcm(\lm_>(g_l),\lm_>(g_i))$. Note that for $i\notin J_{l,a}$ the expression would just be zero.

     The images of these generators under $\varphi$ are, in the
     classical case of polynomial rings, commonly known as
     \emph{s-polynomials}, and the fixed divisions with remainder,
     which we considered for the definition of $\mathfrak S_l$,
     represent the normal form computations of these s-polynomials
     that are commonly done in the standard basis algorithm (and also
     Buchberger's Algorithm).
     We continue this train of thought in Remark
     \ref{rem:comparison1}.
   \end{remark}

   \begin{proposition}\label{prop:syzPID1} 
     Let $R$ be a factorial ring, and let $c_1,\ldots,c_k\in R$. Then
     \begin{displaymath}
       \syz(c_1,\ldots,c_k)=\left\langle \frac{\lcm(c_i,c_j)}{c_i}\cdot \varepsilon_i - \frac{\lcm(c_i,c_j)}{c_j}\cdot \varepsilon_j \suchthat k\geq i>j\geq 1 \right\rangle.
     \end{displaymath}
   \end{proposition}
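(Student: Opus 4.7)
My plan is to proceed by induction on the tuple length $k$. The base case $k=1$ is immediate since $R$ is a domain, so $\syz(c_1)=0$ for $c_1\neq 0$. For $k=2$, I invoke unique factorization directly: writing $d=\gcd(c_1,c_2)$ and $c_i=d\tilde c_i$ with $\gcd(\tilde c_1,\tilde c_2)=1$, a syzygy $a_1c_1+a_2c_2=0$ reduces after cancellation of $d$ to $a_1\tilde c_1=-a_2\tilde c_2$, and coprimality in the factorial ring forces $\tilde c_2\mid a_1$. Writing $a_1=t\tilde c_2$ then gives $a_2=-t\tilde c_1$, so $(a_1,a_2)=t\bigl(\tfrac{\lcm(c_1,c_2)}{c_1},-\tfrac{\lcm(c_1,c_2)}{c_2}\bigr)$, as required.

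For the inductive step $k\to k+1$, the strategy is to kill the last coordinate of a syzygy $\xi=\sum_{i=1}^{k+1}a_i\varepsilon_i$ using the binomial relations involving $\varepsilon_{k+1}$, and then apply the induction hypothesis to the resulting syzygy of $(c_1,\ldots,c_k)$. Concretely, if I can produce $b_1,\ldots,b_k\in R$ with $a_{k+1}=\sum_{i=1}^k b_i\cdot\tfrac{\lcm(c_i,c_{k+1})}{c_{k+1}}$, then
\begin{displaymath}
\xi \;-\; \sum_{i=1}^{k} b_i\left(\frac{\lcm(c_i,c_{k+1})}{c_i}\varepsilon_i - \frac{\lcm(c_i,c_{k+1})}{c_{k+1}}\varepsilon_{k+1}\right)
\end{displaymath}
has vanishing last entry and induction closes the loop. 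Since the relation $a_{k+1}c_{k+1}=-\sum a_ic_i\in(c_1,\ldots,c_k)$ places $a_{k+1}$ into a colon ideal, the existence of such $b_i$ reduces to proving the ideal identity
\begin{displaymath}
(c_1,\ldots,c_k):(c_{k+1}) \;=\; \left\langle \frac{\lcm(c_1,c_{k+1})}{c_{k+1}},\ldots,\frac{\lcm(c_k,c_{k+1})}{c_{k+1}}\right\rangle.
\end{displaymath}

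The inclusion $\supseteq$ is immediate, since multiplying each generator on the right by $c_{k+1}$ yields $\lcm(c_i,c_{k+1})\in(c_i)\subseteq(c_1,\ldots,c_k)$. For the reverse inclusion I would work locally: at each height-one prime $p$ of $R$ the local ring $R_p$ is a discrete valuation ring (since $R$ is factorial), both sides become principal, and equality collapses to the elementary valuation identity $\max\bigl(0,\min_i v_p(c_i)-v_p(c_{k+1})\bigr)=\min_i\max\bigl(0,v_p(c_i)-v_p(c_{k+1})\bigr)$, verified by splitting on whether some $v_p(c_i)<v_p(c_{k+1})$.

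The main obstacle is then passing from these height-one identifications to the global ideal identity. In a PID every ideal is principal and the DVR computation immediately globalizes. For a general factorial ring, however, the quotient of the left-hand side by the right-hand side need not be supported only on height-one primes — its support can include embedded primes of higher height — and the factorial hypothesis alone may not be strong enough to kill these contributions. Showing that no such extra contributions survive, equivalently that the quotient module in question is divisorial and hence trivial in a factorial ring, is the crux of the argument; I expect this to be the step where a direct proof under the bare hypothesis ``$R$ is factorial'' must either succeed by a genuinely new observation about divisorial quotients in a UFD, or be forced to fall back on additional structural assumptions on $R$.
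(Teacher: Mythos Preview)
Your overall strategy coincides exactly with the paper's: induction on $k$, with the inductive step reduced to the colon-ideal identity
\[
  (c_1,\ldots,c_{k-1}):(c_k)\;=\;\sum_{i<k}(c_i):(c_k)
  \;=\;\Bigl\langle \tfrac{\lcm(c_i,c_k)}{c_k}\ \Big|\ i<k\Bigr\rangle.
\]
The paper simply asserts this identity and moves on; you correctly isolate it as the crux and worry about whether factoriality alone suffices to push the height-one verification to a global one.

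Your worry is justified: the identity, and with it the proposition as stated, fails for general factorial rings. Take $R=K[x,y]$ with $c_1=x$, $c_2=y$, $c_3=x+y$. Then $(x,y):(x+y)=R$ since $x+y\in(x,y)$, whereas $(x):(x+y)+(y):(x+y)=(x)+(y)=(x,y)\ne R$; the discrepancy is supported at the height-two prime $(x,y)$, precisely as your local-to-global analysis predicts. Correspondingly the syzygy $\varepsilon_1+\varepsilon_2-\varepsilon_3$ of $(x,y,x+y)$ does not lie in the module generated by the three pairwise relations, because any $R$-combination of those has $\varepsilon_3$-component in $(x,y)$. The statement and proof are correct when $R$ is a principal ideal domain --- the internal label \texttt{prop:syzPID1} suggests this was the intended hypothesis --- since then every ideal is principal and your DVR computation globalizes trivially; for the paper's applications (leading coefficients in $\ZZ$) this is all that is needed.
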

   \begin{proof}
     We make an induction on $k$ with $k=1,2$ being clear. Now let $k>2$ and consider a syzygy $a:=a_1\cdot \varepsilon_1+\ldots+a_k\cdot \varepsilon_k$. Then
     \begin{displaymath}
       a_k\cdot c_k \in \langle c_1,\ldots,c_{k-1}\rangle,
     \end{displaymath}
     from which we can infer
     \begin{align*}
       a_k \in \langle c_1,\ldots,c_{k-1}\rangle : \langle c_k \rangle &= \langle c_1 \rangle : \langle c_k \rangle + \ldots + \langle c_{k-1}\rangle : \langle c_k \rangle \\
       &= \left\langle \frac{\lcm(c_1,c_k)}{c_k} \right\rangle + \ldots + \left\langle \frac{\lcm(c_{k-1},c_k)}{c_k} \right\rangle \\
     \end{align*}
     Setting
     \begin{displaymath}
       s_{ij}:=\frac{\lcm(c_i,c_j)}{c_i}\cdot \varepsilon_i - \frac{\lcm(c_i,c_j)}{c_j}\cdot \varepsilon_j \quad \text{and} \quad \mu_{ij} := \frac{\lcm(c_i,c_j)}{c_j},
     \end{displaymath}
     we have shown that there are $b_1,\ldots,b_{k-1}\in R$ such that
     \begin{displaymath}
       a_k = b_1\cdot \mu_{k1} + \ldots + b_{k-1}\cdot \mu_{kk-1},
     \end{displaymath}
     so that, by induction,
     \begin{align*}
       a-b_1\cdot s_{k1} + \ldots + b_{k-1}\cdot s_{kk-1} &\in \syz(c_1,\ldots,c_{k-1}) \\ &\quad = \langle s_{ij} \mid k-1\geq i > j \geq 1 \rangle.
     \end{align*}
     Hence,
     \begin{displaymath}
       a\in \langle s_{ij} \mid k-1\geq i > j \geq 1 \rangle + \langle s_{k1},\ldots,s_{kk-1}\rangle. \qedhere
     \end{displaymath}
   \end{proof}

   We now come back to the general case that $R$ is a  noetherian ring
   in which linear equations are solvable. For the objects in
   Definition \ref{def:syzTools} the following holds:

   \begin{lemma}\label{lem:ltS}
     For any $a\in C_l$ and any $(c_i)_{i\in J_{l,a}}\in syz_{R,l,a}$ there exists a $\xi \in \mathfrak{S}_l$ such that
     \begin{displaymath}
       \LT_{>_S}(\xi) = \frac{c_l\cdot a}{\LM_>(g_l)} \cdot \varepsilon_l.
     \end{displaymath}
   \end{lemma}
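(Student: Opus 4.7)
The plan is to take the obvious candidate $\xi' := \sum_{i\in J_{l,a}} \frac{c_i \cdot a}{\LM_>(g_i)} \cdot \varepsilon_i \in \syz_l$ and first verify that its leading term under the Schreyer ordering is exactly the one claimed. Each summand $\frac{c_i\cdot a}{\LM_>(g_i)}\cdot \varepsilon_i$ (with $c_i\neq 0$) carries module monomial $\frac{a}{\LM_>(g_i)}\cdot \varepsilon_i$, and under $>_S$ these are compared first by $\frac{a}{\LM_>(g_i)}\cdot\LM_>(g_i) = a$, so they all tie in the first coordinate of the Schreyer comparison. The tiebreaker ``larger index wins'' in the definition of $>_S$, together with the defining property $c_l\neq 0$ of $\syz_{R,l,a}$, then forces $\LT_{>_S}(\xi')=\frac{c_l\cdot a}{\LM_>(g_l)}\cdot \varepsilon_l$.

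Next I would apply Algorithm~\ref{alg:DwR} to $\varphi(\xi')$ to obtain the fixed weak division with remainder $u\cdot\varphi(\xi')=q_1\cdot g_1+\cdots+q_l\cdot g_l+r$ with $\lt_>(u)=1$. The key observation is that the coefficient of the monomial $a$ in $\varphi(\xi')=\sum_{i\in J_{l,a}}\frac{c_i\cdot a}{\LM_>(g_i)}\cdot g_i$ equals $\sum_{i\in J_{l,a}} c_i\cdot\lc_>(g_i)=0$ by the syzygy condition, so $\lm_>(\varphi(\xi'))<a\cdot e_\lambda$, where $\lambda$ denotes the component of $\LM_>(g_l)$. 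Property (ID1) will then yield $\lm_>(q_i)\cdot\LM_>(g_i)=\lm_>(q_i\cdot g_i)\leq\lm_>(\varphi(\xi'))<a$ for every $i$ in the support of $q_i$.

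Finally, setting $\xi:=u\cdot\xi'-\sum_{i=1}^k q_i\cdot\varepsilon_i \in \mathfrak{S}_l$, it remains to check that $\LT_{>_S}(\xi')$ survives in $\xi$. Writing $u=1+\tail_>(u)$, every monomial of $\tail_>(u)$ is strictly smaller than $1$, so each term of $\tail_>(u)\cdot\xi'$ has Schreyer weight of the form $m\cdot a$ with $m<1$, strictly below $a$. Combined with the estimate on $\lm_>(q_i)\cdot\LM_>(g_i)$ above, every term of $u\cdot\xi'-\xi'-\sum q_i\cdot\varepsilon_i$ is strictly $<_S$-smaller than $\LT_{>_S}(\xi')$, so no cancellation can affect this leading term, giving $\LT_{>_S}(\xi)=\frac{c_l\cdot a}{\LM_>(g_l)}\cdot\varepsilon_l$. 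The only genuinely delicate step is this final book-keeping that rules out cancellation of the prescribed leading term; the rest follows directly from the definitions of $\syz_l$, $>_S$ and $\mathfrak{S}_l$.
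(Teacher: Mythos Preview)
Your proposal is correct and follows essentially the same route as the paper's proof: identify $\xi'$, compute $\LT_{>_S}(\xi')$ via the Schreyer tie-breaker using $l=\max J_{l,a}$ and $c_l\neq 0$, observe that the syzygy condition forces $\lm_>(\varphi(\xi'))<a$, invoke (ID1) to bound the $q_i\cdot\varepsilon_i$ terms, and conclude. The only cosmetic difference is that you spell out the decomposition $u=1+\tail_>(u)$ to justify $\LT_{>_S}(u\cdot\xi')=\LT_{>_S}(\xi')$, whereas the paper absorbs this into a single line using $\lt_>(u)=1$.
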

   \begin{proof} By construction in Definition \ref{def:syzTools}, for any $a\in C_l$ and any $(c_i)_{i\in J_{l,a}}\in syz_{R,l,a}$, there exists a $\xi\in \mathfrak{S}_l$ of the form
     \begin{displaymath}
       \xi = u\cdot \xi' - \sum_{i=1}^k q_i\cdot\varepsilon_i = \sum\nolimits_{i\in J_{l,a}}\frac{c_i\cdot a}{\lm_>(g_i)}\cdot\varepsilon_i -\sum_{i=1}^l q_i\cdot\varepsilon_i.
     \end{displaymath}

     First, recall that $J_{l,a}$ is the set of indices $i$ up to $l$ for which $\lm_>(g_i)$ divides $a$. Hence for all $i,j\in J_{l,a}$ we have
     \begin{displaymath}
       \lm_>\Big(\underbrace{\frac{c_i\cdot a}{\lm_>(g_i)}}_{\neq 0}\cdot g_i\Big)=a=\lm_>\Big(\underbrace{\frac{c_j\cdot a}{\lm_>(g_j)}}_{\neq 0}\cdot g_j\Big).
     \end{displaymath}
     As an immediate consequence, we get
     \begin{equation}
       \label{eq:lem1}
       \lt_{>_S}\Big(\sum\nolimits_{i\in J_{l,a}}\frac{c_i\cdot a}{\lm_>(g_i)}\cdot\varepsilon_i\Big) = \frac{c_l\cdot a}{\lm_>(g_l)}\cdot\varepsilon_l,
     \end{equation}
     because the Schreyer ordering prefers the highest component in case of a tie, and $l=\max J_{l,a}$, $c_l\neq 0$ by definition.

     Next, recall that $(c_i)_{i\in J_{l,a}}\in\syz_R(\lc_>(g_i)\mid i\in J_{l,a})$, which means that
     \begin{displaymath}
       \sum_{i\in J_{l,a}} \frac{c_i\cdot a}{\lm_>(g_i)}\cdot \LT_>(g_i)=\sum_{i\in J_{l,a}} c_i \lc_>(g_i)\cdot a \overset{!}{=} 0.
     \end{displaymath}
     Therefore, for all $j\in J_{l,a}$,
     \begin{displaymath}
       \lm_>\Big(\frac{c_j\cdot a}{\lm_>(g_j)}\cdot g_j\Big) > \lm_>\Big(\sum_{i\in J_{l,a}} \frac{c_i\cdot a}{\lm_>(g_i)}\cdot g_i\Big) = \lm_>(\varphi(\xi))
     \end{displaymath}
     as all summands have the same leading monomial $a$ and the leading terms in the sum cancel each other out.

     Finally, recall that $\varphi(\xi) = q_1\cdot g_1+\ldots+q_l\cdot g_l + r$ was a division with remainder, whose (ID1) property implies for all $j\in J_{l,a}$ and $i=1,\ldots,l$
     \begin{displaymath}
       \lm_>\Big(\frac{c_j\cdot a}{\lm_>(g_j)}\cdot g_j\Big) > \lm_>(\varphi(\xi))\overset{\text{(ID1)}}{\geq}\lm_>(q_i\cdot g_i).
     \end{displaymath}
     Thus we have for all $j\in J_{l,a}$ and $i=1,\ldots,l$
     \begin{equation}
       \label{eq:lem2}
       \lm_{>_S}\Big(\frac{c_j\cdot a}{\lm_>(g_j)}\cdot \varepsilon_j\Big) >_S \lm_{>_S}(q_i\cdot \varepsilon_i).
     \end{equation}

     Together, we obtain
     \begin{align*}
       \lt_{>_S}(\xi) & \overset{\phantom{(\ref{eq:lem2})}}{=} \lt_{>_S}\Big(u\cdot \sum_{j\in J_{l,a}}\frac{c_j\cdot a}{\lm_>(g_j)}\cdot\varepsilon_j -\sum_{i=1}^l q_i\cdot\varepsilon_i\Big) \\[0.5em]
       & \overset{(\ref{eq:lem2})}{=} \lt_{>_S}\Big(u\cdot  \sum_{j\in J_{l,a}}\frac{c_j\cdot a}{\lm_>(g_j)}\cdot\varepsilon_j\Big) \overset{(\ref{eq:lem1})}{=} \frac{c_l\cdot a}{\lm_>(g_l)}\cdot\varepsilon_l. \qedhere
     \end{align*}
   \end{proof}

   \begin{theorem}\label{thm:standardBasis}
     Let $G=(g_1,\ldots,g_k)$ be a $k$-tuple of elements in $\Rtx^s$
      and let $\mathfrak{S}_1,\ldots,\mathfrak{S}_k$ be
     constructed as in Definition \ref{def:syzTools}. Suppose there exists an
     $\mathfrak{S}\subseteq\bigcup_{l=1}^k \mathfrak{S}_l$ such that
     $\LT_{>_S}(\mathfrak{S})=\LT_{>_S}(\bigcup_{l=1}^k \mathfrak{S}_l)$
     and $\varphi(\xi) = 0$ for all $\xi\in \mathfrak{S}$.
     Then
     $G$ is a standard basis  with respect to $>$
     and $\mathfrak{S}$ is a standard basis of $\syz(G)$ with respect to $>_S$.
   \end{theorem}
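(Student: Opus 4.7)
My plan is to adapt the classical Buchberger--Schreyer argument to the ring-coefficient setting. Both assertions reduce to establishing the chain of equalities
\[
\LT_{>_S}(\mathfrak{S}) \;=\; \LT_{>_S}\Big(\bigcup_l \mathfrak{S}_l\Big) \;=\; \LT_{>_S}(\syz_{\Rtx}(\LT_>(G))) \;=\; \LT_{>_S}(\syz(G)),
\]
in which the first equality is the hypothesis and the other two are the main content.

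\textbf{Step 1 (Leading terms of leading-term syzygies).} I would first establish the middle equality, where $\syz_{\Rtx}(\LT_>(G))$ denotes the $\Rtx$-module of syzygies of the tuple $(\LT_>(g_1), \ldots, \LT_>(g_k))$. The inclusion $\subseteq$ is immediate because each $\xi \in \mathfrak{S}_l$ shares its Schreyer leading term with its underlying $\xi' \in \syz_l$ (see the proof of Lemma~\ref{lem:ltS}), which is itself a syzygy of leading terms. For the reverse inclusion I would take an arbitrary $\sigma = \sum p_i \varepsilon_i \in \syz_{\Rtx}(\LT_>(G))$, read off $\lt_{>_S}(\sigma) = c_l \cdot t^\gamma x^\delta \cdot \varepsilon_l$, and identify the set $I$ of indices contributing to the Schreyer-maximum module monomial $A := t^\gamma x^\delta \cdot \lm_>(g_l)$. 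By the Schreyer tiebreak rule $\max I = l$, so $a := \lcm(\lm_>(g_i) \mid i \in I) \in C_l$; the contributing coefficients (extended by zero over $J_{l,a} \setminus I$) form an element of $\syz_{R,l,a}$, and Lemma~\ref{lem:ltS} supplies $\xi \in \mathfrak{S}_l$ with $\lt_{>_S}(\sigma) = (A/a) \cdot \lt_{>_S}(\xi)$.

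\textbf{Step 2 ($G$ is a standard basis).} Using Step~1 and the hypothesis, I would show that every $f \in \langle G \rangle$ admits a standard representation. Given $f = \sum q_i g_i$ and $q := \sum q_i \varepsilon_i \in \Rtx^k$, if the Schreyer leading term of $q$ already realises $\lt_>(f)$ under $\varphi$ we are done. Otherwise top-level cancellation delivers a syzygy $\sigma \in \syz_{\Rtx}(\LT_>(G))$ with $\lt_{>_S}(\sigma) = \lt_{>_S}(q)$; by Step~1 and the hypothesis there exist $\xi \in \mathfrak{S}$ and a monomial $h$ with $\lt_{>_S}(h\xi) = \lt_{>_S}(q)$, and since $\varphi(\xi) = 0$ the substitution $q \mapsto q - h\xi$ preserves $\varphi(q) = f$ while strictly lowering $\lt_{>_S}(q)$. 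Iterating and passing to the $\langle t \rangle$-adic limit after reducing to a weighted ordering, following the template of Lemmata~\ref{lem:markwig1} and~\ref{lem:markwig2}, yields the desired standard representation. Proposition~\ref{prop:qbuchberger0} then certifies $G$ as a standard basis of $\langle G \rangle$.

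\textbf{Step 3 ($\mathfrak{S}$ generates $\syz(G)$).} With $G$ known to be a standard basis, I would close the last equality. The inclusion $\LT_{>_S}(\syz(G)) \subseteq \LT_{>_S}(\syz_{\Rtx}(\LT_>(G)))$ holds because the Schreyer-top portion of any syzygy of $G$ is forced to be a syzygy of the leading terms (the contributing leading coefficients must sum to zero in $R$). Conversely, given $\sigma \in \syz_{\Rtx}(\LT_>(G))$ the image $\varphi(\sigma) \in \langle G \rangle$ has a standard representation $\varphi(\sigma) = \sum \tilde q_i g_i$ by Step~2, and $\sigma - \sum \tilde q_i \varepsilon_i \in \syz(G)$ shares its Schreyer leading term with $\sigma$. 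Combined with Step~1 this yields $\LT_{>_S}(\mathfrak{S}) = \LT_{>_S}(\syz(G))$, and a second application of Proposition~\ref{prop:qbuchberger0} in the Schreyer module identifies $\mathfrak{S}$ as a standard basis of $\syz(G)$.

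The principal obstacle is the iterative reduction in Step~2: because $>_S$ is merely $t$-local, the chain of Schreyer leading terms obtained by successive substitutions need not stabilise in finitely many steps, and the standard representation must be built as a $\langle t \rangle$-adic limit exactly as in the convergence argument for Algorithm~\ref{alg:HDDwR}. The combinatorial content of Step~1 (matching the data of $C_l$ and $J_{l,a}$ to an arbitrary leading-term syzygy) is conceptually routine but must be tracked carefully to ensure $a \in C_l$ with $\max J_{l,a} = l$.
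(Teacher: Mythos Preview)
Your overall strategy is the right one and closely parallels the paper's Schreyer-type argument, but Step~2 contains a genuine gap that the paper handles differently.

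The problem is the convergence of your iterative reduction $q\mapsto q-h\xi$. You propose to pass to a $\langle t\rangle$-adic limit ``following the template of Lemmata~\ref{lem:markwig1} and~\ref{lem:markwig2}'', but both lemmata are stated only for $x$-homogeneous elements (Lemma~\ref{lem:markwig1} to replace $>$ by a weighted order, Lemma~\ref{lem:markwig2} to force $t$-adic convergence of a sequence of \emph{fixed $x$-degree}). Neither the elements $\xi\in\mathfrak S$ nor the successive $q$'s are $x$-homogeneous in general, and there is no bound on the $x$-degree appearing along the reduction, so a strictly decreasing chain of Schreyer leading monomials need not converge $\langle t\rangle$-adically. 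This is precisely the obstruction that forces the Mora--ecart machinery in Algorithm~\ref{alg:DwR} and is not something you can recover from Lemmata~\ref{lem:markwig1}--\ref{lem:markwig2} alone.

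The paper sidesteps this entirely: instead of reducing $q$ iteratively, it invokes Corollary~\ref{cor:sid2>} once to obtain a division with remainder $\chi=\sum_{\xi\in\mathfrak S} a_\xi\,\xi+r$ satisfying (SID2), and then argues directly with the remainder $r$. All convergence issues are thus absorbed into the already-established division theorem. With $\varphi(\xi)=0$ one gets $g=\varphi(r)=\sum r_i g_i$, and the contradiction argument (your Step~1 content, essentially the paper's use of Lemma~\ref{lem:ltS}) shows this must be a standard representation; the syzygy statement drops out of the same computation when $g=0$. A minor related point: in your Step~1 the contributing coefficient tuple lies in the syzygy module $\syz_R(\lc_>(g_i)\mid i\in J_{l,a})$, not in the finite generating set $\syz_{R,l,a}$, so you only conclude $\lt_{>_S}(\sigma)\in\LT_{>_S}(\mathfrak S_l)$ as a module element, not that it is a single monomial multiple of some $\lt_{>_S}(\xi)$; this is harmless for the module equality but would also need adjusting in your Step~2 subtraction.
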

   \begin{proof}
     Let $q_1,\ldots,q_k\in\Rtx_>=\Rtx_{>_S}$ be chosen
     arbitrarily. We will proof both statements simultaneously via the
     standard representation criteria in Proposition
     \ref{prop:qbuchberger0} (c), by considering
     \begin{displaymath}
       \chi := \sum_{i=1}^k q_i\cdot \varepsilon_i \quad \text{ and } \quad g := \varphi(\chi)=\sum_{i=1}^k q_i\cdot g_i.
     \end{displaymath}
     Here $g$ represents an arbitrary element of $M$, and, in case $g=0$, $\chi$ represents an arbitrary element of $\syz(G)$.

     First compute a division with remainder of $\chi$ with respect to $\mathfrak{S}$ and the Schreyer ordering,
     \begin{displaymath}
       \chi = \sum_{\xi\in \mathfrak{S}} a_{\xi} \cdot \xi + r.
     \end{displaymath}
     Should $r$ be zero, then the expression above is a standard
     representation of $\chi$ with respect to $>_S$. Moreover, as
     $\varphi(\xi)=0$ for all $\xi\in \mathfrak{S}$ by assumption,
     $g=\varphi(\chi)=0$ trivially possesses a standard
     representation. Hence, in case $r=0$, both $g$ and $\chi$ satisfy
     the standard representation criteria. So suppose $r\neq 0$ for
     the remainder of the proof.

     By Corollary \ref{cor:sid2>}, we may assume that our division with remainder satisfies (SID2), i.e. say
     \begin{equation}
       r = r_1\cdot\varepsilon_1+\ldots+r_k\cdot\varepsilon_k \text{ with } \LT_>(r_i\cdot\varepsilon_i)\notin \LT_{>_S}(\mathfrak{S}) \text{ for all } i=1,\ldots,k.
       \label{eq:Schreyer1}
     \end{equation}
     Since by assumption $\varphi(\xi)=0$ for all $\xi\in \mathfrak{S}$, we have
     \begin{equation}
       g = \varphi(\chi) = \varphi(r) = r_1\cdot g_1+\ldots+r_k\cdot g_k.
       \label{eq:Schreyer2}
     \end{equation}
     To proof the statement for $G\subseteq M$, it suffices to show
     that the expression above is a standard representation of $g$. To
     proof the statement for $\mathfrak{S}\subseteq\syz(G)$, we will
     show that $r\neq 0$ contradicts $g=0$. This leaves $r=0$ as the
     only viable case, assuming $g=0$, for which we have already
     established that $\chi$ satisfies the standard representation
     criteria.

     Now assume that $\lm_>(g)<\lm_>(r_i\cdot g_i)$ for some
     $i=1,\ldots,k$, and hence for $J:=\{i\in\{1,\ldots,k\}\mid
     \lm_>(r_i\cdot g_i) \text{ maximal}\}$
     \begin{displaymath}
       \sum_{i\in J} \lt_>(r_i\cdot g_i) = 0.
     \end{displaymath}
     Set $l:=\max(J)$ and $a:=\lcm(\lm_>(g_i)\mid i\in J)$, so that
     obviously $J\subseteq J_{l,a}$. We will now concentrate on
     $r_l\cdot \varepsilon_l$.

     For the leading coefficient of $r_l\cdot \varepsilon_l$, note
     that the leading coefficients sum up to zero, i.e. $\sum_{i\in J}
     \lc_>(r_i)\cdot\varepsilon_i\in\syz(\lc_>(g_i)\mid i\in
     J_{l,a})$. Recall that $\syz_{R,l,a}$ are the elements of a
     generating system of $\syz(\lc_>(g_i)\mid i\in J_{l,a})$ with
     non-trivial entry in $l$. Hence there are suitable $d_{(c_i)}\in
     R$ such that
     \begin{equation}
       \lc_>(r_l)\cdot \varepsilon_l = \sum_{(c_i)\in\syz_{l,a}} d_{(c_i)} \cdot c_l\cdot \varepsilon_l.
       \label{eq:Schreyer3}
     \end{equation}
     For the leading monomial of $r_l\cdot \varepsilon_l$, note that
     $\lm_>(r_l\cdot g_l)$ is divisible by $\lm_>(g_i)$ for all $i\in
     J$. Hence it is divisible by $a=\lcm(\lm_>(g_i)\mid i\in J)$,
     i.e. there exists a $t^\delta x^\gamma$ such that $\lm_>(r_l\cdot
     g_l)= t^\delta x^\gamma \cdot a$, or equivalently
     \begin{equation}
       \lm_>(r_l) = t^\delta x^\gamma \cdot \frac{a}{\lm_>(g_l)}.
       \label{eq:Schreyer4}
     \end{equation}
     Now, by the previous Lemma \ref{lem:ltS} there exists a $\xi_{(c_i)}\in \mathfrak{S}_l$ for any $(c_i)\in\syz_{R,l,a}$ such that
     \begin{equation}
       \LT_{>_S}(\xi_{(c_i)}) = \frac{c_l\cdot a}{\LM_>(g_l)}\cdot \varepsilon_l.
       \label{eq:Schreyer5}
     \end{equation}
     Piecing everything together, we thus get
     \begin{align*}
       \lt_>(r_l)\cdot\varepsilon_l &\overset{(\ref{eq:Schreyer3})+(\ref{eq:Schreyer4})}{=} t^\delta x^\gamma \sum_{(c_i)\in\syz_{l,a}} d_{(c_i)}\cdot \frac{c_l\cdot a}{\lm_>(g_l)} \cdot \varepsilon_l \\
       &\underset{\phantom{(\ref{eq:Schreyer3})+(\ref{eq:Schreyer4})}}{\overset{(\ref{eq:Schreyer5})}{=}} t^\delta x^\gamma \sum_{(c_i)\in\syz_{l,a}} d_{(c_i)}\cdot \LT_{>_S}(\xi_{(c_i)}) \in \LT_{>_S}(\mathfrak{S}_l).
     \end{align*}
     And since $\LT_{>_S}(\mathfrak{S}_l)\subseteq
     \LT_{>_S}(\mathfrak{S})$ by our first assumption, this
     contradicts the (SID2) condition in Equation
     (\ref{eq:Schreyer1}). Therefore, Equation (\ref{eq:Schreyer2})
     has to be a standard representation, implying that $G$ is a
     standard basis of $M$ with respect to $>$.

     Moreover, since $r\neq 0$, Equation (\ref{eq:Schreyer2}) being
     standard representation yields an obvious contradiction if
     $g=0$. Hence in the case $g=0$, we have $r=0$ and we have already
     seen how this implies that $\mathfrak{S}$ is a standard basis of
     $\syz(G)$ with respect to $>_S$.
   \end{proof}

   \begin{remark}[factorial rings continued]\label{rem:comparison1}
     Suppose again that $R$ is a factorial ring. We have seen in
     Remark \ref{rem:comparison0}, that the leading module of
     $\bigcup_{l=1}^k\mathfrak S_{G,l}$ is generated by the leading
     terms of elements of the form
     \begin{displaymath}
       \frac{\lcm(\lt_>(g_i),\lt_>(g_j))}{\lt_>(g_i)}\cdot \varepsilon_i - \frac{\lcm(\lt_>(g_i),\lt_>(g_j))}{\lt_>(g_j)}\cdot \varepsilon_j, \; i>j.
     \end{displaymath}
     They are, thus,  the only elements we need to keep track of for Theorem \ref{thm:standardBasis}.
     These elements are obviously characterized by pairs of distinct
     elements $(g_i,g_j)$ that is, by elements in a so-called
     \emph{pair-set}, which commonly appear in the classical standard
     basis algorithm and in Buchberger's Algorithm.
   \end{remark}

   \begin{algorithm}[standard basis algorithm]\label{alg:standardBases} \skipalgorithm
     \begin{algorithmic}[1]
       \REQUIRE $(G,>)$, where $G=(g_1,\ldots,g_k)$ be a $k$-tuple of
       elements in $\Rtx^s$ generating $M\leq\Rtx^s$ and $>$ a $t$-local monomial ordering on $\Mon^s(t,x)$.
       \ENSURE $G'\subseteq M$ a standard basis of $M$ with respect to $>$.
       \STATE Pick $\mathfrak{S}\subseteq\bigcup_{l=1}^k \mathfrak{S}_{G,l}\subseteq\Rtx^{k}$ such that
       \begin{displaymath}
         \LT_{>_S}(\mathfrak{S})=\LT_{>_S}\Big(\bigcup_{l=1}^k \mathfrak{S}_{G,l}\Big),
       \end{displaymath}
       where $>_S$ is the Schreyer ordering on $\Mon^{k}(t,x)$ associated to $G$ and $>$.
       \WHILE{$\mathfrak{S}\neq\emptyset$}
       \STATE Set $k:=|G|$, so that $G:=\{g_1,\ldots,g_k\}$ and $\mathfrak{S}\subseteq\Rtx_>^k$.
       \STATE Choose $q=\sum_{i=1}^k q_i\cdot\varepsilon_i \in \mathfrak{S}$.
       \STATE Set $\mathfrak{S}:=\mathfrak{S}\setminus\{q\}$.
       \STATE Compute a weak normal form $r$ of $q_1\cdot g_1+\ldots+q_k\cdot g_k$ with respect to $G$
       \begin{displaymath}
         r:=\NF_>(q_1\cdot g_1+\ldots+q_k\cdot g_k, G,>).
       \end{displaymath}
       \IF{$r\neq 0$}
       \STATE Set $g_{k+1}:=r$.
       \STATE Set $G:=G\cup\{g_{k+1}\}$.
       \STATE Pick $\mathfrak{S}'\subseteq \mathfrak{S}^{(G)}\subseteq\Rtx^{k+1}$ such that
       \begin{displaymath}
         \LT_{>_S}(\mathfrak{S}')=\LT_{>_S}\Big(\mathfrak{S}^{(G)}\Big),
       \end{displaymath}
       where $>_S$ is the Schreyer ordering on $\Mon^{k+1}(t,x)$ induced by the newly extended $G$ and $>$.
       \STATE Set $\mathfrak{S}:= (\mathfrak{S}\!\times\!\{0\}) \cup \mathfrak{S}'$.
       \ENDIF
       \ENDWHILE
       \RETURN{$G$.}
     \end{algorithmic}
   \end{algorithm}
   \begin{proof} Label all objects in the $\nu$-th iteration of the while loop with a subscript $\nu$. That is, to be more precise,
     \begin{itemize}[leftmargin=*]
     \item $G_\nu$ as it exists in Step $4$,
     \item $k_\nu$ as it exists in Step $4$,
     \item $q_\nu$ as chosen in Step $5$
     \item $r_\nu$ as computed in Step $7$,
     \item $\mathfrak S_\nu$ as $\mathfrak S$ exists in Step $4$,
     \item $\mathfrak S'_{\nu+1}$ as $\mathfrak S'$ exists in Step $9$ if $r_{\nu-1}\neq 0$, $\mathfrak S'_{\nu+1}=\emptyset$ otherwise, $\mathfrak S_1':=\mathfrak S_1$,
     \end{itemize}
     so that
     \begin{displaymath}
       G_{\nu+1} = G_{\nu} \cup \{ r_\nu\} \text{ and } \mathfrak{S}_{\nu+1} = (\mathfrak{S}_\nu \!\times\!\{0\}) \cup \mathfrak{S}'_{\nu+1}.
     \end{displaymath}

     \emph{Termination.}
     Note that we have a nested sequence of modules
     \begin{displaymath}
       \LT_>(G_{1})\subseteq\LT_>(G_{2})\subseteq\LT_>(G_{3})\subseteq \hdots \subseteq \LT_>(G_{\nu})\subseteq\LT_>(G_{\nu+1}) \subseteq \hdots,
     \end{displaymath}
     which has to stabilize at some point.
     Because $r_\nu\neq 0$ implies
     $\LT_>(G_{\nu})\subsetneq\LT_>(G_{\nu+1})$, it means that our
     sets $\mathfrak S_\nu$ have to be strictly decreasing in every
     step beyond the point of stabilization.
     And since all $\mathfrak S_\nu$ are finite, our algorithm terminates eventually.

     \emph{Correctness.} Let $N$ be the total number of iterations, and let $G$ be the return value, $k:=|G|$.
     We will prove that $G$ is a standard basis by constructing a set
     $\mathfrak S\subseteq\Rtx^k$ that satisfies the two conditions in
     Theorem \ref{thm:standardBasis}. For that, consider all
     $\mathfrak S_\nu\subseteq\Rtx_>^{k_\nu}$ canonically embedded in
     $\Rtx_>^k$ due to $G_\nu\subseteq G$ and $k_\nu\leq k$. Let
     $\mathfrak S$ be the union of all $\mathfrak S_\nu'$,
     \begin{displaymath}
       \mathfrak S:=\bigcup_{\nu=1}^{N+1} \mathfrak S_\nu' \subseteq \Rtx^k.
     \end{displaymath}

     Note that $\mathfrak S_\nu'\subseteq \mathfrak S_{G,k_\nu}$,
     because the construction of $\mathfrak S_{G,k_\nu}$ only depends
     on the first $k_\nu$ elements of $G$, which are exactly the
     elements of $G_\nu$.
     Moreover, Step $9$ implies that $\lt_{>_S}(\mathfrak S_\nu) =
     \LT_{>_S}(\mathfrak S_{G,k_\nu})$, which shows that $\mathfrak S$
     satisfies the first condition of our theorem,
     \begin{displaymath}
       \lt_{>_S}(\mathfrak S)= \lt_{>_S}\left(\,\bigcup_{l=1}^{k} \mathfrak S_{G,l}\right).
     \end{displaymath}

     Now for each $\xi\in\mathfrak S$ there exists an iteration
     $1\leq\nu\leq N$ in which it is chosen in Step $5$, $\xi =
     \sum_{i=1}^{k_\nu} q_{i,\nu}\cdot\varepsilon_i$.

     If $\varphi(\xi)=r_\nu=0$, then $\xi$ satisfies the second
     condition of our theorem. However if $\varphi(\xi)=r_\nu\neq 0$,
     then $g_{\nu+1} = r_\nu$ and $\xi$ can be replaced with
     $\xi-\varepsilon_{\nu+1}$ so that
     $\varphi(\xi-\varepsilon_{\nu+1})=0$. Note that this does not
     change the leading term, since by construction the maximal
     leading terms of $q_1\cdot g_1, \ldots, q_{l_\nu}\cdot g_{l_\nu}$
     cancel each other out, which implies that
     $q_{i,\nu}\cdot\varepsilon_i >_S\varepsilon_{\nu+1}$ for any
     $1\leq i \leq \nu$ with $q_{i,\nu}\neq 0$. Hence we obtain a set
     $\mathfrak S$ completely satisfying the second condition of our
     theorem.
   \end{proof}

   \begin{remark}[polynomial input]\label{rem:polynomialStandardBases}
     Should our input be polynomial, $g_1,\ldots,g_k\in R[t,x]^s$, then all
     normal form computations terminate and yield polynomial outputs
     as noted in \ref{rem:polynomialNormalForm}. In particular, our
     standard basis algorithm will terminate and the output will be
     polynomial as well.

     Moreover, if our input is $x$-homogeneous, then so is the resulting standard basis.
   \end{remark}

   Should $R$ be a factorial ring, Algorithm \ref{alg:standardBases} can be simplified to:

   \begin{algorithm}[standard basis algorithm for factorial
     rings]\label{alg:standardBasesPrincipal} \skipalgorithm
     \begin{algorithmic}[1]
       \REQUIRE $(G,>)$, where $G=(g_1,\ldots,g_k)$ be a $k$-tuple of
       elements in $\Rtx^s$ generating $M\leq\Rtx^s$ with $R$
       factorial and $>$ a
       $t$-local monomial ordering on $\Mon^s(t,x)$.
       \ENSURE $G'\subseteq M$ a standard basis of $M$ with respect to $>$.
       \STATE Suppose $G:=\{g_1,\ldots,g_k\}$.
       \STATE Initialize a pair-set, $P:=\{(g_i,g_j)\mid i<j\}$.
       \WHILE{$P\neq\emptyset$}
       \STATE Pick $(g_i,g_j)\in P$.
       \STATE Set $P:=P\setminus\{(g_i,g_j)\}$.
       \STATE Compute a weak normal form
       \begin{displaymath}
         r:=\NF_>(\spoly(g_i,g_j), G,>),
       \end{displaymath}
       where
       \begin{align*}
         &\spoly(g_i,g_j) \\
         &\qquad =\frac{\lcm(\lt_>(g_i),\lt_>(g_j))}{\lt_>(g_i)}\cdot g_i - \frac{\lcm(\lt_>(g_i),\lt_>(g_j))}{\lt_>(g_j)}\cdot g_j
       \end{align*}
       and
       \begin{align*}
         &\lcm(\lt_>(g_i),\lt_>(g_j)) \\
         &\qquad = \lcm(\lc_>(g_i),\lc_>(g_j))\cdot \lcm(\lm_>(g_i),\lm_>(g_j)).
       \end{align*}
       \IF{$r\neq 0$}
       \STATE Extend the pair-set, $P:=P\cup\{(g,r)\mid g\in G\}$.
       \STATE Set $G:=G\cup\{r\}$.
       \ENDIF
       \ENDWHILE
       \RETURN{$G':=G$.}
     \end{algorithmic}
   \end{algorithm}

\lang{
  \section{Standard basis algorithm for an application in tropical geometry}
}

   \begin{remark}[simplification for ideals in tropical geometry]\label{rem:tropicalcase}
     The most important application of standard bases over rings that we
     have in mind is motivated by tropical geometry over the field of
     $p$-adic numbers $\QQ_p$. Given a homogeneous ideal in $\QQ_p[x]$ we have to
     decide if the initial ideal with respect to some weight vector
     $w\in\RR^n$ is monomial free or not, where for the initial forms the
     valuation of the coefficients is taken into account (see \cite[Chapter~2]{MS15}). For this the
     ideal can be restricted to $\ZZ_p[x]$ and via the surjection
     \begin{displaymath}
       \pi:\ZZ\llbracket t\rrbracket[x]\longrightarrow\ZZ_p[x]:t\mapsto p
     \end{displaymath}
     we may pull the ideal back to the mixed power series ring
     $\ZZ\llbracket t\rrbracket[x]$. It is not hard to see
     (\cite{MR15b}) that the initial ideal
     of $I=\langle f_1,\ldots,f_k\rangle\unlhd\QQ_p[x]$ with respect to $w$ with
     $f_i\in\ZZ[x]$ is monomial free if and only if the initial ideal
     with respect to $(-1,w)$ of
     \begin{displaymath}
       J=\langle p-t,f_1,\ldots,f_k\rangle \unlhd \ZZ\llbracket t\rrbracket[x]
     \end{displaymath}
     is monomial free. But this can be
     checked through repeated standard basis computations. We are, thus,
     particularly interested in computing standard bases of
     $x$-homogeneous ideals in $\ZZ\llbracket t\rrbracket[x]$
     containing $p-t$ for some prime number $p$. Due to practical constraints,
     we restrict ourselves to ideals generated by polynomials.

     This is a situation that can be heavily exploited for our division algorithms. For any
     polynomial $f$ occuring in the reduction process either the leading
     coefficient $c$ is divisible by $p$ and can thus be reduced by $p-t$, or
     it is coprime to $p$, in which case the Euclidean Algorithm
     provides integers $a,b\in\ZZ$ such that
     \begin{displaymath}
       1=a\cdot c+b\cdot p,
     \end{displaymath}
     and hence replacing $f$ by $a\cdot f+b\cdot \lm_>(f)\cdot (p-t)$ we can pass to a
     polynomial with leading coefficient $1$. If we preprocess all
     polynomials, except $p-t$, added to our standard basis in the
     standard basis algorithm that way ($g_1,\ldots,g_k$ in the Input and $g_{k+1}$ in Step $9$ of Algorithm \ref{alg:standardBases}), checking if a leading term can
     be reduced (Step $3$ in Algorithms \ref{alg:HDDwR} and \ref{alg:DwR}) burns down to a simple divisibility check as in the case
     of standard bases over fields.
   \end{remark}

\lang{

  We will now describe the algorithms for the special case described
  in Remark~\ref{rem:tropicalcase} in detail, starting with the
  algorithm reducing a polynomial with respect to $p-t$.

   \begin{algorithm}[$\pRed$ --- $(p-t)$-reduce]\label{alg:pReduce} \skipalgorithm
     \begin{algorithmic}[1]
       \REQUIRE{$(g,>)$, where $>$ is a $t$-local monomial ordering and $g\in\ZZ[t,x]$.}
       \ENSURE{$(a,q,r)$ with $a\in\{1,\ldots,p-1\}$ and $q,r\in\ZZ[t,x]$,
         such that $a\cdot g=q\cdot (p-t)+r$,
         $\lm_>(g)\geq\lm_>(q)$ and either $r=0$ or
         $\lc_>(r)=1$.}
       \STATE Set $q:=0$
       \STATE Set $r:=g$.
       \WHILE{$p\;\mid\;\lc_>(r)$}
       \STATE Let $l:=\max\{m\in\NN\mid p^m \text{ divides } \lc_>(r)\}>0$.
       \STATE Set $r:=r-\frac{\lt_>(r)}{p^l}\cdot (p^l-t^l)$.
       \STATE Set $q:=q+\frac{\lt_>(r)}{p^l}\cdot\frac{p^l-t^l}{p-t}$.
       \ENDWHILE
       \IF{$r\not=0$}
       \STATE Compute with the Euclidean Algorithm
       $a\in\{1,\ldots,p-1\}$ and $b\in\ZZ$ such that
       $1=a\cdot\lc_>(r)+b\cdot p$.
       \STATE Set $r:=a\cdot r+b\cdot (p-t)\cdot \lm_>(r)$.
       \STATE Set $q:=a\cdot q-b\cdot \lm_>(r)$.
       \ENDIF
       \RETURN{$(a,q,r)$}
     \end{algorithmic}
   \end{algorithm}
   \begin{proof}
     \emph{Termination:} We need to show that eventually $p$ does not
     divide the leading coefficient of $r$ anymore.
     Let us for a moment consider the polynomial
     \begin{displaymath}
       r=\sum_{i=1}^k r_i\cdot x^{\alpha_i}
     \end{displaymath}
     as a polynomial
     in $x$ with coefficients $r_i$ in $\ZZ[t]$.
     Then the set of monomials
     in $x$ occuring in $r$ does not increase throughout the
     algorithm. Moreover, if the leading monomial of $r$ is contained
     in $r_i\cdot x^{\alpha_i}$ with
     \begin{displaymath}
       r_i=c_{i_1}\cdot t^{i_1}+\ldots+c_{i_j}\cdot t^{i_j}, i_1<\ldots<i_j,
     \end{displaymath}
     then in Step $5$ we
     substitute the term $c_{i_1}\cdot t^{i_1}x^{\alpha_i}$ by the term
     $c_{i_1}/p^l \cdot t^{i_1+l}x^{\alpha_i}$, increasing the
     minimal $t$-degree in $r_i$ strictly.

     Let $\nu_p(c):=\max\{m\in\NN\mid p^m \text{ divides } c\}$ denote
     the $p$-adic valuation on $\ZZ$, so that $l=\nu_p(c_{i_1})$, and
     consider the valued degree of $r_i$ defined by
     \begin{displaymath}
       m_i:=\max\{ \nu_p(c_{i_1})+\deg(t^{i_1}),\ldots, \nu_p(c_{i_j})+\deg(t^{i_j}) \}.
     \end{displaymath}
     This is a natural upper bound on the $t$-degree of our
     substituted $r_i$, and hence
     \begin{displaymath}
       \max\{m_1,\ldots,m_k\}
     \end{displaymath}
     is an upper bound for the $t$-degree of all terms in our new $r$.

     If the monomial of the substitute, $t^{i_1+l}x^{\alpha_i}$, does not
     occur in the original $r$, then this upper bound remains the same
     for out new $r$. If it does occur in the original $r$, then this
     valued degree might increase depending on the sum of the
     coefficients, however the number of terms in $r$ strictly
     decreases.

     Because $r$ has only finitely many terms to begin with, this upper
     bound may therefore only increase a finite number of times. And
     since the minimal $t$-degree is strictly increasing, if $p$
     divides the leading coefficient of $r$, our algorithm terminates
     eventually.

     \emph{Correctness:} Once the while loop is done, we have found
     polynomials $q$ and $r$ such that
     $g=q\cdot (p-t)+r$ and $\lm_>(g)\geq\lm_>(q)$.
     Moreover, we may assume that $r\not=0$. Since $p$
     does not divide the leading coefficient of $r$, these numbers are
     coprime and the Euclidean Algorithm computes integers $a,b\in\ZZ$
     such that
     \begin{displaymath}
       1=a\cdot \lc_>(r)+b\cdot p,
     \end{displaymath}
     and we may assume $a\in\{1,\ldots,p-1\}$. This leads to the equation
     \begin{displaymath}
       a\cdot g=(a\cdot q-b\cdot \lm_>(r))\cdot (p-t)+(a\cdot
       r+b\cdot (p-t)\cdot\lm_>(r)),
     \end{displaymath}
     and we are done by replacing $q$ with $a\cdot q-b\cdot \lm_>(r)$
     and $r$ with $a\cdot
     r+b\cdot (p-t)\cdot\lm_>(r)$. It is clear by construction that then
     $\lm_>(g)\geq\lm_>(q)$ and $\lc_>(r)=1$.
   \end{proof}

   \begin{remark}
     Given $p-t$ and a polynomial $g$ as in
     Algorithm~\ref{alg:pReduce}, we are interested in the ideal
     generated by these in the ring $\ZZ\llbracket t\rrbracket[x]$. If $r$ is the output
     of Algorithm~\ref{alg:pReduce}, then we have indeed
     \begin{displaymath}
       \langle p-t,g\rangle=\langle p-t,r\rangle \unlhd\ZZ\llbracket t\rrbracket[x].
     \end{displaymath}
     To see this consider the equation
     \begin{displaymath}
       a\cdot g=s\cdot (p-t)+r
     \end{displaymath}
     which implies the inclusion $\supseteq$. For the other inclusion
     it suffices to note that the integer $a\in\{1,\ldots,p-1\}$ is a unit in
     the ring of $p$-adic numbers $\ZZ\llbracket t\rrbracket/\langle
     p-t\rangle\cong\ZZ_p$.

     Moreover, note that the polynomials $q$ and $r$ will be
     $x$-homogeneous, if the input $g$ was $x$-homogeneous.
   \end{remark}

   Next we adjust the homogeneous determinate division with remainder
   to the situation that all but the first element in $G$ have leading
   coefficient one. This will be formulated for any base ring as in
   Convention~\ref{con:groundRing} and for any finite number of
   $t$-variables and $x$-variables.

   \begin{algorithm}[$\SHDDwR$ --- special version]\label{alg:tropHDDwR}\skipalgorithm
     \begin{algorithmic}[1]
       \REQUIRE{$(f,G,>)$, where $f\in\Rtx^s$ $x$-homogeneous,
         $G=(g_1,\ldots,g_k)$ a $k$-tuple of $x$-homogeneous elements
         in $\Rtx^s$ with $g_1=p-t$ and $\lc_>(g_i)=1$ for $i=2,\ldots,k$ and $>$ a $t$-local
         monomial ordering on $\Mon^s(t,x)$.}
       \ENSURE{$(Q,r)$, where $Q=(q_1,\ldots,q_k)\in\Rtx^k$ and $r\in \Rtx^s$ such that
         \begin{displaymath}
           f=q_1\cdot g_1+\hdots+q_k\cdot g_k+r
         \end{displaymath}
         satisfies
         \begin{description}
         \item[\rm (DD1)] no term of $q_i\cdot\lt_>(g_i)$ lies in $\langle\lt_>(g_j)\mid j<i\rangle$ for all $i$,
         \item[\rm (DD2)] no term of $r$ lies in $\langle\lt_>(g_1),\ldots,\lt_>(g_k)\rangle$,
         \item[\rm (DDH)] the $q_1,\ldots,q_k,r$ are either $0$ or $ x$-homogeneous of $ x$-degree \linebreak
           $\deg_ x(f)-\deg_ x(g_1),\ldots,\deg_ x(f)-\deg_ x(g_k),\deg_ x(f)$ respectively.
         \end{description}}
       \STATE Set $q_i:= 0$ for $i=1,\ldots,k$, $r:=0$, $\nu:=0$, $f_\nu:=f$.
       \WHILE{$f_\nu\neq 0$}
       \IF{$\exists\;i\;:\;\lt_>(g_i)\;\mid\;\lt_>(f_\nu)$}
       \STATE Choose $i\in\{1,\ldots,k\}$ minimal with $\lt_>(g_i)\;\mid\;\lt_>(f_\nu)$.
       \FOR{j=1,\ldots,k}
       \STATE Set
       \begin{displaymath}
         q_{j,\nu}:=
         \begin{cases}
           \frac{\lt_>(f_\nu)}{\lt_>(g_i)}, & \text{if } j=i, \\
           0,                               & \text{otherwise.}
         \end{cases}
       \end{displaymath}
       \ENDFOR
       \STATE Set $r_\nu:=0$.
       \ELSE
       \STATE Set $q_{i,\nu}:=0$, for $i=1,\ldots,k$, and $r_\nu:=\lt_>(f_\nu)$.
       \ENDIF
       \STATE Set $q_i:=q_i + q_{i,\nu}$ for $i=1,\ldots,k$ and $r:= r+r_\nu$.
       \STATE Set $f_{\nu+1}:=f_{\nu} - (q_{1,\nu}\cdot g_1 + \ldots + q_{k,\nu}\cdot g_k + r_\nu)$ and $\nu:=\nu+1$.
       \ENDWHILE
       \RETURN{$((q_1,\ldots,q_k),r)$}
     \end{algorithmic}
   \end{algorithm}
   \begin{proof}
     We just have to note that the condition
     \begin{displaymath}
       \lt_>(f_\nu)\in\langle \lt_>(g_1),\ldots,\lt_>(g_k)\rangle
     \end{displaymath}
     is equivalent to the condition
     \begin{displaymath}
       \exists\;i\;:\;\lt_>(g_i)\;\mid\;\lt_>(f_\nu).
     \end{displaymath}
     For this observe, that as soon as some $\lt_>(g_i)$ for
     $i=2,\ldots,k$ occurs in a linear combination representing
     $\lt_>(f_\nu)$ then necessarily $\lt_>(g_i)$ divides
     $\lt_>(f_\nu)$.

     Hence, the algorithm coincides with Algorithm~\ref{alg:HDDwR},
     only the test in Step $3$ has been simplified.
   \end{proof}

   \begin{remark}
     In Remark~\ref{rem:tropicalcase} we explained that our main
     interest lies in the computation of a standard basis for ideals
     genereated by $p-t$ and a finite number of $x$-homogeneous
     polynomials. For this we only need a suitable division algorithm
     and one might think, that $\SHDDwR$ applies in that
     situation. However, it does not! The problem here is termination.
     If we try to reduce completely with respect to $p-t$ the
     algorithm will in general produce power series and will not
     terminate. In our application we will have to consider $t$ as an additional
     polynomial variable. Then the division algorithm $\SDwR$ (see Algorithm~\ref{alg:tropDwR})
     applies and terminates.
   \end{remark}

   In the specialized algorithm for weak division with remainder we
   restrict to the base ring $\ZZ$. Moreover, we assume that the input
   is polynomial, so that we are able to homogenize also with respect
   to the variable $t$. We, therefore, change our convention for this
   one algorithm and set $x=(t,x_1,\ldots,x_n)$.

   \begin{algorithm}[$\SDwR$ - special version of $\DwR$] \label{alg:tropDwR}\skipalgorithm
     \begin{algorithmic}[1]
       \REQUIRE{$(f,G,>)$, where $f\in\ZZ[x]=\ZZ[t,x_1,\ldots,x_n]$ and $G=(g_1,\ldots,g_k)$
         is a $k$-tuple in $\ZZ[x]$ with $g_1=p-t$ and $\lc_>(g_i)=1$
         for $i=2,\ldots,k$ and $>$ a $t$-local
         monomial ordering on $\Mon(x)=\Mon(t,x_1,\ldots,x_n)$.}
       \ENSURE{$(u,Q,r)$, where $u\in \ZZ[x]$ with $p\nmid\lc_>(u)=\lt_>(u)$,
         $Q=(q_1,\ldots,q_k)\subseteq\ZZ[x]^k$
         and $r\in \ZZ[x]$ such that
         \begin{displaymath}
           u\cdot f=q_1\cdot g_1+\hdots+q_k\cdot g_k+r
         \end{displaymath}
         satisfies
         \begin{description}
         \item[\rm (ID1)] $\lm_>(f)\geq\lm_>(q_i\cdot g_i)$ for $i=1,\ldots,k$ and
         \item[\rm (ID2)] $\lt_>(r)\notin \langle\lt_>(g_1),\ldots,\lt_>(g_k)\rangle$, unless $r=0$.
         \end{description}
         Moreover, the algorithm requires only a finite number of recursions.}
       \STATE Compute \begin{displaymath}
         (a,q,f):=\pRed(f,>).
       \end{displaymath}
       \IF{$f\neq 0$ and $\exists\;i\;:\;\lt_>(g_i)\;|\;\lt_>(f)$}
       \STATE Set $D:=\{ g_i\in G \mid \lt_>(g_i) \text{ divides } \lt_>(f) \}$.
       \STATE Pick $g_j\in D$ with minimal ecart. 
       \IF{$e:= \ecart_>(g_j) -  \ecart_>(f)>0$}
       \STATE Compute
       \begin{displaymath}
         ((Q_1',\ldots,Q_k'),R'):=\SHDDwR(x_0^e\cdot f^h,(\lt_>(g_1^h),\ldots,\lt_>(g_k^h)),>_h).
       \end{displaymath}
       \STATE Set $f':=(x_0^e\cdot f^h - \sum_{i=1}^k Q_i'\cdot g_i^h)^d$.
       \STATE Compute \begin{displaymath}
         (u'',(q_1'',\ldots,q_{k+1}''),r):=\SDwR(f',(g_1,\ldots,g_k,f),>).
       \end{displaymath}
       \STATE Set $q_i:= q_i''+u''\cdot Q_i'^d$, $i=1,\ldots,k$.
       \STATE Set $u:=u''-q_{k+1}''$.
       \ELSE
       \STATE Compute \begin{displaymath}
         ((Q_1',\ldots,Q_k'),R'):=\SHDDwR(f^h,(g_1^h,\ldots,g_k^h),>_h).
       \end{displaymath}
       \STATE Compute \begin{displaymath}
         (u,(q_1'',\ldots,q_k''),r):=\SDwR((R')^d,(g_1,\ldots,g_k),>).
       \end{displaymath}
       \STATE Set $q_i:=q_i''+u\cdot Q_i'^d$, $i=1,\ldots,k$.
       \ENDIF
       \ELSE
       \STATE Set $(u,(q_1,\ldots,q_k),r):=(1,(0,\ldots,0),f)$.
       \ENDIF
       \RETURN{$(a\cdot u,(q_1+q,q_2,\ldots,q_k),r)$.}
     \end{algorithmic}
   \end{algorithm}
   \begin{proof}
     Note first, that after Step $1$ the new polynomial $f$ has leading
     coefficient $1$, its leading monomial is less than or equal to
     that of the original $f$ and the same holds for the leading
     monomial $\lm_>(q)=\lm_>(q\cdot g_1)$.

     We then should keep in mind that, as in Algorithm~\ref{alg:tropHDDwR},
     the condition
     \begin{displaymath}
       \lt_>(f)\in\langle \lt_>(g_1),\ldots,\lt_>(g_k)\rangle
     \end{displaymath}
     is equivalent to
     \begin{displaymath}
       \exists\;i\;:\;\lt_>(g_i)\;|\;\lt_>(f).
     \end{displaymath}

     \emph{Finiteness of recursions:} For sake of clarity, label all
     the objects appearing in the $\nu$-th recursion
     step by a subscript $\nu$. For example the ecart $e_\nu\in\NN$,
     the element $f_\nu\in\ZZ[x]$ and the subset
     $G_\nu\subseteq\ZZ[x]$.

     Since $G_1^h\subseteq G_2^h\subseteq G_3^h\subseteq \hdots$, we
     have an ascending chain of leading ideals in $\ZZ[x_h]$, which
     eventually stabilizes unless the algorithm terminates beforehand
     \begin{displaymath}
       \LT_{>_h}(G_1^h)\subseteq  \LT_{>_h}(G_2^h)\subseteq \ldots \subseteq \LT_{>_h}(G_N^h) = \LT_{>_h}(G_{N+1}^h) = \ldots.
     \end{displaymath}

     Assume $e_N>0$. Then we'd have $f_N\in G_{N+1}$, and thus
     \begin{displaymath}
       \lt_{>_h}(f_N^h)\in \lt_{>_h}(G_{N+1}^h)=\lt_{>_h}(G_N^h).
     \end{displaymath}
     To put it differently, we'd have a $g^h\in G_N^h$ such that
     \begin{displaymath}
       \lt_{>_h}(g^h)\;\mid\;\lt_{>_h}(f_N^h),
     \end{displaymath}
     which by Remark \ref{re:homogenization} (5) would imply that
     \begin{displaymath}
       \lt_>(g)\;\mid\;\lt_>(f_N)
       \text{ and }\ecart_>(g)\leq\ecart_>(f_N).
     \end{displaymath}
     This contradicts our assumption
     \begin{displaymath}
       e_N = \min\{ \ecart_>(g) \mid g \in D_N \} - \ecart_>(f_N) \overset{!}{>} 0. \quad \lightning
     \end{displaymath}
     Therefore we have $e_N\leq 0$. By induction we conclude that
     $e_\nu\leq 0$ for all $\nu\geq N$, i.e. that we will exclusively
     run through steps 13-15 of the ``else'' case from the $N$-th
     recursion step onwards.

     By the properties of HDDwR we know that in particular
     \begin{equation}\label{eq:SDwR:2}
       \lt_{>_h}(R_N')\notin \LT_>(G_N^h).
     \end{equation}
     Now assume that the recursions would not stop with the next recursion.
     That means there exists a $g\in G_N=G_{N+1}$ such that
     \begin{displaymath}
       \lt_>(g)\;\mid\;\lt_>(f_{N+1})=\lt_>((R_N')^d),
     \end{displaymath}
     and because of $e_{N+1}\leq 0$ also
     \begin{displaymath}
       \ecart(g)\leq\ecart(f_{N+1})=\ecart((R_N')^d).
     \end{displaymath}
     It then  follows from Remark \ref{re:homogenization}
     (6) that
     \begin{displaymath}
       \lt_{>_h}(g^h)\;\mid\;\lt_{>_h}(R_N'),
     \end{displaymath}
     in contradiction to \eqref{eq:SDwR:2}.
     Hence the algorithm terminates after the $N+1$-st recursion step.

     \emph{Correctness:}
     In what follows we will denote by $f$ the original polynomial and by
     $\tilde{f}$ the polynomial $f$ after Step $1$. Moreover, we
     recall that
     \begin{equation}\label{eq:SDwR:3}
       a\cdot f=q\cdot g_1+\tilde{f}
     \end{equation}
     with $\lm_>(f)\geq \lm_>(q)=\lm_>(q\cdot g_1)$.

     We make an induction on the number of recursions, say $N\in\NN$.
     If $N=1$ then either $\tilde{f}=0$ or $\lt_>(\tilde{f})$ is not
     divisible by any $\lt_>(g_i)$, and in both cases
     \begin{displaymath}
       1\cdot \tilde{f}=0\cdot g_1+\hdots+0\cdot g_k+\tilde{f}
     \end{displaymath}
     satisfies (ID1) and (ID2), and thus by \eqref{eq:SDwR:3} so does
     \begin{displaymath}
       a\cdot f=q\cdot g_1+0\cdot g_2+\ldots 0\cdot g_k+\tilde{f}.
     \end{displaymath}

     So suppose $N>1$ and consider the first recursion step.
     If $e \leq 0$, then by the properties of HDDwR the representation
     \begin{displaymath}
       \tilde{f}^h=Q_1'\cdot g_1^h+\hdots+ Q_k'\cdot g_k^h+R'
     \end{displaymath}
     satisfies (DD1), (DD2) and (DDH). (DD1) and (DD2) imply (ID1), which means that for each $i=1,\hdots,k$ we have
     \begin{flushleft}
       $x_0^{\ecart_>(\tilde{f})}\cdot\lm_>(\tilde{f})=\lm_{>_h}(\tilde{f}^h)\overset{(\text{ID1})}{\geq_h} \lm_{>_h}(Q_i')\cdot\lm_{>_h}(g_i^h)=\ldots$
     \end{flushleft}
     \begin{flushright}
       $\ldots=x_0^{a_i+\ecart_>(g_i)}\cdot\lm_>(Q_i'^d)\cdot\lm_>(g_i)$
     \end{flushright}
     for some $a_i\geq 0$. Since $\tilde{f}^h$ and $Q_i'\cdot g_i^h$ are both $x_h$-homogeneous of the same $x_h$-degree
     by (DDH), the definition of the homogenized ordering $>_h$ implies
     \begin{equation}
       \label{eq:dwr11}
       \lm_>(\tilde{f})\geq\lm_>(Q_i'^d)\cdot\lm_>(g_i) \text{ for all } i=1,\ldots,k.
     \end{equation}
     Moreover, by induction the representation $u\cdot R'^d=q_1''\cdot g_1+\ldots+q_k''\cdot g_k+r$
     satisfies (ID1), (ID2) and $p\nmid\lc_>(u)=\lt_>(u)$, the first implying that
     \begin{equation}
       \label{eq:dwr12}
       \lm_>(\tilde{f})\overset{(\ref{eq:dwr11})}{\geq}\lm_>\underbrace{\left(\tilde{f}-\sum_{i=1}^kQ_i'^d\cdot g_i\right)}_{=R'^d}\overset{\text{(ID1)}}{\geq}\lm_>(q_i''\cdot g_i).
     \end{equation}
     Therefore, the representation
     \begin{displaymath}
       u\cdot \tilde{f}=\sum_{i=1}^k(q_i''+u\cdot Q_i'^d)\cdot g_i+r
     \end{displaymath}
     satisfies (ID1) by (\ref{eq:dwr11}), (\ref{eq:dwr12}), $p\nmid
     \lc_>(u)=\lt_>(u)$ and (ID2) by induction, and hence by
     \eqref{eq:SDwR:3} so does the
     representation
     \begin{displaymath}
       a\cdot u\cdot f=(q_1''+u\cdot Q_i'^d+q)\cdot g_1+\sum_{i=2}^k(q_i''+u\cdot Q_i'^d)\cdot g_i+r.
     \end{displaymath}

     Similarly, if $e>0$, then by the properties of HDDwR the representation
     \begin{displaymath}
       x_0^e\cdot \tilde{f}^h=Q_1'\cdot\lt_{>_h}(g_1^h)+\ldots+Q_k'\cdot\lt_{>_h}(g_k^h)+R'
     \end{displaymath}
     satisfies (DD1), (DD2) and (DDH). (DD1) and (DD2) imply (ID1), which means that for each $i=1,\ldots,k$ we have
     \begin{flushleft}
       $x_0^{e+\ecart_>(\tilde{f})}\cdot\lm_>(\tilde{f})=\lm_{>_h}(x_0^e\cdot \tilde{f}^h) \geq \ldots$
     \end{flushleft}
     \begin{flushright}
       $\ldots\geq\lm_{>_h}(Q_i')\cdot\lm_{>_h}(\lt_{>_h}(g_i^h))=x_0^{a_i+\ecart_>(g_i)}\cdot\lm_>(Q_i'^d)\cdot\lm_>(g_i),$
     \end{flushright}
     for some $a_i\geq 0$.
     Since $x_0^e\cdot \tilde{f}^h$ and $Q_i'\cdot \lt_{>_h}(g_i^h)$ are both
     $x_h$-homogeneous of the same $x_h$-degree by (DDH), the
     definition of the homogenized ordering $>_h$ implies
     \begin{equation}
       \label{eq:dwr21}
       \lm_>(\tilde{f})\geq\lm_>(Q_i'^d)\cdot\lm_>(g_i).
     \end{equation}
     Moreover, by induction the representation
     $u''\cdot \tilde{f}'=\sum_{i=1}^kq_i''\cdot g_i+q_{k+1}''\cdot \tilde{f}+r $
     satisfies (ID1), (ID2), $p\nmid \lc_>(u'')=\lt_>(u'')$ with the first implying that
     \begin{equation}
       \label{eq:dwr22}
       \lm_>(\tilde{f})\overset{(\ref{eq:dwr21})}{\geq}\underbrace{\lm_>\left(\tilde{f}-\sum_{i=1}^kQ_i'^d\cdot g_i\right)}_{=\lm_>(R'^d)}\overset{\text{(ID1)}}{\geq} \lm_>(q_i''\cdot g_i).
     \end{equation}
     Therefore, the representation
     \begin{displaymath}
       u\cdot \tilde{f}=\sum_{i=1}^k(q_i''+u''\cdot Q_i'^d)\cdot g_i+r, \text{ with } u=u''-q_{k+1}''
     \end{displaymath}
     satisfies (ID1) by (\ref{eq:dwr21}), (\ref{eq:dwr22}), $p\nmid \lc_>(u'')=\lt_>(u'')$ and (ID2) by induction.

     To see that $\lt_>(u)=\lt_>(u'')$ and hence
     $p\nmid\lc_>(u)=\lt_>(u)$, observe that
     \begin{displaymath}
       \lt_{>_h}(x_0^e\cdot \tilde{f}^h)\in \langle\lt_>(g_1^h),\ldots,\lt_>(g_k^h)\rangle,
     \end{displaymath}
     which is why
     \begin{displaymath}
       \lm_>(\tilde{f})=\lm_{>_h}(x_0^e \cdot \tilde{f}^h)^d>\lm_{>_h}\left(x_0^e\cdot \tilde{f}^h - \sum_{i=1}^k Q_i'\cdot g_i^h\right)^d=\lm_>(\tilde{f}').
     \end{displaymath}
     Thus $\lm_>(\tilde{f})>\lm_>(\tilde{f}')\geq\lm_>(q_{k+1}'')\cdot\lm_>(\tilde{f})$, which implies $\lm_>(q_{k+1}'')<1$
     and thus $\lt_>(u)=\lt_>(u'')$.
   \end{proof}

   \begin{remark}
     The representation
     \begin{equation}\label{eq:remSDwR:1}
       u\cdot f=q_1\cdot g_1+\hdots+q_k\cdot g_k+r
     \end{equation}
     that we compute in Algorithm~\ref{alg:tropDwR} is actually not a
     standard representation in the sense that we defined, even though
     it satisfies (ID1) and (ID2). The reason is, that we replaced the
     condition
     \begin{displaymath}
       \lt_>(u)=1
     \end{displaymath}
     by the weaker condition
     \begin{displaymath}
       p\nmid\lc_>(u)\text{ and } \lm_>(u)=1.
     \end{displaymath}
     However, if $p$ does not divide the integer $\lc_>(u)$ then this
     number is invertible in the ring of $p$-adic numbers
     \begin{displaymath}
       \ZZ\llbracket t\rrbracket/\langle p-t\rangle\cong\ZZ_p,
     \end{displaymath}
     which implies that there are power series $g,h\in\ZZ\llbracket
     t\rrbracket$ such that
     \begin{displaymath}
       g\cdot\lc_>(u)=1+h\cdot (p-t).
     \end{displaymath}
     Replacing in the above representation $u$ by $g\cdot u$, $r$ by
     $g\cdot r$,  $q_1$ by
     $g\cdot q_1-h$ and $q_i$ by $g\cdot q_i$ for $i=2,\ldots,k$ we
     get a standard representation with coefficients in $\ZZ\llbracket
     t\rrbracket[x]$. The representation is thus good enough for our
     purposes.

     We, actually, could even easily turn \eqref{eq:remSDwR:1} into a
     polynomial standard representation as follows. If $a,b\in\ZZ$ with
     \begin{displaymath}
       a\cdot\lc_>(u)+b\cdot p=1
     \end{displaymath}
     and if
     \begin{displaymath}
       b=\sum_{j=0}^l c_j\cdot p^j
     \end{displaymath}
     is the $p$-adic expansion of $b$, then
     \begin{displaymath}
       a\cdot \lc_>(u)=1-\sum_{j=1}^{l+1}c_{j-1}\cdot p^j
       =
       1-\sum_{j=1}^{l+1}c_{j-1}\cdot t^j+h\cdot (p-t)
     \end{displaymath}
     for some polynomial $h\in\ZZ[t]$. With
     \begin{displaymath}
       v=1-\sum_{j=1}^{l+1}c_{j-1}\cdot t^j+\tail(u)
     \end{displaymath}
     and multiplying
     \eqref{eq:remSDwR:1} by $a$ we thus get
     \begin{displaymath}
       (v+h\cdot (p-t))\cdot f=\sum_{i=1}^k a\cdot q_i+a\cdot r
     \end{displaymath}
     or equivalently
     \begin{displaymath}
       v\cdot f=(a\cdot q_1-h\cdot f)\cdot g_1+\sum_{i=2}^k
       a\cdot q_i\cdot g_i+a\cdot r,
     \end{displaymath}
     which is a standard representation with $\lc_>(v)=1$ and
     $v,q_1,\ldots,q_k,r\in\ZZ[t,x]$.
%
   \end{remark}

   With this division with remainder at hand, we can formulate the
   standard basis algorithm for this special setting. The proof works
   as in Algorithm~\ref{alg:standardBases}.

   \begin{algorithm}[standard basis algorithm --- special case]\label{alg:standardBasesSpecial} \skipalgorithm
     \begin{algorithmic}[1]
       \REQUIRE $(G,>)$, where $G=(g_1,\ldots,g_k)$ be a $k$-tuple of
       elements in $\ZZ[t,x]$ with $g_1=p-t$
       and $>$ a
       $t$-local monomial ordering on $\Mon(t,x)$.
       \ENSURE $G'=(g'_1,\ldots,g'_l)$ a standard basis of $\langle
       G\rangle\unlhd\ZZ\llbracket t\rrbracket[x]$ with respect to $>$
       such that $g_1'=p-t$ and $\lc_>(g'_i)=1$ for $i=2,\ldots,l$.
       \FOR{$i=2,\ldots,k$}
       \STATE Compute $(a,q,r):=\pRed(g_i,>)$.
       \STATE Set $g_i:=r$.
       \ENDFOR
       \STATE Initialize a pair-set, $P:=\{(g_i,g_j)\mid i<j\}$.
       \WHILE{$P\neq\emptyset$}
       \STATE Pick $(g_i,g_j)\in P$.
       \STATE Set $P:=P\setminus\{(g_i,g_j)\}$.
       \STATE Compute
       \begin{displaymath}
         (u,(q_1,\ldots,q_k),r):=\SDwR_>(\spoly(g_i,g_j), G,>),
       \end{displaymath}
       where
       \begin{align*}
         &\spoly(g_i,g_j) \\
         &\qquad =\frac{\lcm(\lt_>(g_i),\lt_>(g_j))}{\lt_>(g_i)}\cdot g_i - \frac{\lcm(\lt_>(g_i),\lt_>(g_j))}{\lt_>(g_j)}\cdot g_j
       \end{align*}
       and
       \begin{align*}
         &\lcm(\lt_>(g_i),\lt_>(g_j)) \\
         &\qquad = \lcm(\lc_>(g_i),\lc_>(g_j))\cdot \lcm(\lm_>(g_i),\lm_>(g_j)).
       \end{align*}
       \IF{$r\neq 0$}
       \STATE Compute $(a,q,r):=\pRed(r,>)$.
       \STATE Extend the pair-set, $P:=P\cup\{(g,r)\mid g\in G\}$.
       \STATE Set $G:=G\cup\{r\}$.
       \ENDIF
       \ENDWHILE
       \RETURN{$G':=G$.}
     \end{algorithmic}
   \end{algorithm}

   \begin{remark}
     We would like to remark that the standard basis elements
     will be $x$-homogeneous if the input
     was so.
   \end{remark}

}


   \section{Reduced standard bases}

   In this rather short section we recall the notion of a reduced
   standard basis and show what problems we run into when allowing
   base rings that are not fields and local orderings. Reduced
   standard bases play a very important role in the computation of
   Gr\"obner fans and tropical varieties. Since they turn not to be
   computationally feasible in our setting, we will replace them
   by a weaker notion that is
   good enough for the computation of Gr\"obner fans and tropical
   varieties.

   \begin{definition}
     Let $G,H\subseteq \Rtx^s$ be two finite subsets. Given a
     $t$-local monomial ordering $>$ on $\Mon^s(t,x)$, we call $G$
     \emph{reduced} with respect to $H$, if, for all $g\in G$, no term
     of $\tail_>(g)$ lies in $\LT_>(H)$.

     And we simply call $G$ \emph{reduced}, if it is reduced with
     respect to itself and minimal in the sense that no proper subset
     $G'\subsetneq G$ is sufficient to generate its leading module,
     i.e. $\lt_>(G')\subsetneq \lt_>(G)$.

     Observe that we forego any kind of normalization of the leading
     coefficients that is normally done in polynomial rings over
     ground fields.
   \end{definition}

   If our module is generated by $x$-homogeneous elements, it is not
   hard to show that reduced standard bases exist. Given an
   $x$-homogeneous standard basis, one can pursue a strategy similar
   to the classical reduction algorithm based on repeated tail
   reduction. Lemma \ref{lem:markwig2} guarantees its convergence in
   the $\langle t\rangle$-adic topology.

   \begin{algorithm}[reduction algorithm]\label{alg:totalReduction} \skipalgorithm
     \begin{algorithmic}[1]
       \REQUIRE $(G,>)$, where $G=\{g_1,\ldots,g_k\}$ is a minimal $x$-homogeneous standard basis of $M\leq \Rtx^s$ with respect to the weighted ordering $>=>_w$ with $w\in\RR_{<0}^m\times\RR^{n+s}$.
       \ENSURE $G'=\{g_1',\ldots,g_k'\}$ an $x$-homogeneous reduced standard basis of $M$ with respect to $>$ such that $\lm_{>}(g_i')=\lm_{>}(g_i)$.
       \FOR{$i=1,\ldots,k$}
       \STATE Set $g_i':=g_i$.
       \STATE Create a working list
       \begin{displaymath}
         L:=\{ p \in \Rtx^s \mid p \text{ term of } g_i',\,\LM_>(g_i') > p\},
       \end{displaymath}
       \WHILE{$L\neq \emptyset$}
       \STATE Pick $p\in L$ with $\lm_>(p)$ maximal.
       \STATE Set $L:=L\setminus\{p\}$.
       \IF{$p\in \lt_>(M)$}
       \STATE Compute homogeneous division with remainder
       \begin{displaymath}
         ((q_1,\ldots,q_k),r) = \HDDwR(p,(g_1,\ldots,g_k),>).
       \end{displaymath}
       \STATE Set $g_i':=g_i'-(q_1\cdot g_1+\ldots+q_k\cdot g_k)$.
       \STATE Update the working list
       \begin{displaymath}
         L:=\{ p' \in \Rtx^s \mid p' \text{ term of } g_i,\, \lm_>(p)> \lm_>(p') \}.
       \end{displaymath}
       \ENDIF
       \ENDWHILE
       \ENDFOR
       \RETURN{$\{g_1',\ldots,g_k'\}$}
     \end{algorithmic}
   \end{algorithm}
   \begin{proof}
     Pick an $i=1,\ldots,k$. Labelling all objects occurring in the
     $\nu$-the recurring step by a subscript $\nu$, we have a strictly
     decreasing sequence
     \begin{displaymath}
       \lm_>(p_1)>\lm_>(p_2)>\lm_>(p_3)>\ldots\, .
     \end{displaymath}
     And since $\lm_>(p_\nu)\geq \lm_>(q_{j,\nu}\cdot g_j)$ for all
     $j=1,\ldots,k$, the sequence $(q_{j,\nu}\cdot g_j)_{\nu\in\NN}$
     must also converge in the $\langle t\rangle$-adic topology
     together with $(p_\nu)_{\nu\in\NN}$.
     In particular, the element $g_i'=g_i - \sum_{\nu=0}^\infty \sum_{j=1}^k q_{j,\nu}\cdot g_j$ in our output exists.

     Also, while setting $g_{i,\nu+1}'=g_{i,\nu}'-(q_{1,\nu}\cdot
     g_1+\ldots+q_{k,\nu}\cdot g_k)$ apart from the term $p_\nu$
     cancelling, the terms changed are all strictly smaller than
     $p$. Hence for any term $p$ of $g_i'$, $p\neq\LT_>(g_i)$, there
     is a recursion step in which it is picked. Because $p$ is not
     cancelled during the step, we have $p\notin\LT_>(M)$. Therefore
     no term of $g_i'$ apart from its leading term lies in
     $\LT_>(M)$.
   \end{proof}

   One nice property of reduced standard bases, that is repeatedly
   used in the established theory of Gr\"obner fans of polynomial
   ideals over a ground field, is their uniqueness up to
   multiplication by units. In fact, this property does not change
   even if we add power series into the mix.

   \begin{lemma}\label{lem:reducedStandardBasisUnique}
     Let $R$ be a field and let $M\leq\Rtx^s$ or $M\leq\Rtx^s_>$ be a
     module generated by $x$-homogeneous elements. Then $M$ has a
     unique monic, reduced standard basis.
   \end{lemma}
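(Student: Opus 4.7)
The proof naturally splits into existence and uniqueness. For existence, one starts with any standard basis of $M$, which exists by the standing existence proposition since a field is noetherian. Next, one makes this basis minimal by discarding any element whose leading monomial lies in the ideal generated by the leading monomials of the others. Because $R$ is a field, each remaining element can be rescaled by the inverse of its leading coefficient to become monic. Finally, one applies Algorithm \ref{alg:totalReduction} (after reducing to a weighted $t$-local ordering via Lemma \ref{lem:markwig1} if needed), which preserves the leading monomials while ensuring no tail term lies in $\LT_>(M)$; convergence of the output in the $\langle t\rangle$-adic topology is supplied by Lemma \ref{lem:markwig2} from the strictly decreasing chain of leading monomials. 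The result is a monic, reduced, $x$-homogeneous standard basis.

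For uniqueness, let $G=\{g_1,\ldots,g_k\}$ and $H=\{h_1,\ldots,h_l\}$ be two monic reduced standard bases of $M$. Since both are standard bases, $\LT_>(G)=\LT_>(M)=\LT_>(H)$. Because each is minimal, no leading monomial in $G$ divides another in $G$, and likewise for $H$; combining this with the equality of leading modules yields a bijection for which $\lm_>(g_i)=\lm_>(h_i)$ after reindexing, and hence $k=l$. Monicness then gives $\lt_>(g_i)=\lt_>(h_i)$.

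The crux is to show $d_i:=g_i-h_i=0$ for every $i$. Since the leading terms cancel, $d_i=\tail_>(g_i)-\tail_>(h_i)$ and $\lm_>(d_i)<\lm_>(g_i)$ whenever $d_i\neq 0$. Assume for contradiction $d_i\neq 0$. Then $d_i\in M$ gives $\lt_>(d_i)\in\LT_>(M)=\LT_>(G)$, so $\lm_>(d_i)$ is divisible by some $\lm_>(g_j)$. But $\lm_>(d_i)$ must also be a monomial appearing in $\tail_>(g_i)$ or in $\tail_>(h_i)$, contradicting the reducedness of $G$ (respectively $H$), which forbids any tail monomial from being divisible by a leading monomial of $G=H$. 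Hence $d_i=0$ for every $i$, proving $G=H$.

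The main obstacle is making the argument work uniformly for the two cases $M\leq\Rtx^s$ and $M\leq\Rtx_>^s$. Uniqueness is purely formal and transfers directly, since it relies only on the behaviour of leading terms. Existence requires a bit more care in the localized case: one must verify that reduction, which may introduce units from $S_>$, does not destroy the property of being $x$-homogeneous or the matching of leading monomials, and that the output remains a bona fide standard basis of $M$ in $\Rtx_>^s$ — both of which follow because leading terms are invariant under multiplication by elements of $S_>$ and because Algorithm \ref{alg:totalReduction} only modifies tails.
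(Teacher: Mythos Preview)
Your proof is correct and follows essentially the same approach as the paper. The paper's own proof concentrates on uniqueness (existence being delegated to the preceding reduction algorithm) and parameterizes the bijection between $G$ and $H$ via the unique minimal monomial generating set $A$ of $\LM_>(M)=\LT_>(M)$ rather than arguing directly from minimality as you do; the concluding contradiction, that $g_i-h_i\in M$ would force a tail term of $g_i$ or $h_i$ to lie in $\LT_>(M)$, is identical.
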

   \begin{proof}
     Because $R$ is a field, we have $\LT_>(M)=\LM_>(M)$ and since
     $\LM_>(M)$ has a unique minimal generating system consisting of
     monomials, let's call it $A$, so does $\LT_>(M)$.

     Let $G=\{g_1,\ldots,g_k\}$ be a monic, reduced standard bases of
     $M$. Observe that the leading terms of $G$ form a standard basis
     of the leading module of $M$. That means each $a\in A\subseteq
     \LT_>(M)$ can be expressed with a standard representation of the
     leading terms of $G$,
     \begin{displaymath}
       a = q_1\cdot \LT_>(g_1)+\ldots+ q_k\cdot \LT_>(g_k).
     \end{displaymath}
     Since there is no cancellation of higher terms in the standard
     representation, there must exist an $i=1\ldots,k$ with
     $a=\LM_>(q_i\cdot g_i)$. This implies $\LM_>(g_i)=a$ because $a$
     wouldn't be a minimal generator of $\LM_>(M)$ otherwise. And
     because $G$ is monic, $\LT_>(g_i)=a$.

     Therefore, given a reduced standard basis $G$, we see that for
     any minimal generator $a\in A$ there exists an element $g\in G$
     with $\lm_>(g)=a$. And since reduced standard bases are minimal
     themselves, it means that there is exactly one element $g\in G$
     per minimal generator $a\in A$.

     Now let $G$ and $H$ be two different reduced standard basis of
     $M$. Let $a\in A$ and let $g\in G$, $h\in H$ be the basis element
     with leading monomial $a$. If $g-h\neq 0$, then $g-h\in M$ must
     have a non-zero leading monomial which lies in
     $\LM_>(M)$. However, that monomial also has to occur in either
     $g$ and $h$, and since $R$ is a field the term with that monomial
     has to lie in $\LT_>(M)=\LM_>(M)$, contradicting that $G$ and $H$
     were reduced.
   \end{proof}

   However, it can easily be seen that this does not hold over rings.

   \begin{example}
     Consider the ring $\ZZ[x,y]$ and the degree lexicographical ordering $>$, i.e.
     \begin{align*}
       &x^{a_1} y^{a_2} > x^{b_1} y^{b_2} \quad :\Longleftrightarrow \\
       & \qquad a_1+a_2> b_1+b_2 \text{ or} \\
       & \qquad a_1+a_2=b_1+b_2 \text{ and } (a_1,a_2)>(b_1,b_2) \text{ lexicographically in } \RR^2.
     \end{align*}
     Consider the following ideal and its leading ideal:
     \begin{displaymath}
       I:=\langle 2x^2y+1, 3xy^2+1\rangle \text{ and } \LT_>(I)=\langle 2x, 9y^3, xy^2 \rangle.
     \end{displaymath}
     Two possible standard bases, both reduced, are (leading terms highlighted)
     \begin{center}
       \begin{tikzpicture}
         \matrix (m) [matrix of math nodes, row sep=1em, column sep=0em]
         { G_1=\{&\underline{2x}-3y,&\underline{9y}^3+2,&\underline{xy}^2+3y^3+1&\}, \\
           G_2=\{&\underline{2x}-3y,&\underline{9y}^3+2,&\underline{xy}^2-6y^3-1&\}. \\
         };
         \draw[draw opacity=0] (m-2-2) -- node[sloped] {$=$} (m-1-2);
         \draw[draw opacity=0] (m-2-3) -- node[sloped] {$=$} (m-1-3);
         \draw[draw opacity=0] (m-2-4) -- node[sloped] {$\neq$} (m-1-4);
       \end{tikzpicture}
     \end{center}
     Hence, unlike their classical counterparts over ground fields,
     reduced standard bases over ground rings are not unique up to
     multiplication with units. The key problem is that leading
     modules are not necessarily saturated with respect to the ground
     ring. This allowed the third basis element to have terms with
     monomials in $\LM_>(I)$, to which we could add a constant
     multiple of the second basis element without changing it being
     reduced.
   \end{example}

   Note also, that even if the base ring is a field and the ideal is
   generated by a polynomial, the reduced standard basis might contain
   power series. This is a well known fact when dealing with local
   orderings.

\ifx\langform\ja
   \begin{example}\label{ex:reductionInfiniteness}
     Consider the principal ideal generated by the element
     $g=x+y+tx\in \QQ\llbracket t\rrbracket [x,y]$ and the monomial
     ordering $>_w$ with weight vector $w=(-1,1,1)$ and $>$ the
     lexicographical ordering with $x>y>1>t$ as tiebreaker. Then
     $\{g\}$ is a standard basis and one can show that it converges to
     $g'=x+\sum_{i=0}^\infty (-1)^i\cdot t^iy$ in its reduction
     process.
     \begin{figure}[h]
       \centering
       \begin{tikzpicture}
         \matrix (m) [matrix of math nodes, row sep=2em, column sep=-0.5em, column 1/.style={anchor=base east}]
         { x+y+&\underline{tx} \\
           x+y-ty-&\underline{t^2x} \\
           x+y-ty+t^2y+&\underline{t^3x} \\
           & \vdots \\
           & \phantom{t^iy} \\ };
         \draw[->,shorten >=3pt,shorten <=3pt,font=\scriptsize] (m-1-2) -- node[right] {$-t\cdot g$} (m-2-2);
         \draw[->,shorten >=3pt,shorten <=3pt,font=\scriptsize] (m-2-2) -- node[right] {$+t^2\cdot g$} (m-3-2);
         \draw[->,shorten >=3pt,shorten <=3pt,font=\scriptsize] (m-3-2) -- (m-4-2);
         \draw[->,shorten >=3pt,shorten <=3pt,font=\scriptsize] (m-4-2) -- (m-5-2);
         \node[anchor=east] at (m-5-2.east) {$x+\sum_{i=0}^\infty (-1)^i\cdot t^iy$};
       \end{tikzpicture}
       \caption{reduction of $tx+t^2x+y$}
       \label{fig:exampleTotalReduction}
     \end{figure}

     Since the reduced standard basis is unique, this implies that $I$
     has no reduced standard basis consisting of polynomials, even
     though $I$ is generated by a polynomial itself. Consequently,
     this means that the reduced standard bases which play a central
     role in the established Gr\"obner fan theory are useless in our
     case from a practical perspective.
   \end{example}
\fi

   In
   \cite{MR15a} we will weaken the notion of reducedness, and we will
   show that this weakened version can be computed and is strong
   enough to compute Gr\"obner fans (see \cite{MR15a}) and tropical
   varieties (see \cite{MR15b}).

\newcommand{\etalchar}[1]{$^{#1}$}
\def\cprime{$'$}
\providecommand{\bysame}{\leavevmode\hbox to3em{\hrulefill}\thinspace}

\end{document}